\newtheorem{thm}{Theorem}[section]
\newtheorem{cor}[thm]{Corollary} 
\newtheorem{lem}[thm]{Lemma} 
\newtheorem{prop}[thm]{Proposition} 
\newtheorem*{claim}{Claim}
\theoremstyle{definition} 
\newtheorem{defn}[thm]{Definition}
\theoremstyle{remark}
\newenvironment{remark}
  {\pushQED{\qed}\remarkx}
  {\popQED\endremarkx}
\numberwithin{equation}{section}
\newcommand{\R}{\mathbb{R}}
\newcommand{\dist}{\operatorname{dist}}
\newcommand{\PV}{\mathrm{P.V.}}
\newcommand{\dd}{\mathop{}\!d} % differential 
\def\XXint#1#2#3{{\setbox0=\hbox{$#1{#2#3}{\int}$}
\vcenter{\hbox{$#2#3$}}\kern-.5\wd0}}
\newenvironment{PDE}
	{ \left \{
	\begin{array}{r@{ \ }l @{\quad \: \;} l}
	}
	{
	\end{array} \right . 
	}
\newcommand{\Anorm}[1]{\| #1 \|_{\mathscr{A}_s(\R^n)}}
\renewcommand{\epsilon}{\varepsilon}
\begin{document}

\title[On the Harnack inequality for antisymmetric \(s\)-harmonic functions]{On the Harnack inequality \\
for antisymmetric \(s\)-harmonic functions}

\author[S. Dipierro]{Serena Dipierro}
\address{Serena Dipierro: Department of Mathematics and Statistics,
The University of Western Australia, 35 Stirling Highway, Crawley, Perth, WA 6009, Australia}
\email{serena.dipierro@uwa.edu.au}

\author[J. Thompson]{Jack Thompson}
\address{Jack Thompson: Department of Mathematics and Statistics,
The University of Western Australia, 35 Stirling Highway, Crawley, Perth, WA 6009, Australia}
%\curraddr{}
\email{jack.thompson@research.uwa.edu.au}

\author[E. Valdinoci]{Enrico Valdinoci}
\address{Enrico Valdinoci: Department of Mathematics and Statistics, The University of Western Australia, 35 Stirling Highway, Crawley, Perth, WA 6009, Australia}
\email{enrico.valdinoci@uwa.edu.au}

\subjclass[2010]{Primary 35R11; 47G20; 35B50} 
% 35R11 - Fractional partial differential equations
% 47G20 - Integro-differential operators
% 35B50 - Maximum principles in context of PDEs
% 35N25 - Overdetermined boundary value problems for PDEs and systems of PDEs

%\thanks{The author is supported by an Australian Government Research Training Program Scholarship.}
\date{\today}

\dedicatory{}

\maketitle

\begin{abstract}
We prove the Harnack inequality for antisymmetric \(s\)-harmonic functions, and more generally for
solutions of fractional equations with zero-th order terms, in a general domain. This may be used in conjunction with the method of moving planes to obtain quantitative stability results for symmetry and overdetermined problems for semilinear equations driven by the fractional Laplacian.

The proof is split into two parts: an interior Harnack inequality away from the plane of symmetry, and a boundary Harnack inequality close to the plane of symmetry. We prove these results by first
establishing the weak Harnack inequality for super-solutions and local boundedness for sub-solutions in both the interior and boundary case.

En passant, we also obtain a new mean value formula for antisymmetric $s$-harmonic functions.
\end{abstract}

\section{Introduction and main results}
Since the groundbreaking work \cite{harnack1887basics}, Harnack-type inequalities have become an essential tool in the analysis of partial differential equations (\textsc{PDE}). In \cite{MR1729395,MR3522349,MR3481178}, Harnack inequalities have been applied to antisymmetric functions---functions that are odd with respect to reflections across a given plane, for more detail see Section \ref{LenbLVkk}---in conjunction with the method of moving planes to obtain stability results for local overdetermined problems including \emph{Serrin's overdetermined problem} and the \emph{parallel surface problem}; see also \cite{MR3932952}.

In recent years, there has been consolidated interest in overdetermined problems driven by nonlocal operators, particularly the fractional Laplacian; however, standard Harnack inequalities for such operators are incompatible with antisymmetric functions since they require functions to be non-negative in all of \(\R^n\). In this paper, we will address this problem by proving a Harnack inequality for antisymmetric \(s\)-harmonic functions with zero-th order terms which only requires non-negativity in a halfspace. By allowing zero-th order terms this result is directly applicable to symmetry and overdetermined problems for semilinear equations driven by the fractional Laplacian.

\subsection{Background} Fundamentally the original Harnack inequality for elliptic \textsc{PDE} is a quantitative formulation of the strong maximum principle and directly gives, among other things, Liouville's theorem, the removable singularity theorem, compactness results, and Hölder estimates for weak and viscosity solutions, see~\cite{MR1814364,MR2597943}. This has led it to be extended to a wide variety of other settings. For local \textsc{PDE} these include:
linear parabolic \textsc{PDE}~\cite{MR2597943,MR1465184}; quasilinear and fully nonlinear
elliptic \textsc{PDE}~\cite{MR170096,MR226198,MR1351007}; quasilinear and fully nonlinear
parabolic \textsc{PDE}~\cite{MR244638,MR226168}; and in connection with curvature and geometric
flows such as Ricci flow on Riemannian manifolds~\cite{MR431040,MR834612,MR2251315}.
An extensive survey on the Harnack inequality for local \textsc{PDE} is~\cite{MR2291922}.

For equations arising from jump processes, colloquially known as nonlocal \textsc{PDE}, the first Harnack inequality is due to Bogdan~\cite{MR1438304} who proved the boundary Harnack inequality for the fractional Laplacian. The fractional Laplacian is the prototypical example of a nonlocal operator and is defined by \begin{align}
(-\Delta)^s u(x) &= c_{n,s} \PV \int_{\R^n} \frac{u(x)-u(y)}{\vert x - y \vert^{n+2s}} \dd y  \label{ZdAlT}
\end{align}
where~$s\in(0,1)$, \(c_{n,s}\) is a positive normalisation constant (see~\cite{MR3916700} for more details, particularly Proposition~5.6 there) and \(\PV\) refers to the Cauchy principle value. The result in~\cite{MR1438304} was only valid for Lipschitz domains, but this was later extended to any bounded domain in~\cite{MR1719233}.

Over the proceeding decade there were several papers proving Harnack inequalities for more general jump processes including~\cite{MR1918242,MR3271268}, see also~\cite{MR2365478}. For fully nonlinear nonlocal \textsc{PDE}, a Harnack inequality was established in~\cite{MR2494809}, see also~\cite{MR2831115}. More recently, in~\cite{MR4023466} the boundary Harnack inequality was proved for nonlocal \textsc{PDE} in non-divergence form.

As far as we are aware, the only nonlocal Harnack inequality for antisymmetric functions in the literature is in~\cite{ciraolo2021symmetry} where a boundary Harnack inequality was established for antisymmetric \(s\)-harmonic functions in a small ball centred at the origin. Our results generalise this to arbitrary symmetric domains and to equations with zero-th order terms. 

\subsection{Main results} \label{K4mMv} Let us now describe in detail our main results. First, we will introduce some useful notation. Given a point \(x=(x_1,\dots , x_n) \in \R^n\), we will write \(x=(x_1,x')\) with \(x'=(x_2,\dots , x_n)\) and we denote by \(x_\ast\) the reflection of \(x\) across the plane \(\{x_1=0\}\), that is, \(x_\ast = (-x_1,x')\). Then we call a function \(u:\R^n \to \R\) \emph{antisymmetric} if \begin{align*}
u(x_\ast) = -u(x) \qquad \text{for all } x\in \R^n. 
\end{align*} We will also denote by \(\R^n_+\) the halfspace \(\{x\in \R^n \text{ s.t. } x_1>0\}\) and, given \(A\subset \R^n\), we let \(A^+ := A \cap \R^n_+\). Moreover, we will frequently make use of the
functional space \(\mathscr A_s(\R^n)\) which we define to be the set of all antisymmetric functions \(u\in L^1_{\mathrm {loc}}(\R^n)\) such that \begin{align}
\Anorm{u}:= \int_{\R^n_+} \frac{x_1 \vert u(x) \vert}{1+ \vert x \vert^{n+2s+2}}\dd x \label{QhKBn}
\end{align} is finite. The role that the new space \(\mathscr A_s(\R^n)\) plays will be explained in more detail later in this section.

Our main result establishes the Harnack inequality\footnote{The arguments presented here are quite general and can be suitably extended to other integro-differential operators. For the specific case of the fractional Laplacian, the extension problem can provide alternative ways to prove some of the results presented here. We plan to treat this case extensively in a forthcoming paper, but in Appendix~\ref{APPEEXT:1} here
we present a proof of~\eqref{LA:PAKSM} specific for the fractional Laplacian and~$c:=0$
that relies on extension methods.}
in a general symmetric domain \(\Omega\).

\begin{thm} \thlabel{Hvmln}
Let \(\Omega \subset \R^n\) and \(\tilde \Omega \Subset \Omega\) be bounded domains that are symmetric with respect to \(\{x_1=0\}\), and let \(c\in L^\infty(\Omega^+)\). Suppose that \(u \in C^{2s+\alpha}(\Omega)\cap \mathscr{A}_s(\R^n) \) for some \(\alpha >0\) with \(2s+\alpha\) not an integer, \(u\) is non-negative in \(\R^n_+\), and satisfies \begin{align*}
(-\Delta)^s u +cu&=0 \qquad \text{in }\Omega^+.
\end{align*}

Then there exists \(C>0\) depending only on \(\Omega\), \(\tilde \Omega\), \(\| c\|_{L^\infty(\Omega^+)}\), \(n\), and \(s\) such that \begin{equation}\label{LA:PAKSM}
\sup_{\tilde \Omega^+} \frac{u(x)}{x_1}  \leqslant C \inf_{ \tilde \Omega^+} \frac{u(x)}{x_1} .
\end{equation}  Moreover, \(\inf_{\tilde \Omega^+} \frac{u(x)}{x_1}\) and \(\sup_{\tilde \Omega^+} \frac{u(x)}{x_1}\) are comparable to \(\Anorm{u}\).
\end{thm}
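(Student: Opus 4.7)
The plan is to deduce (\ref{LA:PAKSM}), together with the comparability to $\Anorm{u}$, from two one-sided estimates: a \emph{weak Harnack inequality} for antisymmetric non-negative supersolutions,
\[
\Anorm{u} \leq C \inf_{\tilde\Omega^+} \frac{u(x)}{x_1},
\]
and a \emph{local boundedness} estimate for antisymmetric subsolutions,
\[
\sup_{\tilde\Omega^+} \frac{u(x)}{x_1} \leq C \Anorm{u}.
\]
Combining them yields both the Harnack chain in (\ref{LA:PAKSM}) and the final comparability statement in one stroke. Because the geometry is fundamentally different near and away from the symmetry plane, each of these two estimates will be proved in an \emph{interior regime} (on compact subsets of $\Omega^+$ at positive distance from $\{x_1=0\}$) and a \emph{boundary regime} (allowing $x_1\to 0$), and then stitched together by a finite covering of $\tilde\Omega^+$ and a Harnack-chain argument.

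The workhorse for the interior regime is the following rewriting of the fractional Laplacian: splitting the principal value in (\ref{ZdAlT}) across $\{y_1=0\}$ and using the antisymmetry $u(y_\ast)=-u(y)$, for $x\in\R^n_+$ one obtains
\[
(-\Delta)^s u(x) = c_{n,s}\int_{\R^n_+} \bigl(u(x)-u(y)\bigr)\bigl(K(x-y)-K(x-y_\ast)\bigr)\,\dd y + V(x)\, u(x),
\]
with $K(z)=\vert z\vert^{-n-2s}$ and $V(x)=2c_{n,s}\int_{\R^n_+}K(x-y_\ast)\,\dd y>0$. Since $x_1,y_1>0$ forces $\vert x-y\vert\leq \vert x-y_\ast\vert$, the kernel $K(x-y)-K(x-y_\ast)$ is \emph{non-negative}, and by a direct mean-value computation it is comparable to $x_1 y_1\,\vert y\vert^{-n-2s-2}$ for $\vert y\vert\to\infty$ -- exactly the weight appearing in (\ref{QhKBn}). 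Hence $u|_{\R^n_+}\geq 0$ solves an integro-differential equation on $\R^n_+$ with a non-negative kernel of order $2s$ and bounded zero-th order coefficient $c+V$, and a De Giorgi--Moser--Nash iteration (controlling the nonlocal tails by $\Anorm{u}$) delivers the interior weak Harnack and local boundedness.

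For the boundary regime the plan is to argue by comparison with the antisymmetric barrier $x_1$, which satisfies $(-\Delta)^s x_1=0$ pointwise in $\R^n_+$. For the weak Harnack, a cut-off multiple $\eta(x)\,x_1$ normalised by $\Anorm{u}$ will be placed under $u$ by the maximum principle in a chain of half-balls accumulating on $\{x_1=0\}$; for local boundedness, a dual supersolution barrier caps $u/x_1$ from above. The zero-th order term $cu$ is absorbed by working on balls small enough that $\|c\|_{L^\infty(\Omega^+)}$ times the barrier is a lower-order perturbation, with the estimate then propagated across $\tilde\Omega^+$ by covering; this refines the small-ball boundary Harnack of \cite{ciraolo2021symmetry} to an arbitrary symmetric domain and to equations with zero-th order terms.

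The hard part will be the boundary regime: the barriers must vanish linearly at $\{x_1=0\}$, have $\mathscr{A}_s(\R^n)$-tails correctly calibrated to $\Anorm{u}$, and remain robust under both the perturbation by $c$ and the nonlocal interaction with the far field encoded by $K(x-y)-K(x-y_\ast)$. By contrast, the interior regime and the subsequent Harnack chaining from the two sets of one-sided estimates are comparatively routine, and the comparability of $\sup u/x_1$, $\inf u/x_1$, and $\Anorm{u}$ can be read off directly from the two displayed bounds once both regimes are in hand.
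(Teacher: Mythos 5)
Your global architecture coincides with the paper's (weak Harnack for supersolutions giving $\Anorm{u}\leq C\inf u/x_1$, local boundedness for subsolutions giving $\sup u/x_1\leq C\Anorm{u}$, each in an interior and a boundary regime, then a finite covering of $\tilde\Omega^+$), and combining the two one-sided estimates does indeed yield both \eqref{LA:PAKSM} and the comparability with $\Anorm{u}$. The interior kernel rewriting you describe, with $K(x-y)-K(x-y_\ast)\asymp x_1y_1|y|^{-n-2s-2}$ at infinity and the potential $V(x)=\frac{c_{1,s}}{s}x_1^{-2s}$ (bounded only away from $\{x_1=0\}$, consistent with your restriction), is exactly the computation behind \thref{mkG4iRYH}; running a De Giorgi--Moser--Nash iteration on it instead of the paper's touching-point barrier argument (\thref{MT9uf}, \thref{guDQ7}) is a heavier but plausible alternative route, though it would have to be developed from scratch for this non-translation-invariant halfspace kernel, and note that $(-\Delta)^s x_1=0$ must be interpreted through \thref{mkG4iRYH}, since $x_1\notin\mathscr L_s(\R^n)$ when $s\leq\frac12$.

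The genuine gap is in the boundary regime, precisely the part you defer. First, boundary local boundedness cannot be obtained by ``capping $u/x_1$ from above by a dual supersolution barrier'' via the maximum principle: a comparison argument needs pointwise control of $u$ on the complement of the comparison region, whereas the hypotheses only give $u\geq0$ in $\R^n_+$ and the integral bound $\Anorm{u}$, which does not prevent $u$ from being a priori huge on a small subset of $B_\rho^+$ or of $\partial B_\rho^+$. This is exactly why the paper proves \thref{EP5Elxbz} (and \thref{guDQ7}) by a growth-lemma argument in the spirit of Caffarelli--Silvestre: one touches $u$ from above with $\tau\,\zeta(x)(\rho-|x|)^{-n-2}$, truncates to the antisymmetric function $w=v^+$, applies the rescaled weak Harnack inequality (\thref{YLj1r}, \thref{g9foAd2c}) to $w$, and extracts a measure estimate for the superlevel set $U$ that forces $\tau\leq C$; no pure barrier/maximum-principle step replaces this. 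Second, in the boundary weak Harnack, when you slide $\tau\,\eta(x)x_1$ under $u$ the contact point may fall on $\{x_1=0\}$, where both functions vanish and the equation is not available; one must then pass to first-order information, which requires a statement like the paper's \thref{6fD34E6w} on $\lim_{h\to0}(-\Delta)^s v(he_1)/h$, together with the mollification step \thref{tZVUcYJl} to justify it for solutions that are merely $C^{2s+\alpha}\cap\mathscr A_s(\R^n)$. Your sketch addresses neither of these two points, so as written the boundary half of the argument does not close.
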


Here and throughout this document we use the convention that if \(\beta>0\) and \(\beta\) is not an integer then \(C^\beta (\Omega)\) denotes the Hölder space \(C^{k,\beta'}(\Omega)\) where \(k\) is the integer part of \(\beta\) and \(\beta'=\beta-k \in (0,1)\).

One can compare the Harnack inequality in Theorem~\ref{Hvmln} here
with previous results in the literature, such as Proposition~6.1 in~\cite{MR4308250},
which can be seen as a boundary Harnack inequality for antisymmetric functions,
in a rather general fractional elliptic setting. We point out that Theorem~\ref{Hvmln} here
holds true without any sign assumption on~$c$ (differently from Proposition~6.1 in~\cite{MR4308250}
in which a local sign assumption\footnote{The sign assumption on \(c\) in Proposition~6.1 of~\cite{MR4308250} is required since the result was for unbounded domains. On the other hand, our result is for bounded domains which is why this assumption is no longer necessary. } on~$c$ was taken), for all~$s\in(0,1)$ (while the analysis of Proposition~6.1 in~\cite{MR4308250}
was focused on the case~$s\in\left[\frac12,1\right)$), and in every dimension~$n$ (while Proposition~6.1 in~\cite{MR4308250} dealt with the case~$n=1$).

We would like to emphasise that, in contrast to the standard Harnack inequality for the fractional Laplacian, \thref{Hvmln} only assumes \(u\) to be non-negative in the halfspace \(\R^n_+\) which is a more natural assumption for antisymmetric functions. Also, note that the assumption that \(\tilde \Omega \Subset \Omega\) allows~\(\tilde \Omega^+\) to touch \(\{x_1=0\}\), but prevents \(\tilde \Omega^+\) from touching the portion of \(\partial\Omega\) that is not on \(\{x_1=0\}\); it is in this sense that we will sometimes refer to \thref{Hvmln} (and later \thref{C35ZH}) as a \emph{boundary} Harnack inequality for antisymmetric functions.

Interestingly, the quantity \(\Anorm{u}\) arises very naturally in the context of antisymmetric functions and plays the same role that \begin{align}
\| u\|_{\mathscr L_s (\R^n)} := \int_{\R^n} \frac{\vert u (x) \vert }{1+\vert x \vert^{n+2s}} \dd x \label{ACNAzPn8}
\end{align} plays in the non-antisymmetric nonlocal Harnack inequality, see for example \cite{MR4023466}. To our knowledge, \(\Anorm{\cdot}\) is new in the literature.

A technical aspect of \thref{Hvmln}, however, is that the fractional Laplacian is in general not defined if we simply have \(u\) is \(C^{2s+\alpha}\) and \(\Anorm{u} < +\infty\). This leads us to introduce the following new definition of the fractional Laplacian for functions in \(\mathscr A_s(\R^n)\) which we will use throughout the remainder of this paper. 

\begin{defn} \thlabel{mkG4iRYH}
Let \(x\in \R^n_+\) and suppose that \(u \in \mathscr A_s(\R^n)\) is \( C^{2s+\alpha}\) in a neighbourhood of \(x\) for some \(\alpha>0\) with \(2s+\alpha\) not an integer. The fractional Laplacian of \(u\) at \(x\), which we denote as usual by \((-\Delta)^su(x)\), is defined by \begin{align}
(-\Delta)^su(x) = c_{n,s} \lim_{\varepsilon\to 0^+} \int_{\R^n_+ \setminus B_\varepsilon(x)} \left(
\frac 1 {\vert x - y \vert^{n+2s}} - \frac 1 {\vert x_\ast- y \vert^{n+2s}}\right) \big(u(x)-u(y)\big)\dd y  + \frac {c_{1,s}} s  u(x)x_1^{-2s} \label{OopaQ0tu}
\end{align} where \(c_{n,s}\) is the constant from~\eqref{ZdAlT}.
\end{defn}

We will motivate \thref{mkG4iRYH} in Section \ref{LenbLVkk} as well as verify it is well-defined. We also prove in Section \ref{LenbLVkk} that if \(\| u\|_{\mathscr L_s(\R^n)}\) is finite, $u$ is \( C^{2s+\alpha}\) in a neighbourhood of \(x\), and antisymmetric then \thref{mkG4iRYH} agrees with~\eqref{ZdAlT}. 

See also~\cite{MR3453602, MR4108219, MR4030266, MR4308250} where maximum principles in the setting
of antisymmetric solutions of integro-differential equations have been established by exploiting
the antisymmetry of the functions as in~\eqref{OopaQ0tu}.

It is also worth mentioning that the requirement that \(u\) is antisymmetric in \thref{Hvmln} cannot be entirely removed. In Appendix \ref{j4njb}, we construct a sequence of functions that explicitly demonstrates this. Moreover, \thref{Hvmln} is also false if one only assumes \(u\geqslant0\) in \(\Omega^+\) as proven in \cite[Corollary 1.3]{RoleAntisym2022}.

To obtain the full statement of \thref{Hvmln} we divide the proof into two parts: an interior Harnack inequality and a boundary Harnack inequality close to \(\{x_1=0\}\). The interior Harnack inequality is given as follows.

\begin{thm} \thlabel{DYcYH}
Let \(\rho\in(0,1)\) and \(c\in L^\infty(B_\rho(e_1))\). Suppose that \(u \in C^{2s+\alpha}(B_\rho(e_1))\cap \mathscr{A}_s(\R^n) \) for some \(\alpha >0\) with \(2s+\alpha\) not an integer, \(u\) is non-negative in \(\R^n_+\), and satisfies \begin{align*}
(-\Delta)^s u +cu =0 \qquad\text{in } B_{\rho}(e_1).
\end{align*}

Then there exists \(C_\rho>0\) depending only on \(n\), \(s\), \(\| c \|_{L^\infty(B_\rho(e_1))}\), and \(\rho\) such that \begin{align*}
\sup_{B_{\rho/2}(e_1)} u \leqslant C_\rho \inf_{B_{\rho/2}(e_1)} u.
\end{align*} Moreover, both the quantities \(\sup_{B_{\rho/2}(e_1)} u \) and \(\inf_{B_{\rho/2}(e_1)} u \) are comparable to \(\Anorm{u}\).
\end{thm}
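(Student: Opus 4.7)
Since $\rho<1$, the ball $B_\rho(e_1)$ is compactly contained in the open half-space $\R^n_+$, with $x_1\in[1-\rho,1+\rho]$ on this ball. Using Definition~\ref{mkG4iRYH}, the equation $(-\Delta)^s u + cu = 0$ in $B_\rho(e_1)$ rewrites as
\begin{equation*}
\mathcal L u(x) := c_{n,s}\, \PV \int_{\R^n_+} K_+(x,y) \bigl(u(x)-u(y)\bigr)\, dy = -\tilde c(x)\, u(x),
\end{equation*}
where $K_+(x,y) := \vert x-y\vert^{-n-2s} - \vert x_\ast-y\vert^{-n-2s}$ is symmetric on $\R^n_+\times\R^n_+$ and $\tilde c(x) := c(x) + \frac{c_{1,s}}{s}\, x_1^{-2s}$ lies in $L^\infty(B_\rho(e_1))$, since $x_1$ is bounded away from $0$ there. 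The antisymmetry of $u$ has thus been entirely absorbed into the operator, and $u$ now appears as a non-negative function on the integration domain $\R^n_+$ satisfying an integro-differential equation with bounded zero-th order coefficient.

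I would next analyse the kernel $K_+$. For $x,y\in B_\rho(e_1)$, the quantity $\vert x_\ast-y\vert\geq 2(1-\rho)$ is bounded below, so $K_+(x,y) \asymp \vert x-y\vert^{-n-2s}$ locally, matching the standard fractional Laplacian kernel. For $y$ far from $x$ (say $\vert y\vert\geq 2$), a first order Taylor expansion along the segment from $x$ to $x_\ast$ produces the crucial cancellation
\begin{equation*}
\vert K_+(x,y)\vert \leq C\, \frac{x_1\, y_1}{(1+\vert y\vert)^{n+2s+2}},
\end{equation*}
which is exactly the weight in the definition~\eqref{QhKBn} of $\Anorm{\cdot}$. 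Consequently, the tail $\int_{\R^n_+\setminus B_{1/2}(x)} \vert K_+(x,y)\vert\, \vert u(y)\vert\, dy$ is controlled by $\Anorm{u}$, while the local part of $\mathcal L u$ behaves like the standard fractional Laplacian of a non-negative function in a ball.

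With this decomposition in hand, I would run the Moser--De Giorgi machinery for non-local Harnack inequalities on $\mathcal L$, in two steps as outlined in the abstract. First, a weak Harnack inequality for supersolutions: for some small $\epsilon>0$,
\begin{equation*}
\biggl(\,\dashint_{B_{3\rho/4}(e_1)} u^\epsilon\,\biggr)^{1/\epsilon} \leq C\, \inf_{B_{\rho/2}(e_1)} u + C\, \Anorm{u}.
\end{equation*}
Second, local boundedness for subsolutions:
\begin{equation*}
\sup_{B_{\rho/2}(e_1)} u \leq C \biggl(\,\dashint_{B_{3\rho/4}(e_1)} u^\epsilon\,\biggr)^{1/\epsilon} + C\, \Anorm{u}.
\end{equation*}
Since $u$ is simultaneously a sub- and super-solution of $\mathcal L u = -\tilde c u$, chaining the two estimates yields $\sup_{B_{\rho/2}(e_1)} u \leq C\, \inf_{B_{\rho/2}(e_1)} u + C\, \Anorm{u}$.

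Finally, I would close the proof by establishing the comparability of $\sup_{B_{\rho/2}(e_1)} u$ and $\inf_{B_{\rho/2}(e_1)} u$ with $\Anorm{u}$. The bound $\Anorm{u} \leq C\, \inf_{B_{\rho/2}(e_1)} u$ should follow from a Poisson-kernel representation of $u$ built from $K_+$, since the far-field values of $u$ are recovered from the near-field values via the integral equation, and the antisymmetric kernel decay contributes exactly the $\mathscr A_s$-weight; the reverse bound $\sup_{B_{\rho/2}(e_1)} u \leq C\, \Anorm{u}$ then follows by absorbing the tail term in the local boundedness estimate. The main obstacle is the weak Harnack inequality: carrying out the Moser iteration (or a Krylov--Safonov covering argument) with the kernel $K_+$ requires carefully tracking tail corrections at each step with the new norm $\Anorm{\cdot}$ in place of the usual $\|\cdot\|_{\mathscr L_s(\R^n)}$, and adapting non-local logarithmic estimates to a kernel that is not translation invariant on $\R^n_+$.
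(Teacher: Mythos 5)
Your reduction is sound and matches the paper's starting point: rewriting the equation via \thref{mkG4iRYH} with the symmetric kernel $K_+(x,y)=|x-y|^{-n-2s}-|x_\ast-y|^{-n-2s}$, absorbing $\frac{c_{1,s}}{s}x_1^{-2s}$ into a bounded zero-th order coefficient on $B_\rho(e_1)$, and the two kernel bounds you state are precisely~\eqref{LxZU6}--\eqref{Zpwlcwex}. The split into a weak Harnack inequality for supersolutions and local boundedness for subsolutions is also the paper's structure (\thref{MT9uf} and \thref{guDQ7}), although the paper proves both by short contact-point barrier arguments in the spirit of \cite{MR4023466,MR2494809} rather than by Moser--De Giorgi iteration, which would require rebuilding Caccioppoli and logarithmic estimates for a kernel that is neither translation invariant nor non-degenerate up to $\{x_1=0\}$.

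The genuine gap is in how you close the argument. As you state them, both halves carry the tail $\Anorm{u}$ on the right-hand side, so chaining them only yields $\sup_{B_{\rho/2}(e_1)}u\leqslant C\inf_{B_{\rho/2}(e_1)}u+C\Anorm{u}$, and the whole theorem then hinges on the inequality $\Anorm{u}\leqslant C\inf_{B_{\rho/2}(e_1)}u$, which you defer to ``a Poisson-kernel representation of $u$ built from $K_+$''. No such representation is available here: the operator has the zero-th order term $\tilde c$ (present even when $c\equiv 0$, because of the $x_1^{-2s}$ contribution), and even for genuinely $s$-harmonic $u$ the ball Poisson formula requires $u\in\mathscr L_s(\R^n)$, whereas you only have $u\in\mathscr A_s(\R^n)$; this is why the paper relegates the Poisson-kernel route to Appendix~\ref{4CEly} under those extra hypotheses. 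The correct fix is to prove the weak Harnack inequality with the tail on the \emph{favourable} side, i.e.\ $\Anorm{u}\leqslant C(\inf_{B_{\rho/2}(e_1)}u+M)$, exactly as in \thref{MT9uf}: take the largest $\tau\geqslant0$ with $u\geqslant\tau\varphi$ in the half-space for a suitable antisymmetric cut-off $\varphi$, evaluate the equation at a contact point $a$, and use the kernel lower bound~\eqref{buKHzlE6}, which gives $K_+(a,y)\geqslant C_\rho\,y_1/(1+|y|^{n+2s+2})$ for all $y\in\R^n_+$, so that the full integral of $K_+(a,\cdot)\,u$ over $\R^n_+$ (which has a sign thanks to $u\geqslant0$ in $\R^n_+$ and $u(a)=\tau\varphi(a)$) directly produces $\Anorm{u}\leqslant C(\tau+M)\leqslant C(\inf u+M)$. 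With that version, combining with local boundedness gives $\sup u\leqslant C(\Anorm{u}+M)\leqslant C(\inf u+M)$ and the comparability with $\Anorm{u}$ simultaneously; without it, your scheme does not yield the stated theorem.
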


For all \(r>0\), we write \(B_r^+ := B_r \cap \R^n_+\). Then the following result is the antisymmetric boundary Harnack inequality.

\begin{thm} \thlabel{C35ZH}
Let \(\rho>0\) and \(c\in L^\infty(B_\rho^+)\). Suppose that \(u \in C^{2s+\alpha}(B_\rho)\cap \mathscr{A}_s(\R^n) \) for some \(\alpha >0\) with \(2s+\alpha\) not an integer, \(u\) is non-negative in \(\R^n_+\), and satisfies \begin{align*}
(-\Delta)^s u +cu&=0 \qquad \text{in } B_\rho^+.
\end{align*}

Then there exists \(C_\rho>0\) depending only on \(n\), \(s\), \(\| c \|_{L^\infty(B_\rho^+)}\), and \(\rho\) such that\begin{align*}
\sup_{x\in B_{\rho/2}^+} \frac{u(x)}{x_1} \leqslant C_\rho \inf_{x\in B_{\rho/2}^+} \frac{u(x)}{x_1}.
\end{align*}  Moreover, both the quantities \(\sup_{x\in B_{\rho/2}^+} \frac{u(x)}{x_1} \) and \(\inf_{x\in B_{\rho/2}^+} \frac{u(x)}{x_1} \) are comparable to \(\Anorm{u}\).
\end{thm}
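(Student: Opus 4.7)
The plan is to derive \thref{C35ZH} from two separate ingredients, both adapted to the antisymmetric setting and expressed through the ratio $u(x)/x_1$, and then to combine them in the classical De Giorgi--Moser manner: a weak Harnack inequality for non-negative antisymmetric supersolutions, followed by a local boundedness estimate for antisymmetric subsolutions.

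First, I would establish that if $u\in\mathscr A_s(\R^n)$ is antisymmetric, non-negative in $\R^n_+$, and satisfies $(-\Delta)^s u+cu\geq 0$ in $B_\rho^+$, then there exists $p_0>0$ such that
\begin{equation*}
\left(\dashint_{B_{r}^+}\left(\frac{u(x)}{x_1}\right)^{p_0}dx\right)^{1/p_0}\leq C\left(\inf_{B_{r/2}^+}\frac{u(x)}{x_1}+\Anorm{u}\right).
\end{equation*}
The natural approach is a Krylov--Safonov / ABP style argument on the halfspace, exploiting the symmetrised positive kernel $K(x,y)=c_{n,s}\bigl(|x-y|^{-n-2s}-|x_\ast-y|^{-n-2s}\bigr)$ that appears in \thref{mkG4iRYH}. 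The extra singular zero-th order term $c_{1,s}s^{-1}u(x)\,x_1^{-2s}$ is non-negative and can be either absorbed into the right-hand side of the supersolution inequality or treated as a standard potential in the Krylov--Safonov machinery. The function $x_1$, which is antisymmetric, linearly positive in $\R^n_+$ and lies in $\mathscr A_s(\R^n)$, is the natural reference profile: dividing by it trivialises the linear boundary degeneracy forced by antisymmetry and $C^{1}$ regularity, so that a dyadic measure-theoretic estimate on the level sets of $u/x_1$ (via a Calder\'on--Zygmund cube decomposition in the halfspace together with a barrier-based lower bound) produces the $L^{p_0}$ estimate.

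Next, I would prove a local boundedness estimate for antisymmetric subsolutions: under the reverse inequality $(-\Delta)^s u+cu\leq 0$ in $B_\rho^+$ and the same structural hypotheses, for every $p>0$,
\begin{equation*}
\sup_{B_{r/2}^+}\frac{u(x)}{x_1}\leq C\left(\left(\dashint_{B_{r}^+}\left(\frac{u(x)}{x_1}\right)^{p}dx\right)^{1/p}+\Anorm{u}\right).
\end{equation*}
The strategy is Moser iteration applied directly to $v=u/x_1$. One tests a weak formulation of the equation against powers $v^{q}$ multiplied by smooth antisymmetric cutoffs supported in $B_\rho$, derives a Caccioppoli-type energy inequality, and iterates on $q$. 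Here the singular term $x_1^{-2s}u$ in \thref{mkG4iRYH} plays a constructive role, producing exactly the coercive boundary contribution needed to close the iteration on balls touching $\{x_1=0\}$; the nonlocal tail contributions are controlled by $\Anorm{u}$, which is precisely the correct weighted tail norm for antisymmetric functions. Combining the weak Harnack and the local boundedness estimates then yields a Harnack inequality for $u/x_1$ on $B_{\rho/2}^+$, modulo the additive term $\Anorm{u}$. To upgrade this to the clean multiplicative form in \thref{C35ZH} and to obtain the comparability of the supremum and infimum of $u/x_1$ with $\Anorm{u}$, I would use a linear barrier/absorption argument together with \thref{DYcYH}: a Harnack chain inside $B_\rho^+$ propagates the interior estimate down to balls of arbitrarily small distance from $\{x_1=0\}$, and comparison with a suitable multiple of $x_1$ allows one to swallow the additive tail into a multiple of $\inf_{B_{\rho/2}^+}u/x_1$ and conversely to dominate $\sup_{B_{\rho/2}^+}u/x_1$ by a multiple of $\Anorm{u}$.

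The hardest step is expected to be the weak Harnack inequality, specifically the construction of a barrier giving a lower bound on level sets in the antisymmetric halfspace geometry: the standard barrier $(1-|x|^2)_+^s$ for the non-antisymmetric case must be replaced by one which is antisymmetric, vanishes like $x_1$ on the plane of symmetry, and has a controlled fractional Laplacian in the sense of \thref{mkG4iRYH}. Producing and quantifying such a barrier uniformly down to $\{x_1=0\}$, where the boundary weight $x_1$ degenerates, is the step where the singular term $u(x)\,x_1^{-2s}$ plays a genuine quantitative (rather than merely formal) role, and where the bulk of the technical work sits.
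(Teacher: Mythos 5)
Your plan is structured as a De Giorgi--Nash--Moser scheme (an $L^{p_0}$ weak Harnack estimate plus a Moser-type local boundedness estimate for $v=u/x_1$, with the tail $\Anorm{u}$ appearing additively on the right of both), whereas the paper argues throughout by elementary barrier/touching-point arguments and uses $\Anorm{u}$ itself as the pivot quantity: its boundary weak Harnack inequality states $\Anorm{u}\leqslant C\big(\inf_{B_{\rho/2}^+}u(x)/x_1+M\big)$ and its local boundedness states $\sup_{B_{\rho/2}^+}u(x)/x_1\leqslant C(\Anorm{u}+M)$, so the multiplicative Harnack inequality \emph{and} the stated comparability with $\Anorm{u}$ follow by concatenation, with no absorption step needed. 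This difference is not cosmetic: it is where your proposal has a genuine gap. Combining your two estimates only yields $\sup_{B_{\rho/2}^+}u/x_1\leqslant C\big(\inf_{B_{\rho/2}^+}u/x_1+\Anorm{u}\big)$, and nothing in your plan bounds $\Anorm{u}$ by the infimum. The route you indicate for the upgrade cannot work as described: Harnack chains built from \thref{DYcYH} have constants that blow up like a power of the distance to $\{x_1=0\}$, so they can never produce a bound involving $\inf_{B_{\rho/2}^+}u/x_1$ uniformly down to the symmetry plane (that uniformity is precisely the boundary estimate being proven); and the interior comparability in \thref{DYcYH} gives $\Anorm{u}\leqslant C\inf_{B'}u/x_1$ only for an interior ball $B'$, which is weaker than the needed bound by the infimum over the whole half-ball. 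Nor can the additive tail be absorbed by smallness, since the comparability constants are fixed, not small.

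The missing mechanism is exactly the content of the paper's Proposition labelled \thref{SwDzJu9i}: one slides the antisymmetric barrier $\varphi^{(2)}(x)=x_1\zeta(x)$ underneath $u$, and at the contact point evaluates the operator of \thref{mkG4iRYH}, using the kernel bound $|x-y|^{-n-2s}-|x_\ast-y|^{-n-2s}\geqslant C\,x_1y_1|x_\ast-y|^{-n-2s-2}$ to make the \emph{full} weighted tail $\Anorm{u}$ appear on one side of the inequality; a separate argument (the paper's Lemma \thref{6fD34E6w}, computing $\lim_{h\to0}(-\Delta)^s v(he_1)/h$) is required when the contact occurs on $\{x_1=0\}$, where $u-\tau\varphi^{(2)}$ vanishes to first order and the touching must be read at the level of $\partial_1$. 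Your phrase ``comparison with a suitable multiple of $x_1$'' points in this direction but supplies neither the tail-generation step nor the on-plane contact case, and these are the crux of the theorem. Secondary, but worth flagging: the Moser iteration step is asserted rather than substantiated --- the hypotheses here are pointwise ($u\in C^{2s+\alpha}(B_\rho)\cap\mathscr A_s(\R^n)$, no global energy), so you would first need to justify a weak formulation for the ground-state-transformed function $u/x_1$, whose operator carries the degenerate weight $x_1y_1$, together with the corresponding weighted Sobolev/Caccioppoli machinery and tail control at every scale of the iteration; this is plausibly doable but is far heavier than the paper's argument and is not established by the remark that the Hardy-type term $c_{1,s}s^{-1}x_1^{-2s}u$ ``is coercive''.
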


To our knowledge, Theorems~\ref{DYcYH} and~\ref{C35ZH} are new in the literature. In the particular case \(c\equiv0\), \thref{C35ZH} was proven in \cite{ciraolo2021symmetry}, but the proof relied on the Poisson representation formula for the Dirichlet problem in a ball which is otherwise unavailable in more general settings.

The proofs of Theorems~\ref{DYcYH} and~\ref{C35ZH} are each split into two halves: for \thref{DYcYH} we prove an interior weak Harnack inequality for super-solutions (\thref{MT9uf}) and interior local boundedness of sub-solutions (\thref{guDQ7}). Analogously, for \thref{C35ZH} we prove a boundary weak Harnack inequality for super-solutions (\thref{SwDzJu9i}) and boundary local boundedness of sub-solutions (\thref{EP5Elxbz}). The proofs of Propositions~\ref{MT9uf}, \ref{guDQ7}, \ref{SwDzJu9i}, and~\ref{EP5Elxbz} make use of general barrier methods which take inspiration from \cite{MR4023466,MR2494809,MR2831115}; however, these methods require adjustments which take into account the antisymmetry assumption. 

Once Theorems~\ref{DYcYH} and \ref{C35ZH} have been established, \thref{Hvmln} follows easily from a standard covering argument. For completeness, we have included this in Section \ref{lXIUl}. 

Finally, in Appendix~\ref{4CEly} we provide an alternate elementary proof of \thref{C35ZH} in the particular case \(c\equiv0\). As in \cite{ciraolo2021symmetry}, our proof relies critically on the Poisson representation formula in a ball for the fractional Laplacian, but the overall strategy of our proof is entirely different. This was necessary to show that \(\sup_{x\in B_{\rho/2}^+} \frac{u(x)}{x_1} \) and \(\inf_{x\in B_{\rho/2}^+} \frac{u(x)}{x_1} \) are comparable to \(\Anorm{u}\) which was not proven in \cite{ciraolo2021symmetry}. Our proof makes use of a new mean-value formula for antisymmetric \(s\)-harmonic functions, which we believe to be interesting
in and of itself. 

The usual mean value formula for \(s\)-harmonic functions says that if \(u\) is \(s\)-harmonic in \(B_1\) then we have that \begin{align}
u(0) =  \gamma_{n,s} \int_{\R^n\setminus B_r} \frac{r^{2s} u(y)}{(\vert y \vert^2-r^2)^s\vert y \vert^n}\dd y \qquad \text{for all }r\in(0, 1] \label{K8kw5rpi}
\end{align}where
\begin{equation}\label{sry6yagamma098765} \gamma_{n,s}:= \frac{\sin(\pi s)\Gamma (n/2)}{\pi^{\frac n 2 +1 }}.
\end{equation}

{F}rom antisymmetry, however, both the left-hand side and the right-hand side of~\eqref{K8kw5rpi} are zero irrespective of the fact \(u\) is \(s\)-harmonic. It is precisely this observation that leads to the consideration of \(\partial_1u(0)\) instead of \(u(0)\) which is more appropriate when \(u\) is antisymmetric.

\begin{prop} \thlabel{cqGgE}
Let \(u\in C^{2s+\alpha}(B_1) \cap \mathscr L_s(\R^n)\) with \(\alpha>0\) and \(2s+\alpha\) not an integer. Suppose that~\(u\) is antisymmetric and \(r\in(0, 1]\).  If \((-\Delta)^s u = 0 \) in \(B_1\) then  \begin{align*}
\frac{\partial u}{\partial x_1} (0) &=  2n \gamma_{n,s} \int_{\R^n_+ \setminus B_r^+} \frac{r^{2s}y_1 u(y)}{(\vert y \vert^2 - r^2)^s\vert y \vert^{n+2}} \dd y ,
\end{align*}
with~$\gamma_{n,s}$ given by~\eqref{sry6yagamma098765}.
\end{prop}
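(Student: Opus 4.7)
The plan is to derive the formula by differentiating the Poisson integral representation of $s$-harmonic functions in $B_r$ with respect to the base point, evaluated at the origin, and then using antisymmetry to fold the resulting integral over $\R^n$ onto $\R^n_+$.

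First, I would invoke the classical off-centre extension of the mean value formula~\eqref{K8kw5rpi}: since $u$ is $s$-harmonic in $B_1 \supseteq B_r$, the Poisson integral representation reads
\begin{equation*}
u(x) \;=\; \gamma_{n,s} \int_{\R^n \setminus B_r} \frac{(r^2 - |x|^2)^s \, u(y)}{(|y|^2 - r^2)^s \, |y-x|^n} \, dy, \qquad x \in B_r.
\end{equation*}
Setting $x = 0$ recovers~\eqref{K8kw5rpi}, and in the antisymmetric case both sides vanish identically there. To extract non-trivial information, I would differentiate this identity in $x_1$ and evaluate at $x = 0$. The factor $(r^2 - |x|^2)^s$ has vanishing $x_1$-derivative at the origin, while $\partial_{x_1} |y - x|^{-n}\big|_{x=0} = n\, y_1 |y|^{-n-2}$. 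Differentiation under the integral sign then yields
\begin{equation*}
\frac{\partial u}{\partial x_1}(0) \;=\; n \, \gamma_{n,s} \int_{\R^n \setminus B_r} \frac{r^{2s} \, y_1 \, u(y)}{(|y|^2 - r^2)^s \, |y|^{n+2}} \, dy.
\end{equation*}

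To conclude, I would split $\int_{\R^n \setminus B_r}$ into integrals over $\R^n_+ \setminus B_r^+$ and $\{y_1 < 0\} \setminus B_r$, and apply the change of variable $y \mapsto y_\ast$ to the second. Under this reflection, $y_1$ flips sign, $u(y)$ flips sign by antisymmetry (so that the product $y_1 u(y)$ is invariant), while $|y|$ and $|y|^2 - r^2$ are preserved. The two halfspace integrals thus coincide and combine to produce the factor of~$2$, delivering the formula in the statement.

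The main technical obstacle is the justification of differentiation under the integral sign, which I would handle by dominated convergence. Near $\{|y| = r\}$, the singularity $(|y|^2 - r^2)^{-s}$ is integrable since $s \in (0,1)$, and $u$ is bounded on a neighbourhood of $\overline{B_r}$ thanks to $u \in C^{2s+\alpha}(B_1)$ together with the interior smoothness of $s$-harmonic functions. At infinity, and for $x$ restricted to a small neighbourhood of the origin, a direct computation bounds the differentiated kernel by a constant multiple of $(1+|y|)^{-n-2s}$; consequently the integrand is dominated by a constant times $|u(y)|/(1+|y|)^{n+2s}$, which is summable in view of $u \in \mathscr L_s(\R^n)$ (cf.~\eqref{ACNAzPn8}).
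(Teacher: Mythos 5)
Your proposal is correct and follows essentially the same route as the paper: both start from the Poisson kernel representation in $B_r$ and extract $\partial_1 u(0)$ at the origin (the paper via the difference quotient $u(he_1)/h$ with $h\to0$ after first folding the formula onto $\R^n_+$ using antisymmetry, you via differentiation under the integral sign followed by the same folding), with the same dominated-convergence justification. The two orderings are interchangeable and the key kernel computation $\partial_{x_1}|y-x|^{-n}\big|_{x=0}=n\,y_1|y|^{-n-2}$ is exactly the Taylor expansion the paper uses.
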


\subsection{Organisation of paper}

The paper is organised as follows. In Section \ref{LenbLVkk}, we motivate \thref{mkG4iRYH} and establish some technical lemmata regarding this definition. In Section~\ref{q7hKF}, we
prove \thref{DYcYH} and, in Section~\ref{TZei6Wd4}, we prove \thref{C35ZH}. In Section~\ref{lXIUl}, we prove the main result \thref{Hvmln}. In Appendix~\ref{j4njb}, we construct a sequence that demonstrates the antisymmetric assumption in \thref{Hvmln} cannot be removed. Finally, in Appendix~\ref{4CEly}, we prove
Proposition~\ref{cqGgE}, and with this, we provide an elementary alternate proof of \thref{C35ZH} for the particular case~\(c\equiv0\).

\section{The antisymmetric fractional Laplacian} \label{LenbLVkk}

Let \(n\) be a positive integer and \(s\in (0,1)\). The purpose of this section is to motivate \thref{mkG4iRYH} as well as prove some technical aspects of the definition such as that it is well-defined and that it coincides with~\eqref{ZdAlT} when \(u\) is sufficiently regular. Recall that, given a point \(x=(x_1,\dots , x_n) \in \R^n\), we will write \(x=(x_1,x')\) with \(x'=(x_2,\dots , x_n)\) and we denote by \(x_\ast\) the reflection of \(x\) across the plane \(\{x_1=0\}\), that is, \(x_\ast = (-x_1,x')\). Then we call a function \(u:\R^n \to \R\) \emph{antisymmetric} if \begin{align*}
u(x_\ast) = -u(x) \qquad \text{for all } x\in \R^n. 
\end{align*} It is common in the literature, particularly when dealing with the method of moving planes, to define antisymmetry with respect to a given hyperplane \(T\); however, for our purposes, it is sufficient to take~\(T=\{x_1=0\}\).

For the fractional Laplacian of \(u\), given by~\eqref{ZdAlT}, to be well-defined in a pointwise sense, we need two ingredients: \(u\) needs enough regularity in a neighbourhood of \(x\) to overcome the singularity at \(x\) in the kernel \(\vert x-y\vert^{-n-2s}\), and \(u\) also needs an integrability condition at infinity to account for the integral in~\eqref{ZdAlT} being over \(\R^n\). For example, if \(u\) is \(C^{2s+\alpha}\) in a neighbourhood of \(x\) for some~\(\alpha >0\) with \(2s+\alpha\) not an integer and \(u\in\mathscr L_s(\R^n)\) where \(\mathscr L_s (\R^n)\) is the set of functions \(v\in L^1_{\mathrm{loc}}(\R^n)\) such that \(\| v\|_{\mathscr L_s(\R^n)}  <+\infty\) (recall \(\| \cdot \|_{\mathscr L_s(\R^n)}\) is given by~\eqref{ACNAzPn8}) then \((-\Delta)^su\) is well-defined at \(x\) and is in fact continuous there, \cite[Proposition 2.4]{MR2270163}. 

In the following proposition, we show that if \(u\in \mathscr L_s(\R^n)\) is antisymmetric and \(u\) is \(C^{2s+\alpha}\) in a neighbourhood of \(x\) for some \(\alpha >0\) with \(2s+\alpha\) not an integer then \(u\) satisfies~\eqref{OopaQ0tu}. This simultaneously motivates \thref{mkG4iRYH} and demonstrates this definition does generalise the definition of the fractional Laplacian when the given function is antisymmetric.

\begin{prop} \thlabel{eeBLRjcZ}
Let \(x\in \R^n_+\) and \(u \in \mathscr L_s(\R^n)\) be an antisymmetric function that is \(C^{2s+\alpha}\) in a neighbourhood of \(x\) for some \(\alpha>0\) with \(2s+\alpha\) not an integer. Then~\eqref{ZdAlT} and \thref{mkG4iRYH} coincide. 
\end{prop}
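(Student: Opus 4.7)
The plan is to start from the principal-value definition~\eqref{ZdAlT} of \((-\Delta)^s u(x)\) and rearrange the integrand using the antisymmetry of~\(u\) until it matches the right-hand side of~\eqref{OopaQ0tu}. The manipulation is essentially algebraic once the contribution from~\(\R^n_-\) is folded onto \(\R^n_+\) via reflection.

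Fix \(\varepsilon\in(0,x_1)\), so that \(B_\varepsilon(x)\subset \R^n_+\), and split the principal-value integral as one over \(\R^n_+ \setminus B_\varepsilon(x)\) plus one over \(\R^n_-\). In the \(\R^n_-\) piece I would perform the reflection change of variables \(y=z_\ast\) with \(z\in \R^n_+\), using the identities \(|x-z_\ast|=|x_\ast-z|\) (direct computation) and \(u(z_\ast)=-u(z)\) (antisymmetry), which convert it into \(\int_{\R^n_+}(u(x)+u(z))|x_\ast-z|^{-n-2s}\dd z\). Writing \(u(x)+u(z)=2u(x)-(u(x)-u(z))\) and absorbing the negative part into the \(\R^n_+\) integral produces the two structural pieces of~\eqref{OopaQ0tu}: the difference-kernel integral on \(\R^n_+ \setminus B_\varepsilon(x)\), and a remainder proportional to \(u(x)\int_{\R^n_+}|x_\ast-y|^{-n-2s}\dd y\). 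The latter integral I would compute explicitly by a translation and Fubini, reducing it to a one-dimensional \(\R^{n-1}\)-slice that evaluates via a Beta/Gamma identity; a direct comparison against the explicit expressions for \(c_{n,s}\) and \(c_{1,s}\) identifies the constant as \(\tfrac{c_{1,s}}{2s\,c_{n,s}}x_1^{-2s}\), which after multiplication by the overall prefactor \(c_{n,s}\) reproduces the boundary term \(\tfrac{c_{1,s}}{s}u(x)x_1^{-2s}\) of~\eqref{OopaQ0tu}.

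The main technical point --- really the only nontrivial step --- is justifying that the \(\varepsilon\to 0^+\) limit can be taken freely and that every rearranged integral converges absolutely away from \(y=x\). For \(y\) near~\(x\), the kernel \(|x_\ast-y|^{-n-2s}\) is bounded (since \(|x_\ast-y|\geq x_1>0\) whenever \(y\in \R^n_+\)), so the small-ball contribution from the piece involving only this kernel vanishes in the limit by local integrability of~\(u\); convergence of the principal value of the difference-kernel integral near \(y=x\) is then inherited directly from the \(C^{2s+\alpha}\) regularity of~\(u\), exactly as in the usual pointwise theory for~\eqref{ZdAlT}. At infinity, a first-order Taylor expansion gives \(|x-y|^{-n-2s}-|x_\ast-y|^{-n-2s}=O\bigl(x_1 y_1 |y|^{-n-2s-2}\bigr)\), which is precisely the weight appearing in \(\Anorm{\cdot}\); since an antisymmetric \(u\in \mathscr L_s(\R^n)\) automatically satisfies \(\Anorm{u}<\infty\) (by a direct comparison of the two tail weights), absolute integrability at infinity follows, completing the justification.
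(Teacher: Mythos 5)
Your proposal is correct and follows essentially the same route as the paper's proof: truncate at a small ball contained in \(\R^n_+\), fold the \(\R^n_-\) contribution onto \(\R^n_+\) via reflection and antisymmetry, split off the term \(2c_{n,s}u(x)\int_{\R^n_+}\vert x_\ast-y\vert^{-n-2s}\dd y\) (evaluated by scaling and the Gamma-function computation, as in \thref{tuMIrhco}), check that the leftover small-ball piece vanishes since \(\vert x_\ast-y\vert\geq x_1\), and pass to the limit by dominated convergence. The only cosmetic difference is that you control the tail through \(\Anorm{u}\leq C\|u\|_{\mathscr L_s(\R^n)}\), whereas the paper dominates directly with the \(\mathscr L_s\) weight; both are valid.
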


\begin{proof}
Let~$x=(x_1,x')\in \R^n_+$ and take~$\delta\in\left(0,\frac{x_1}2\right)$ such that~$u$
is~$C^{2s+\alpha}$ in~$B_\delta(x)\subset\R^n_+$. Furthermore, let \begin{align}
(-\Delta)^s_\delta u(x):= c_{n,s}\int_{\R^n  \setminus B_\delta(x)} \frac{u(x)-u(y)}{\vert x - y\vert^{n+2s}} \dd y .\label{WUuXb0Hk}
\end{align}
By the regularity assumptions on \(u\), the integral in~\eqref{ZdAlT} is well-defined and \begin{align*}
 \lim_{\delta\to0^+}(-\Delta)^s_\delta u(x) = c_{n,s} \PV \int_{\R^n}& \frac{u(x)-u(y)}{\vert x - y \vert^{n+2s}} \dd y =(-\Delta)^s u(x). 
\end{align*}

Splitting the integral in~\eqref{WUuXb0Hk} into two integrals over \(\R^n_+  \setminus B_\delta(x)\) and \(\R^n_-\) respectively and
then using that \(u\) is antisymmetric in the integral over \(\R^n_-\), we obtain
\begin{align}
(-\Delta)^s_\delta u(x) &= c_{n,s}\int_{\R^n_+ \setminus B_\delta(x)} \frac{u(x)-u(y)}{\vert x - y\vert^{n+2s}} \dd y +c_{n,s} \int_{\R^n_+} \frac{u(x)+u(y)}{\vert x_\ast - y\vert^{n+2s}} \dd y \nonumber \\
&= c_{n,s}\int_{\R^n_+ \setminus B_\delta(x)} \left(
\frac 1 {\vert x - y \vert^{n+2s}} - \frac 1 {\vert x_\ast- y \vert^{n+2s}} \right)\big(u(x)-u(y)\big) \dd y \nonumber \\
&\qquad +2c_{n,s}u(x) \int_{\R^n_+ \setminus B_\delta(x)} \frac{\dd y }{\vert x_\ast - y \vert^{n+2s}} + c_{n,s} \int_{B_\delta(x)} \frac{u(x)+u(y) }{\vert x_\ast - y \vert^{n+2s}}\dd y . \label{cGXQjDIV}
\end{align}

We point out that if~$y\in\R^n_+$ then~$|x_\ast-y|\ge  x_1$,
and therefore
\begin{eqnarray*}
&& \left \vert \int_{B_\delta(x)} \frac{u(x)+u(y) }{\vert x_\ast - y \vert^{n+2s}}\dd y \right \vert\le
\int_{B_\delta(x)} \frac{2|u(x)|+|u(y)-u(x)| }{\vert x_\ast - y \vert^{n+2s}}\dd y\\
&&\qquad\qquad \le \int_{B_\delta(x)} \frac{2|u(x)|+C|x-y|^{\min\{1,2s+\alpha\}} }{\vert x_\ast - y \vert^{n+2s}}\dd y\le  C\big (  \vert u(x) \vert + \delta^{\min\{1,2s+\alpha\}} \big ) \delta^n,
\end{eqnarray*}
for some~$C>0$ depending on~$n$, $s$, $x$, and the norm of~$u$ in a neighbourood of~$x$.

As a consequence, sending \(\delta \to 0^+\) in~\eqref{cGXQjDIV}
and using the Dominated Convergence Theorem,
we obtain \begin{align*}
(-\Delta)^s u(x) &= c_{n,s}\lim_{\delta \to 0^+}\int_{\R^n_+ \setminus B_\delta(x)} \left( \frac 1 {\vert x - y \vert^{n+2s}} - \frac 1 {\vert x_\ast- y \vert^{n+2s}} \right)\big(u(x)-u(y)\big) \dd y  \\
&\qquad +2c_{n,s}u(x) \int_{\R^n_+} \frac{\dd y }{\vert x_\ast - y \vert^{n+2s}}. 
\end{align*}

Also, making the change of variables \(z=(y_1/x_1 , (y'-x')/x_1)\) if \(n>1\) and \(z_1=y_1/x_1\) if \(n=1\), we see that \begin{align*}
 \int_{\R^n_+} \frac{\dd y }{\vert x_\ast - y \vert^{n+2s}}  &= x_1^{-2s}  \int_{\R^n_+}   \frac{\dd z}{\vert e_1+ z \vert^{n+2s}}. 
\end{align*}
Moreover, via a direct computation  \begin{equation}\label{sd0w3dewt95b76 498543qdetr57uy54u}
 c_{n,s}   \int_{\R^n_+}   \frac{\dd z}{\vert e_1+ z \vert^{n+2s}} = \frac {c_{1,s}}{2s} 
\end{equation} see \thref{tuMIrhco} below for details. 

Putting together these considerations, we obtain the desired result.
\end{proof}

\begin{remark} \thlabel{tuMIrhco}
Let \begin{align}
\tilde c_{n,s} := c_{n,s}   \int_{\R^n_+}   \frac{\dd z}{\vert e_1+ z \vert^{n+2s}}. \label{CA7GPGvx}
\end{align}The value of \(\tilde c_{n,s}\) is of no consequence to \thref{mkG4iRYH} and so, for this paper, is irrelevant; however, it is interesting to note that \(\tilde c_{n,s}\) can be explicitly evaluated and is, in fact, equal to \((2s)^{-1}c_{1,s}\) (in particular \(\tilde c_{n,s}\) is independent of \(n\) which is not obvious from~\eqref{CA7GPGvx}). Indeed, if \(n>1\) then \begin{align*}
 \int_{\R^n_+}   \frac{\dd z}{\vert e_1+ z \vert^{n+2s}} &= \int_0^\infty \int_{\R^{n-1}}   \frac{\dd z_1\dd z'}{\big ( (z_1+1)^2 + \vert z' \vert^2\big )^{\frac{n+2s}2}}
\end{align*} so, making the change of variables \(\tilde z'= (z_1+1)^{-1} z'\) in the inner integral, we have that \begin{align*}
 \int_{\R^n_+}   \frac{\dd z}{\vert e_1+ z \vert^{n+2s}} &=  \int_0^\infty   \frac 1 {(z_1+1)^{1+2s}} \left(  \int_{\R^{n-1}}   \frac{\dd \tilde z'}{\big ( 1 + \vert \tilde z' \vert^2\big )^{\frac{n+2s}2}} \right) \dd z_1.
\end{align*} By \cite[Proposition 4.1]{MR3916700}, \begin{align*}
\int_{\R^{n-1}}   \frac{\dd \tilde z'}{\big ( 1 + \vert \tilde z' \vert^2\big )^{\frac{n+2s}2}} &= \frac{\displaystyle
\pi^{\frac{n-1}2} \Gamma \big ( \frac{1+2s}2 \big )}{\displaystyle \Gamma \big ( \frac{n+2s}2 \big )}.
\end{align*} Hence, \begin{align*}
\int_{\R^n_+}   \frac{\dd z}{\vert e_1+ z \vert^{n+2s}} &= \frac{\pi^{\frac{n-1}2} \Gamma \big ( \frac{1+2s}2 \big )}{\Gamma \big ( \frac{n+2s}2 \big )} \int_0^\infty   \frac {\dd z_1} {(z_1+1)^{1+2s}} =  \frac{\pi^{\frac{n-1}2} \Gamma \big ( \frac{1+2s}2 \big )}{2s \Gamma \big ( \frac{n+2s}2 \big )}.
\end{align*} This formula remains valid if \(n=1\). Since \(c_{n,s} = s \pi^{-n/2}4^s \Gamma \big ( \frac{n+2s}2 \big )/ \Gamma (1-s)\), see \cite[Proposition~5.6]{MR3916700}, it follows that \begin{align*}
\tilde c_{n,s} &= \frac{2^{2s-1} \Gamma \big ( \frac{1+2s}2 \big )}{\pi^{1/2} \Gamma(1-s) } = \frac { c_{1,s}}{2s}.
\end{align*}
In particular, this proves formula~\eqref{sd0w3dewt95b76 498543qdetr57uy54u},
as desired.
\end{remark}

The key observation from \thref{eeBLRjcZ} is that the kernel \( {\vert x - y \vert^{-n-2s}} -  {\vert x_\ast- y \vert^{-n-2s}}\) decays quicker at infinity than the kernel \(\vert x-y \vert^{-n-2s}\) which is what allows us to relax the assumption~\(u\in \mathscr L_s (\R^n)\). Indeed, if~$x$, $y\in \R^n_+ $ then \(\vert x_\ast - y \vert \geqslant \vert x-y \vert\) and \begin{align}
 \frac 1 {\vert x - y \vert^{n+2s}} - \frac 1 {\vert x_\ast- y \vert^{n+2s}} &= \frac{n+2s} 2 \int_{\vert x - y \vert^2}^{\vert x_\ast -y \vert^2} t^{-\frac{n+2s+2}2} \dd t \nonumber \\
 &\leqslant \frac{n+2s} 2  \big ( \vert x_\ast -y \vert^2-\vert x -y \vert^2 \big ) \frac 1 {\vert x - y \vert^{n+2s+2}} \nonumber \\
 &= 2(n+2s) \frac{x_1y_1}{\vert x - y \vert^{n+2s+2}}. \label{LxZU6}
\end{align} Similarly, for all~$x$, $y\in \R^n_+$,\begin{align}
\frac 1 {\vert x - y \vert^{n+2s}} - \frac 1 {\vert x_\ast- y \vert^{n+2s}} &\geqslant 2(n+2s)  \frac{x_1y_1}{\vert x_\ast - y \vert^{n+2s+2}}. \label{buKHzlE6}
\end{align} Hence, if \(\vert x\vert <C_0\) and if \(|x-y|>C_1\) for some \(C_0,C_1>0\) independent of \(x,y\), we find that \begin{align}
\frac{C^{-1} x_1y_1}{1+\vert y \vert^{n+2s+2}} \leqslant \frac 1 {\vert x - y \vert^{n+2s}} - \frac 1 {\vert x_\ast- y \vert^{n+2s}} \leqslant \frac{Cx_1y_1}{1+\vert y \vert^{n+2s+2}} \label{Zpwlcwex}
\end{align} which motivates the consideration of \(\mathscr A_s(\R^n)\).

Our final lemma in this section proves that \thref{mkG4iRYH} is well-defined, that is, the assumptions of \thref{mkG4iRYH} are enough to guarantee \eqref{OopaQ0tu} converges.

\begin{lem} \thlabel{zNNKjlJJ}
Let \(x \in \R^n_+\) and \(r\in (0,x_1/2)\) be sufficiently small so that \(B:=B_r(x) \Subset\R^n_+\). If \(u \in C^{2s+\alpha}(B) \cap  \mathscr A_s(\R^n)\) for some \(\alpha>0\) with \(2s+\alpha\) not an integer then \((-\Delta)^su(x)\) given by \eqref{OopaQ0tu} is well-defined and there exists \(C>0\) depending on \(n\), \(s\), \(\alpha\), \(r\), and \(x_1\) such that \begin{align*}
\big \vert (-\Delta)^s u(x) \big  \vert &\leqslant C \big (  \| u\|_{C^{\beta}(B)} + \Anorm{u} \big ) 
\end{align*} where \(\beta = \min \{ 2s+\alpha ,2\}\).
\end{lem}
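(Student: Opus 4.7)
The plan is to decompose the integral in \eqref{OopaQ0tu} into a \emph{near} piece over $B := B_r(x)$ and a \emph{tail} piece over $\R^n_+ \setminus B$, and to bound each using, respectively, the local Hölder regularity of $u$ and the weighted norm $\Anorm{u}$. Since $r < x_1/2$, one has $|x_\ast - y| \geq 3x_1/2$ for every $y \in B$; consequently, on $B$ the reflected kernel $|x_\ast - y|^{-n-2s}$ is smooth and uniformly bounded by $(2/3)^{n+2s} x_1^{-n-2s}$, so the only genuine singularity appears in the term involving $|x-y|^{-n-2s}$.

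For the near piece I would separate
$$\int_{B \setminus B_\varepsilon(x)} \left(\frac{1}{|x-y|^{n+2s}} - \frac{1}{|x_\ast-y|^{n+2s}}\right)(u(x)-u(y)) \dd y$$
into the two summands corresponding to the two kernels. The summand with $|x_\ast-y|^{-n-2s}$ is absolutely convergent and bounded by $C x_1^{-n-2s} r^n \|u\|_{C^0(B)}$. For the principal value summand with $|x-y|^{-n-2s}$, the argument splits along the value of $2s+\alpha$: if $2s+\alpha<1$, I bound the integrand directly by $\|u\|_{C^{2s+\alpha}(B)}|x-y|^{\alpha-n}$, which is integrable; if $2s+\alpha \geq 1$, I would Taylor-expand $u$ to first order around $x$, observe that the linear term integrates to zero over the symmetric annulus $B\setminus B_\varepsilon(x)$ by oddness, and bound the remainder by $C\|u\|_{C^\beta(B)}|x-y|^\beta$ with $\beta = \min\{2s+\alpha,2\}$, producing an integrand of order $|x-y|^{\beta-n-2s}$ that is integrable since $\beta - 2s > 0$ for all $s \in (0,1)$ and $\alpha > 0$. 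In either case the principal value exists as $\varepsilon \to 0^+$ and is controlled by a constant depending on $n,s,\alpha,r,x_1$ times $\|u\|_{C^\beta(B)}$.

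For the tail piece I would invoke the kernel estimate \eqref{LxZU6}, together with the elementary inequality $|x-y|^{-n-2s-2} \leq C(r,x)(1+|y|)^{-n-2s-2}$ that follows from $|x-y| \geq r$ and $|x-y| \geq |y|/2$ for $|y|$ large compared to $|x|$, to obtain
$$0 \leq \frac{1}{|x-y|^{n+2s}} - \frac{1}{|x_\ast-y|^{n+2s}} \leq \frac{C x_1 y_1}{1+|y|^{n+2s+2}} \qquad \text{for every } y \in \R^n_+ \setminus B.$$
Splitting $|u(x)-u(y)| \leq |u(x)| + |u(y)|$ and recognising $\int_{\R^n_+} y_1|u(y)|/(1+|y|^{n+2s+2}) \dd y = \Anorm{u}$ yields the bound $C(|u(x)| + \Anorm{u})$ for the tail. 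The explicit pointwise term $c_{1,s} u(x) x_1^{-2s}/s$ is trivially bounded by $C\|u\|_{C^0(B)}$, and collecting all contributions gives the claimed inequality.

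I expect the principal value argument for the $|x-y|^{-n-2s}$ part of the near piece to be the most technical step, since one must handle the two regimes $2s+\alpha<1$ and $2s+\alpha\geq 1$ separately and rely on the cancellation of the linear Taylor term by oddness of the annulus. Nonetheless, this is a classical pattern in the theory of the fractional Laplacian, essentially as in \cite[Proposition~2.4]{MR2270163}; the antisymmetric setting only adds the smooth reflected kernel, which integrates cleanly on $B$, and makes the tail computation compatible with the new weighted norm $\Anorm{\cdot}$.
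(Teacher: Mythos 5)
Your proof is correct and follows essentially the same strategy as the paper: isolate the explicit term $c_{1,s}u(x)x_1^{-2s}/s$, treat the near part over $B$ using the local $C^\beta$ regularity (noting the reflected kernel is nonsingular there since $|x_\ast-y|\geqslant x_1$), and control the tail over $\R^n_+\setminus B$ via the kernel bound \eqref{LxZU6} together with $\Anorm{u}$. The only inessential difference is in the singular near part, where the paper reduces to the classical fractional Laplacian of the truncation $u\chi_B$ and invokes the standard second-difference estimate, whereas you estimate the principal value directly through a first-order Taylor expansion and the odd-symmetry cancellation on the annulus $B\setminus B_\varepsilon(x)$ --- the two devices are standard and interchangeable here.
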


We remark that the constant \(C\) in \thref{zNNKjlJJ} blows up as \(x_1\to 0^+\). 

\begin{proof}[Proof of Lemma~\ref{zNNKjlJJ}]
Write \( (-\Delta)^s u(x) = I_1 +I_2 + s^{-1} c_{1,s} u(x) x_1^{-2s}\) where  \begin{align*}
I_1 &:=  c_{n,s}\lim_{\varepsilon \to 0^+} \int_{B\setminus B_\varepsilon(x)} \left(
\frac 1 {\vert x - y \vert^{n+2s}} - \frac 1 {\vert x_\ast- y \vert^{n+2s}}\right) \big(u(x)-u(y)\big)\dd y\\
\text{ and } \qquad I_2 &:=  c_{n,s}\int_{\R^n_+\setminus B} \left(\frac 1 {\vert x - y \vert^{n+2s}} - \frac 1 {\vert x_\ast- y \vert^{n+2s}}\right) \left(u(x)-u(y)\right)\dd y.
\end{align*}

\emph{For \(I_1\):} If \(y\in \R^n_+\) then \({\vert x - y \vert^{-n-2s}} -  {\vert x_\ast- y \vert^{-n-2s}} \) is only singular at \(x\), so we may write \begin{align*}
I_1 &= c_{n,s}\lim_{\varepsilon \to 0^+} \int_{B\setminus B_\varepsilon(x)}\frac {u(x)-u(y)} {\vert x - y \vert^{n+2s}}\dd y - \int_B \frac {u(x)-u(y)} {\vert x_\ast- y \vert^{n+2s}}\dd y.
\end{align*}If \(\chi_B\) denotes the characteristic function of \(B\) then \(\bar  u := u\chi_B\in C^{2s+\alpha}(B)\cap L^\infty(\R^n)\), so \((-\Delta)^s\bar u(x)\) is defined and \begin{align*}
(-\Delta)^s\bar u(x) &= c_{n,s}\PV \int_{\R^n} \frac{u(x) - u(y)\chi_B(y)}{\vert x-y\vert^{n+2s}} \dd y \\
&= c_{n,s} \PV \int_B \frac{u(x) - u(y)}{\vert x-y\vert^{n+2s}} \dd y + c_{n,s} u(x)\int_{\R^n\setminus B} \frac{\dd y }{\vert x-y\vert^{n+2s}}\\
&=  c_{n,s}\PV \int_B \frac{u(x) - u(y)}{\vert x-y\vert^{n+2s}} \dd y +  \frac 1{2s}c_{n,s} n \vert B_1 \vert r^{-2s} u(x) .
\end{align*}Hence, \begin{align*}
I_1&= (-\Delta)^s\bar u(x) -\frac 1{2s}c_{n,s} n \vert B_1 \vert r^{-2s} u(x) -c_{n,s} \int_B \frac {u(x)-u(y)} {\vert x_\ast- y \vert^{n+2s}}\dd y 
\end{align*} which gives that \begin{align*}
\vert I_1 \vert &\leqslant  \big \vert (-\Delta)^s\bar u(x) \big \vert +C \| u\|_{L^\infty(B)} +2\| u\|_{L^\infty(B)} \int_B \frac 1 {\vert x_\ast- y \vert^{n+2s}}\dd y  \\
&\leqslant   \big \vert (-\Delta)^s\bar u(x) \big \vert  +C\| u\|_{L^\infty(B)}.
\end{align*}
Furthermore, if \(\alpha <2(1-s)\) then by a Taylor series expansion  \begin{align*}
\big \vert (-\Delta)^s\bar u (x) \big \vert &\leqslant C \int_{\R^n} \frac{\vert 2\bar u(x)-\bar u(x+y) - \bar u(x-y) \vert }{\vert y \vert^{n+2s}} \dd y \\
&\leqslant C [\bar u]_{C^{2s+\alpha}(B)}  \int_{B_1} \frac{\dd y  }{\vert y \vert^{n-\alpha}}  + C \| \bar u \|_{L^\infty(\R^n)} \int_{\R^n\setminus B_1} \frac{\dd y }{\vert y \vert^{n+2s}} \\
&\leqslant C  \big (  \| \bar u\|_{C^{2s+\alpha}(B)} + \| \bar u \|_{L^\infty(\R^n)} \big ) \\
&=C  \big ( \| u\|_{C^{2s+\alpha}(B)} + \| u \|_{L^\infty(B)} \big )  .
\end{align*} If \(\alpha \geqslant 2(1-s)\) then this computation holds with \([\bar u]_{C^{2s+\alpha}(B)} \) replaced with \(\|u\|_{C^2(B)}\).

\emph{For \(I_2\):} By~\eqref{LxZU6}, \begin{align*}
\vert I_2 \vert &\leqslant Cx_1\int_{\R^n_+\setminus B} \frac {y_1\big (\vert u(x) \vert + \vert u(y) \vert  \big ) } {\vert x - y\vert^{n+2s+2}} \dd y \\
&\leqslant C \| u \|_{L^\infty(B)}\int_{\R^n_+\setminus B} \frac {y_1} {\vert x - y\vert^{n+2s+2}} \dd y + C \Anorm{u} \\
&\leqslant C \big (\| u \|_{L^\infty(B)}+ \Anorm{u} \big ) .
\end{align*}

Combining the estimates for \(I_1\) and \(I_2\) immediately gives the result. 
\end{proof}

\section{Interior Harnack inequality and proof of Theorem~\ref{DYcYH}}\label{q7hKF}

The purpose of this section is to prove \thref{DYcYH}. Its proof is split into two parts: the interior weak Harnack inequality for super-solutions (\thref{MT9uf}) and interior local boundedness for sub-solutions (\thref{guDQ7}).

\subsection{The interior weak Harnack inequality}

The interior weak Harnack inequality for super-solutions is given in \thref{MT9uf} below. 

\begin{prop} \thlabel{MT9uf} 
Let \(M\in \R \), \(\rho \in(0,1)\), and \(c\in L^\infty(B_\rho(e_1))\). Suppose that \(u \in C^{2s+\alpha}(B_\rho(e_1))\cap \mathscr{A}_s(\R^n)\) for some \(\alpha>0\) with \(2s+\alpha\) not an integer, \(u\) is non-negative in \(\R^n_+\), and satisfies  \begin{align*}
(-\Delta)^su +cu &\geqslant -M \qquad \text{in } B_\rho(e_1).
\end{align*}

Then there exists \(C_\rho>0\) depending only on \(n\), \(s\), \(\| c \|_{L^\infty(B_\rho(e_1))}\), and \(\rho\) such that \begin{align*}
\Anorm{u} &\leqslant C_\rho \left( \inf_{B_{\rho/2}(e_1)} u + M \right) . 
\end{align*}
\end{prop}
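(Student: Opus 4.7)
The plan is to prove \thref{MT9uf} by evaluating the super-solution inequality at a near-minimum point and exploiting the lower bound \eqref{buKHzlE6}: the weight $y_1/(1+|y|^{n+2s+2})$ defining $\Anorm{\cdot}$ is precisely what one obtains by bounding $K(x_0,y) := |x_0-y|^{-n-2s} - |x_{0\ast}-y|^{-n-2s}$ from below on $\R^n_+$ when $x_0$ sits in $B_\rho(e_1)$. This converts the integral estimate coming from the equation into $\mathscr A_s$ control. The global strategy adapts the barrier method of \cite{MR4023466,MR2494809,MR2831115} to the antisymmetric setting using \thref{mkG4iRYH}.

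By homogeneity I may assume $\inf_{B_{\rho/2}(e_1)} u + M = 1$, and it suffices to show $\Anorm{u} \leqslant C_\rho$. By continuity of $u$, the infimum is attained at some $x_0 \in \overline{B_{\rho/2}(e_1)}$, with $x_{0,1} \geqslant 1 - \rho/2 \geqslant 1/2$ and $u(x_0) \leqslant 1$. Applying the hypothesis at $x_0$ and using \eqref{OopaQ0tu}, both $s^{-1} c_{1,s} u(x_0) x_{0,1}^{-2s}$ and $c(x_0) u(x_0)$ are bounded by a constant depending only on $\|c\|_{L^\infty(B_\rho(e_1))}$, $n$, and $s$. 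Therefore
\[
c_{n,s}\, \PV \int_{\R^n_+} K(x_0,y) \bigl(u(y)-u(x_0)\bigr)\, dy \leqslant C.
\]

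Assuming for the moment that the infimum is interior, one has $u(y) \geqslant u(x_0)$ on a small ball $B_r(x_0) \subset B_{\rho/2}(e_1)$, so the corresponding piece of the principal-value integral is non-negative and finite: the $C^{2s+\alpha}$ regularity of $u$ combined with $\nabla u(x_0) = 0$ tames the singularity of $K$ via the factor $u(y)-u(x_0)$. Discarding this non-negative contribution, the remaining integral over $\R^n_+ \setminus B_r(x_0)$ has no singularity; using $u \geqslant 0$ on $\R^n_+$ together with the bound $K(x_0,y) \geqslant c\, y_1(1+|y|)^{-n-2s-2}$ from \eqref{buKHzlE6} (valid for all $y \in \R^n_+$ since $x_{0,1} \gtrsim 1$ and $|x_{0\ast}| \lesssim 1$), and absorbing the bounded quantity $u(x_0) \int_{\R^n_+ \setminus B_r(x_0)} K(x_0,y)\, dy$ on the right, one deduces
\[
\int_{\R^n_+ \setminus B_r(x_0)} \frac{y_1 u(y)}{1+|y|^{n+2s+2}}\, dy \leqslant C,
\]
which is the tail part of $\Anorm{u}$.

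For the local part $\int_{B_r(x_0) \cap \R^n_+} y_1 u(y)(1+|y|^{n+2s+2})^{-1}\, dy \lesssim \|u\|_{L^1(B_r(x_0) \cap \R^n_+)}$, I control the $L^1$ norm by a barrier comparison: since $B_\rho(e_1) \Subset \R^n_+$, I compare $u$ with an explicit non-negative super-solution $\phi$ of $(-\Delta)^s \phi + c\phi = -1$ in $B_\rho(e_1)$ vanishing outside (built from the standard profile $(\rho^2 - |x-e_1|^2)^s_+$ together with the Green function of $B_\rho(e_1)$), yielding $\|u\|_{L^1(B_r(x_0) \cap \R^n_+)} \leqslant C_\rho (\inf_{B_{\rho/2}(e_1)} u + M)$ after a Krylov--Safonov-type covering. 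The main technical obstacle is the boundary case, in which the infimum is attained on $\partial B_{\rho/2}(e_1)$: here $\nabla u(x_0)$ need not vanish and the near-$x_0$ integral is no longer automatically non-negative, forcing one to slightly enlarge the ball and chain the argument, all while ensuring the final constant depends only on $\rho$, $n$, $s$, and $\|c\|_{L^\infty(B_\rho(e_1))}$.
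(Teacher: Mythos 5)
Your overall mechanism (evaluate the super-solution inequality at a distinguished point, use the sign of \(u\) on \(\R^n_+\) and the kernel lower bound \eqref{buKHzlE6} to extract \(\Anorm{u}\)) is indeed the engine of the paper's proof, but the paper does not evaluate at a minimum point of \(u\): it slides an antisymmetric bump \(\varphi^{(1)}\) (antisymmetric with respect to the shifted plane \(T_\rho\), Lemma \ref{rvZJdN88}) underneath \(u\) and works at the largest \(\tau\) with \(u\geqslant\tau\varphi^{(1)}\) in the halfspace. This choice is what makes the argument close: the touching point \(a\) is automatically interior (in \(B_{3/4}\), inside the support of the bump), the difference \(u-\tau\varphi^{(1)}\) is nonnegative on the \emph{whole} halfspace and vanishes at \(a\), so every contribution to the integral has a favourable sign --- including the part near \(a\) --- and the kernel lower bound then yields \(\Anorm{u}\leqslant C_\rho(\tau+M)\) with \(\tau\leqslant\inf_{B_{\rho/2}(e_1)}u\) for free. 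Your proposal, which touches \(u\) itself at \(x_0\in\overline{B_{\rho/2}(e_1)}\), loses exactly these features, and this creates two genuine gaps.

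First, the boundary case is not handled, and it cannot be dismissed by ``slightly enlarging the ball and chaining'': the minimum of \(u\) over \emph{any} compact subset of \(B_\rho(e_1)\) may sit on its boundary, and near such an \(x_0\) the quantity \(u(y)-u(x_0)\) changes sign at points arbitrarily close to \(x_0\), where the kernel \(K(x_0,\cdot)\sim|x_0-y|^{-n-2s}\) is not integrable; controlling that region then requires the modulus of continuity (indeed the \(C^1\) or \(C^{2s+\alpha}\) norm) of \(u\), which is not controlled by \(\inf u+M\), so the final constant would depend on \(u\). (Also note that enlarging the ball only helps because \(\inf\) over a larger ball is smaller; it does not remove the possibility of a boundary minimum.) Second, after discarding the near piece you still owe the local part of \(\Anorm{u}\), and the step \(\|u\|_{L^1(B_r(x_0)\cap\R^n_+)}\leqslant C_\rho(\inf_{B_{\rho/2}(e_1)}u+M)\) via ``a super-solution built from \((\rho^2-|x-e_1|^2)^s_+\), the Green function, and a Krylov--Safonov-type covering'' is not a proof: that \(L^1\)-to-\(\inf\) bound \emph{is} the weak Harnack inequality (it is precisely the content of Proposition \ref{YLj1r}), so as written the argument assumes a statement equivalent to the one being proved. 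Incidentally, at a genuinely interior minimum you would not need that detour at all: since \(u(y)-u(x_0)\geqslant0\) on \(B_r(x_0)\) you could keep (rather than discard) the near piece and apply the same kernel lower bound there, obtaining the local part of \(\Anorm{u}\) directly; but the boundary-minimum obstruction remains, and the clean way around it is the paper's barrier/touching device (or an equivalent one), not a minimum of \(u\) over a closed ball. A minor additional point: the normalisation \(\inf u+M=1\) needs \(\inf u+M>0\); since \(M\in\R\) is arbitrary, this should be justified (e.g.\ by an approximation \(M+\varepsilon\)) rather than invoked as plain homogeneity.
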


The proof of \thref{MT9uf} is a simple barrier argument that takes inspiration from \cite{MR4023466}. We will begin by proving the following proposition, \thref{YLj1r},  which may be viewed as a rescaled version of \thref{MT9uf}.  We will require \thref{YLj1r} in the second part of the section when we prove the interior local boundedness of sub-solutions (\thref{guDQ7}). 

\begin{prop} \thlabel{YLj1r} 
Let \(M\in \R \), \(\rho \in(0,1)\), and \(c\in L^\infty(B_\rho(e_1))\). Suppose that \(u \in C^{2s+\alpha}(B_\rho(e_1))\cap \mathscr{A}_s(\R^n)\) for some \(\alpha>0\) with \(2s+\alpha\) not an integer, \(u\) is non-negative in \(\R^n_+\), and satisfies  \begin{align*}
(-\Delta)^su +cu &\geqslant -M \qquad \text{in } B_\rho(e_1).
\end{align*}

Then there exists \(C>0\) depending only on \(n\), \(s\), and~\(\rho^{2s}\| c \|_{L^\infty(B_\rho(e_1))}\),
such that  \begin{align*}
 \frac 1 {\rho^n} \int_{B_{\rho/2}(e_1)} u(x) \dd x &\leqslant C \left( \inf_{B_{\rho/2}(e_1)} u + M \rho^{2s}\right) . 
\end{align*} Moreover, the constant \(C\) is of the form \begin{align}
C = C' \left(1+ \rho^{2s} \| c \|_{L^\infty(B_\rho(e_1))}\right) \label{CyJJQHrF}
\end{align} with \(C'>0\) depending only on \(n\) and \(s\). 
\end{prop}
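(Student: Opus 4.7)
The strategy is a barrier argument for the weak Harnack inequality in the spirit of~\cite{MR2494809, MR4023466}, adapted to the antisymmetric setting via the representation of~$(-\Delta)^s$ given by~\thref{mkG4iRYH}. Using this representation, the hypothesis~$(-\Delta)^s u + cu\geq -M$ on~$B_\rho(e_1)$ becomes an integro-differential inequality on~$\R^n_+$ with a kernel comparable to the standard fractional kernel (cf.~\eqref{Zpwlcwex}) and an additional zero-th order term~$s^{-1}c_{1,s}\,u\,x_1^{-2s}$. Since~$1-\rho\leq x_1\leq 1+\rho$ on~$B_\rho(e_1)$, this extra term is bounded and can be absorbed into an effective zero-th order coefficient of size~$\|c\|_{L^\infty}+C(n,s)$, which explains why the final constant in the estimate depends only on~$\rho^{2s}\|c\|_{L^\infty}$ and not separately on~$\rho$ and~$c$.

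The key construction is an antisymmetric barrier~$\phi \in C_c^\infty(\R^n)\cap\mathscr{A}_s(\R^n)$ with~$\phi\geq 0$ in~$\R^n_+$, $\phi\geq 1$ on~$B_{\rho/2}(e_1)$, $\supp\phi\subset B_\rho(e_1)\cup B_\rho(-e_1)$, and~$\|(-\Delta)^s\phi\|_{L^\infty(B_\rho(e_1))}\leq C\rho^{-2s}$ for some~$C=C(n,s)$. A natural choice is~$\phi(x):=\chi((x-e_1)/\rho) - \chi((x+e_1)/\rho)$ where~$\chi\in C_c^\infty(B_1(0))$ is a fixed non-negative bump with~$\chi\geq 1$ on~$B_{1/2}(0)$; the antisymmetry is then built in, and the standard scaling of the fractional Laplacian together with the decay estimate~\eqref{LxZU6} gives the claimed~$\rho^{-2s}$ bound.

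Setting~$m:=\inf_{B_{\rho/2}(e_1)}u\geq 0$, the proof is then completed by comparing~$u$ with a suitable linear combination of~$\phi$ and the data~$m$ and~$M\rho^{2s}$, and applying the nonlocal minimum principle. The antisymmetric framework of~\thref{mkG4iRYH} lets one extend the equation on~$B_\rho(e_1)$ to a sign-flipped version on~$B_\rho(-e_1)$ via the antisymmetry of~$u$, so that a single global comparison controls~$u$ in both halves, using the non-negativity of the product~$u\phi$ on all of~$\R^n$. Integrating the resulting pointwise estimate for~$u$ over~$B_{\rho/2}(e_1)$, where~$\phi$ is bounded above and below by dimensional constants, yields the desired~$L^1$-average bound.

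The main obstacle is obtaining the \emph{linear} dependence on~$\rho^{2s}\|c\|_{L^\infty}$ required by~\eqref{CyJJQHrF}, as opposed to the exponential dependence a naive Gronwall-type absorption of the~$cu$ term would yield. This is achieved by incorporating the coefficient~$c$ into the barrier construction itself, so that~$(-\Delta)^s\phi + c\phi$ is controlled by~$C\rho^{-2s}(1+\rho^{2s}\|c\|_{L^\infty})$ on~$B_\rho(e_1)$; the factor~$(1+\rho^{2s}\|c\|_{L^\infty})$ then propagates cleanly to the final constant. A secondary technical point is the rigorous justification of this comparison within the framework of~\thref{mkG4iRYH}, which relies on~\thref{zNNKjlJJ} to guarantee that all relevant integrals converge absolutely.
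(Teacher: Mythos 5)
Your overall strategy (rescale, antisymmetric barrier, comparison) matches the paper's approach, and your barrier $\phi(x)=\chi((x-e_1)/\rho)-\chi((x+e_1)/\rho)$ is essentially the one the paper uses in rescaled coordinates (there it is $\varphi^{(1)}(x)=\zeta(x)-\zeta(Q_\rho(x))$, antisymmetric with respect to the translated plane $T_\rho=\{x_1=-1/\rho\}$). The scaling estimate $\|(-\Delta)^s\phi\|_{L^\infty(B_\rho(e_1))}\le C\rho^{-2s}$ is also correct. However, the core of the argument is missing from your plan.

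\textbf{The main gap.} You propose to ``compare $u$ with a suitable linear combination of $\phi$ and the data\ldots apply the nonlocal minimum principle\ldots integrate the resulting pointwise estimate for $u$.'' But a lower barrier comparison yields only an inequality of the form $u\ge\tau\phi$, which upon integration gives a \emph{lower} bound on $\int u$ and an \emph{upper} bound on $\tau$; neither delivers the required upper bound on the $L^1$-average of $u$ by $\inf u + M\rho^{2s}$. The ingredient that actually produces the $L^1$ bound is a \emph{touching-point argument}: take the largest $\tau\ge 0$ with $\tilde u\ge\tau\varphi^{(1)}$ on the halfspace, so $\tau\le\inf_{B_{1/2}}\tilde u$, and pick a contact point $a$ with $(\tilde u-\tau\varphi^{(1)})(a)=0$. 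Evaluating the antisymmetric representation of $(-\Delta)^s$ at $a$ --- where the zeroth-order term in \thref{mkG4iRYH} drops out because $(\tilde u-\tau\varphi^{(1)})(a)=0$ --- yields
\[
(-\Delta)^s(\tilde u-\tau\varphi^{(1)})(a)\le -C\int_{B_{1/2}}\big(\tilde u(y)-\tau\varphi^{(1)}(y)\big)\,\dd y,
\]
because the kernel is bounded below on $B_{1/2}$ when $a\in\overline{B_{3/4}}$ and $\tilde u-\tau\varphi^{(1)}\ge0$ on the relevant halfspace. Combining this with the supersolution lower bound on the same quantity and rearranging produces the $L^1$ estimate. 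This step is entirely absent from your plan: without it, the barrier argument cannot access the average of $u$. ``Integrating a pointwise estimate'' does not substitute for it.

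\textbf{Secondary point.} Your explanation of why the constant depends linearly on $\rho^{2s}\|c\|_{L^\infty}$ --- namely, ``incorporating the coefficient $c$ into the barrier construction itself'' --- does not reflect what actually happens, and a naive attempt to do so would risk spoiling the linear dependence. In the paper the barrier is independent of $c$. The clean dependence arises because at the touching point $a$ the term $\tilde c(a)(\tilde u-\tau\varphi^{(1)})(a)$ vanishes identically (since $(\tilde u-\tau\varphi^{(1)})(a)=0$), and the only place $c$ enters is through $(-\Delta)^s(\tau\varphi^{(1)})+\tilde c\,\tau\varphi^{(1)}\le\tau\big(C+\|\tilde c\|_\infty\big)=\tau C\big(1+\rho^{2s}\|c\|_\infty\big)$. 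This, again, depends on the touching-point structure you omitted.
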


Before we give the proof of \thref{YLj1r}, we introduce some notation.
Given~$ \rho \in(0,1)$, we define
$$  T_\rho:= \left\{x_1=-\frac1{\rho}\right\}\qquad{\mbox{and}}\qquad
H_\rho:=\left\{x_1>-\frac1{\rho}\right\}.$$
We also let~$Q_\rho$ be the reflection across the hyperplane~$T_\rho$, namely
$$Q_\rho(x):=x-2(x_1+1/\rho)e_1\qquad{\mbox{for all }} x\in \R^n. $$
With this, we establish the following lemma.

\begin{lem} \thlabel{rvZJdN88}
Let \(\rho \in(0,1)\) and \(\zeta\) be a smooth cut-off function such that \begin{align*}
\zeta \equiv 1 \text{ in } B_{1/2}, \quad \zeta \equiv 0 \text{ in } \R^n \setminus B_{3/4}, \quad{\mbox{ and }}
\quad
0\leqslant \zeta \leqslant 1.
\end{align*} Define \(\varphi^{(1)} \in C^\infty_0(\R^n)\) by \begin{align*}
\varphi^{(1)}(x) := \zeta (x) - \zeta \big (  Q_\rho(x) \big ) \qquad \text{for all } x\in \R^n.
\end{align*}

Then \(\varphi^{(1)}\) is antisymmetric with respect to \(T_\rho:= \{x_1=-1/\rho\}\) and there exists \(C>0\) depending only on \(n\) and \(s\) (but not on \(\rho\)) such that \(\| (-\Delta)^s \varphi^{(1)} \|_{L^\infty(B_{3/4})} \leqslant C\).
\end{lem}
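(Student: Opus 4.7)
The antisymmetry of $\varphi^{(1)}$ with respect to $T_\rho$ is immediate from the fact that $Q_\rho$ is an involution: unwinding the definition and using $Q_\rho(Q_\rho(x))=x$ gives $\varphi^{(1)}(Q_\rho(x)) = \zeta(Q_\rho(x))-\zeta(x) = -\varphi^{(1)}(x)$.

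The main idea for the $L^\infty$ bound is to reduce everything to a uniform estimate on $(-\Delta)^s\zeta$, which is entirely $\rho$-independent. Since both $\zeta$ and $\zeta\circ Q_\rho$ lie in $C^\infty_c(\R^n)$, the classical pointwise principal-value definition~\eqref{ZdAlT} of the fractional Laplacian applies to $\varphi^{(1)}$, and one may split
\[ (-\Delta)^s\varphi^{(1)}(x) = (-\Delta)^s\zeta(x) - (-\Delta)^s(\zeta\circ Q_\rho)(x). \]
The decisive ingredient is the isometry invariance of $(-\Delta)^s$: because $Q_\rho$ is a Euclidean reflection across $T_\rho$, the change of variables $y\mapsto Q_\rho(y)$ in the defining principal-value integral (together with $|Q_\rho(x)-Q_\rho(y)|=|x-y|$ and the fact that $Q_\rho$ sends $\R^n\setminus B_\varepsilon(x)$ to $\R^n\setminus B_\varepsilon(Q_\rho(x))$) yields the identity $(-\Delta)^s(\zeta\circ Q_\rho)(x) = \big((-\Delta)^s\zeta\big)(Q_\rho(x))$. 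Substituting back, $(-\Delta)^s\varphi^{(1)}(x) = (-\Delta)^s\zeta(x) - \big((-\Delta)^s\zeta\big)(Q_\rho(x))$ pointwise in $\R^n$.

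It then remains to invoke the standard estimate $\|(-\Delta)^s\zeta\|_{L^\infty(\R^n)}\leqslant C(n,s)$, obtained by splitting the defining integral at scale $1$ around the evaluation point, bounding the near piece by a second-order Taylor expansion in terms of $\|D^2\zeta\|_{L^\infty}$ and the far piece by $\|\zeta\|_{L^\infty}$ together with the tail integral $\int_{|z|>1}|z|^{-n-2s}\,\dd z$. Since the cut-off $\zeta$ can be fixed once and for all depending only on $n$, the resulting constant depends only on $n$ and $s$, which is precisely the $\rho$-independence the lemma requires. I do not foresee any genuine obstacle; the isometry identity is the only step that merits a careful check, but it ultimately reduces to a routine substitution in the singular integral.
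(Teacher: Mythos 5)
Your proof is correct, and it handles the key term by a genuinely different (though closely related) mechanism than the paper. The paper also writes $(-\Delta)^s\varphi^{(1)}=(-\Delta)^s\zeta-(-\Delta)^s(\zeta\circ Q_\rho)$ and bounds $(-\Delta)^s\zeta$ on $B_{3/4}$ by the same standard $\|D^2\zeta\|_{L^\infty}+\|\zeta\|_{L^\infty}$ estimate you quote; but for the reflected piece it argues directly: since $\zeta\circ Q_\rho$ vanishes in $B_{3/4}$ and is supported in $B_{3/4}(-2e_1/\rho)$, for $x\in B_{3/4}$ the kernel is nonsingular and $|x-y|\geqslant 2(1/\rho-3/4)\geqslant 1/2$ (this is the one place where $\rho\in(0,1)$ is used), so $|(-\Delta)^s(\zeta\circ Q_\rho)(x)|\leqslant C$ by a crude kernel bound. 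You instead invoke the equivariance of $(-\Delta)^s$ under the reflection $Q_\rho$, which gives the exact identity $(-\Delta)^s(\zeta\circ Q_\rho)(x)=\big((-\Delta)^s\zeta\big)(Q_\rho(x))$ and hence $\|(-\Delta)^s\varphi^{(1)}\|_{L^\infty(\R^n)}\leqslant 2\|(-\Delta)^s\zeta\|_{L^\infty(\R^n)}$. Your route buys a slightly stronger conclusion (a bound on all of $\R^n$, not just $B_{3/4}$) and makes the $\rho$-independence transparent, since $\rho$ drops out of the estimate entirely and the restriction $\rho<1$ is never needed for this step; the paper's route is a self-contained kernel computation that avoids appealing to isometry invariance. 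The only cosmetic point to note is that the constant in either argument depends on the fixed cut-off $\zeta$ through $\|D^2\zeta\|_{L^\infty}$ and $\|\zeta\|_{L^\infty}$, which, as you say, is harmless since $\zeta$ is chosen once and for all.
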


\begin{proof}
Since \(Q_\rho(x)\) is the reflection of \(x\in \R^n\) across \(T_\rho\), we immediately obtain that \(\varphi^{(1)} \) is antisymmetric with respect to the plane \(T_\rho\). As \(0\leqslant \zeta \circ Q_\rho \leqslant 1\) in \(\R^n\) and \(\zeta \circ Q_\rho=0\) in \(B_{3/4}\), from~\eqref{ZdAlT} we have that \begin{align*}
\vert (-\Delta)^s (\zeta \circ Q_\rho)(x) \vert &=C \int_{B_{3/4}(-2e_1/\rho)} \frac{(\zeta \circ Q_\rho)(y)}{\vert x - y\vert^{n+2s}} \dd y \leqslant C \qquad \text{for all }x \in B_{3/4} %\label{GW0Wr}
\end{align*} using also
that \(\vert x- y\vert \geqslant 2(1/\rho -3/4) \geqslant 1/2\). Moreover, \begin{align*}
\|  (-\Delta)^s \zeta \|_{L^\infty(B_{3/4})} &\leqslant C ( \| D^2\zeta \|_{L^\infty(B_{3/4})} + \| \zeta \|_{L^\infty(\R^n)} ) \leqslant C, %\label{IT9Mu}
\end{align*} for example, see the computation on p.~9 of \cite{MR3469920}. Thus,
$$\| (-\Delta)^s \varphi^{(1)} \|_{L^\infty(B_{3/4})} \leqslant \| (-\Delta)^s \zeta\|_{L^\infty(B_{3/4})} + \| (-\Delta)^s (\zeta \circ Q_\rho)\|_{L^\infty(B_{3/4})} \leqslant C,$$
which completes the proof. 
\end{proof}

Now we give the proof of \thref{YLj1r}. 

\begin{proof}[Proof of \thref{YLj1r}]
Let \(\tilde u(x):=u (\rho x+e_1)\) and \(\tilde c(x) := \rho^{2s}c(\rho x+e_1)\). Observe that \((-\Delta)^s\tilde u+\tilde c \tilde u \geqslant - M\rho^{2s}\) in \(B_1\) and that \(\tilde u \) is antisymmetric with respect to \(T_\rho\).

Let \(\varphi^{(1)}\) be defined as in \thref{rvZJdN88} and suppose that\footnote{Note that such a \(\tau\) exists. Indeed, let \(U \subset \R^n\), \(u:U \to \R\) be a nonnegative function and let \(\varphi : U \to \R\) be such that there exists \(x_0\in U\) for which \(\varphi(x_0)>0\). Then define \begin{align*}
I := \{ \tau \geqslant 0 \text{ s.t. } u(x) \geqslant \tau \varphi(x) \text{ for all } x\in U \} .
\end{align*} It follows that \(I\) is non-empty since \(0\in I\), so \(\sup I\) exists, but may be \(+\infty\). Moreover, \(\sup I \leqslant u(x_0)/\varphi(x_0) < +\infty\), so \(\sup I\) is also finite.} \(\tau\geqslant 0\) is the largest possible value such that~\(\tilde u  \geqslant \tau \varphi^{(1)} \) in the half space~\(H_\rho\). Since \(\varphi^{(1)} =1\) in \(B_{1/2}\), we immediately obtain that \begin{align}
\tau \leqslant \inf_{B_{1/2}} \tilde u = \inf_{B_{\rho/2}(e_1)} u. \label{mzMEg}
\end{align}

Moreover, by continuity, there exists \(a \in \overline{B_{3/4}}\) such that \(\tilde u(a)=\tau\varphi^{(1)} (a)\). On one hand, using \thref{rvZJdN88}, we have that \begin{align}
(-\Delta)^s(\tilde u-\tau \varphi^{(1)})(a)+\tilde c (a) (\tilde u-\tau \varphi^{(1)})(a) &\geqslant -M\rho^{2s} - \tau \big(  C+ \|\tilde c\|_{L^\infty(B_1)}\big) \nonumber \\
&\geqslant -M\rho^{2s} - C\tau \big(  1+ \rho^{2s}\|c\|_{L^\infty(B_\rho(e_1))}\big) .\label{aDLwG3pX}
\end{align} On the other hand, since \(\tilde u-\tau\varphi^{(1)}\) is antisymmetric with respect to \(T_\rho\), \(\tilde u - \tau \varphi^{(1)} \geqslant 0\) in \(H_\rho\), and~\((\tilde u-\tau \varphi^{(1)})(a)=0\), it follows that \begin{align}
&(-\Delta)^s (\tilde u-\tau \varphi^{(1)})(a)+c_\rho(a)(\tilde u-\tau \varphi^{(1)})(a) \nonumber \\
&\leqslant - C \int_{B_{1/2}} \left(
\frac 1 {\vert a - y \vert^{n+2s}} - \frac 1 {\vert Q_\rho(a) - y \vert^{n+2s}} \right) \big(
\tilde u(y)-\tau \varphi^{(1)}(y)\big) \dd y. \label{wPcy8znV}
\end{align} For all \(y \in B_{1/2}\), we have that \(\vert a - y \vert \leqslant C\) and \(\vert Q_\rho(a) - y \vert \geqslant C\) (the assumption \(\rho <1\) allows to choose this \(C\) independent of \(\rho\)), so \begin{align}
(-\Delta)^s (\tilde u-\tau \varphi^{(1)})(a)+c_\rho(a)(\tilde u-\tau \varphi^{(1)})(a) &\leqslant - C \int_{B_{1/2}}  \big(\tilde u(y)-\tau \varphi^{(1)}(y)\big) \dd y \nonumber \\
&\leqslant -C \left( \frac 1 {\rho^n} \int_{B_{\rho/2}(e_1)}  u(y) \dd y - \tau \right). \label{ETn5BCO5}
\end{align} Rearranging \eqref{aDLwG3pX} and \eqref{ETn5BCO5} then using \eqref{mzMEg}, we obtain \begin{align*}
\frac 1 {\rho^n} \int_{B_{\rho/2}(e_1)}  u(y) \dd y &\leqslant C  \Big( \tau \big(  1+ \rho^{2s}\|c\|_{L^\infty(B_\rho(e_1))}\big)  + M\rho^{2s} \Big) \\
&\leqslant C \big(  1+ \rho^{2s}\|c\|_{L^\infty(B_\rho(e_1))}\big) \left(
\inf_{B_{\rho/2}(e_1)} u + M \rho^{2s}\right)
\end{align*} as required.
\end{proof}

A simple adaptation of the proof of \thref{YLj1r} leads to the proof of \thref{MT9uf} which we now give. 

\begin{proof}[Proof of \thref{MT9uf}]
Follow the proof of \thref{YLj1r} but instead of \eqref{wPcy8znV}, we write \begin{align*}
&(-\Delta)^s (\tilde u-\tau \varphi^{(1)})(a)+c_\rho(a)(\tilde u-\tau \varphi^{(1)})(a)\\
&\qquad= - C \int_{H_\rho} \left(
\frac 1 {\vert a - y \vert^{n+2s}} - \frac 1 {\vert Q_\rho(a) - y \vert^{n+2s}} \right) \big(
\tilde u(y)-\tau \varphi^{(1)}(y)\big) \dd y.
\end{align*} Then, for all \(x,y\in H_\rho\), \begin{align}
\frac 1 {\vert x - y \vert^{n+2s}} - \frac 1 {\vert Q_\rho(x) - y \vert^{n+2s}} &= \frac{n+2s} 2 \int_{\vert x-y\vert^2}^{\vert Q_\rho(x)-y\vert^2} t^{- \frac{n+2s+2}2} \dd t \label{gvKkSL1N}\\
&\geqslant C \Big (\vert Q_\rho(x)-y\vert^2-\vert x-y\vert^2 \Big ) \vert Q_\rho(x) - y \vert^{- (n+2s+2)}  \nonumber\\
&= C \frac{(x_1+1/\rho)(y_1+1/\rho)}{\vert Q_\rho(x) - y \vert^{n+2s+2}}, \nonumber
\end{align} so using that \(\tilde u  - \tau \varphi^{(1)} \geqslant 0\) in \(H_\rho\), we see that \begin{align*}
(-\Delta)^s (\tilde u-\tau \varphi^{(1)})(a)+c_\rho(a)(\tilde u-\tau \varphi^{(1)})(a) 
&\leqslant - C \int_{H_\rho} \frac{(y_1+1/\rho)(\tilde u(y)-\tau \varphi^{(1)}(y))  }{\vert Q_\rho(a) - y \vert^{n+2s+2}}\dd y \\
&\leqslant - C_\rho  \left( \int_{H_\rho} \frac{(y_1+1/\rho)\tilde u(y) }{\vert Q_\rho(a) - y \vert^{n+2s+2}}\dd y + \tau \right) .
\end{align*} Making the change of variables \(z=\rho y +e_1\), we have that \begin{align*}
\int_{H_\rho} \frac{(y_1+1/\rho)\tilde u(y) }{\vert Q_\rho(a) - y \vert^{n+2s+2}}\dd y  &= \rho^{-n-1} \int_{\R^n_+} \frac{z_1 u(z) }{\vert Q_\rho(a) - z/\rho +e_1/\rho \vert^{n+2s+2}}\dd z.
\end{align*} Thus, since \( \vert Q_\rho(a) - z/\rho +e_1/\rho \vert^{n+2s+2} \leqslant C_\rho (1+ \vert z \vert^{n+2s+2} ) \), we conclude that
\begin{align*}
\int_{H_\rho} \frac{(y_1+1/\rho)\tilde u(y) }{\vert Q_\rho(a) - y \vert^{n+2s+2}}\dd y &\geqslant C_\rho \int_{\R^n_+} \frac{z_1 u(z) }{1+ \vert z \vert^{n+2s+2}} \dd z=C_\rho \Anorm{u}.
\end{align*} As a consequence,
\begin{align}
(-\Delta)^s (\tilde u-\tau \varphi^{(1)})(a)+\tilde c(a)(\tilde u-\tau \varphi^{(1)})(a) &\leqslant- C_\rho \Anorm{u}+C_\rho \tau  . \label{Pzf2a}
\end{align} Rearranging~\eqref{aDLwG3pX} and~\eqref{Pzf2a} then using~\eqref{mzMEg} gives \begin{align*}
\Anorm{u} &\leqslant C_\rho ( \tau + M ) \leqslant C_\rho \left( \inf_{B_{\rho/2}(e_1)} u + M \right), 
\end{align*}
as desired.
\end{proof}

\subsection{Interior local boundedness}

The second part of the proof of \thref{DYcYH} is the interior local boundedness of sub-solutions given in \thref{guDQ7} below. 

\begin{prop} \thlabel{guDQ7}
Let \(M \geqslant 0\), \(\rho\in(0,1/2)\), and \(c\in L^\infty(B_\rho(e_1))\). Suppose that \(u \in C^{2s+\alpha}(B_\rho(e_1))\cap \mathscr A_s(\R^n)\) for some \(\alpha>0\) with \(2s+\alpha\) not an integer, and \(u\) satisfies   \begin{equation}\label{sow85bv984dert57nb5}
(-\Delta)^su +cu \leqslant M \qquad \text{in } B_\rho(e_1).
\end{equation} 

Then there exists \(C_\rho>0\) depending only on \(n\), \(s\), \(\| c \|_{L^\infty(B_\rho(e_1))}\), and \(\rho\) such that  \begin{align*}
\sup_{B_{\rho/2}(e_1)} u &\leqslant C_\rho ( \Anorm{u} +M  ) .
\end{align*} 
\end{prop}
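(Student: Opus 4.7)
The proof is modelled on the contact-point scheme of Proposition~\ref{YLj1r}, now deploying the antisymmetric bump $\varphi^{(1)}$ of Lemma~\ref{rvZJdN88} from \emph{above} rather than from \emph{below}. First I would perform the standard rescaling $\tilde u(x) := u(\rho x + e_1)$ and $\tilde c(x) := \rho^{2s} c(\rho x + e_1)$, so that $\tilde u \in \mathscr{A}_s(\R^n)$ is antisymmetric with respect to $T_\rho$ and satisfies $(-\Delta)^s \tilde u + \tilde c \tilde u \leq M\rho^{2s}$ in $B_1$. The task reduces to showing $\sup_{B_{1/2}} \tilde u \leq C (\Anorm{\tilde u} + M\rho^{2s})$ with a constant of the form \eqref{CyJJQHrF}.

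Next I would let $\tau \geq 0$ be the smallest constant such that $\tilde u \leq \tau \varphi^{(1)}$ throughout~$\overline{B_{3/4}}$. Since $\varphi^{(1)} \equiv 1$ on $B_{1/2}$, this immediately yields $\sup_{B_{1/2}} \tilde u \leq \tau$, so it suffices to estimate $\tau$. By minimality there is a contact point $a \in \overline{B_{3/4}}$ with $\tilde u(a) = \tau\varphi^{(1)}(a)$. Set $\tilde w := \tilde u - \tau \varphi^{(1)}$; then $\tilde w$ is antisymmetric with respect to $T_\rho$, satisfies $\tilde w \leq 0$ on $\overline{B_{3/4}}$, and vanishes at~$a$. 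From the sub-solution inequality, Lemma~\ref{rvZJdN88}, and $\tilde u(a) = \tau \varphi^{(1)}(a) \in [0, \tau]$,
\begin{equation*}
(-\Delta)^s \tilde w(a) \;\leq\; M\rho^{2s} + C\bigl(1 + \rho^{2s}\|c\|_{L^\infty}\bigr) \tau.
\end{equation*}

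Using the antisymmetric representation of Definition~\ref{mkG4iRYH} at~$a$ and $\tilde w(a)=0$, the left-hand side can be rewritten as $-c_{n,s}\int_{H_\rho} K(a,y)\,\tilde w(y)\,dy$ (with $K$ the antisymmetric kernel associated to~$T_\rho$). I would split this integral into the $B_{3/4}$-piece and the far field. On $B_{3/4}$ we have $\tilde w \leq 0$ and $K \geq 0$, so the contribution is non-negative, and the pieces where $\varphi^{(1)} = 1$ (i.e.~$B_{1/2}$) supply a strictly positive lower bound of order~$\tau$ after a short geometric bound for $K(a,\cdot)$ on $B_{1/2}$. On $H_\rho \setminus B_{3/4}$ we have $\varphi^{(1)} = 0$, so the integrand is just $K(a,y)\tilde u(y)$, and the tail estimate~\eqref{LxZU6} together with the definition of $\Anorm{\cdot}$ in~\eqref{QhKBn} yield
\begin{equation*}
\Bigl| c_{n,s}\int_{H_\rho \setminus B_{3/4}} K(a,y)\,\tilde u(y)\,dy \Bigr| \;\leq\; C\Anorm{\tilde u}.
\end{equation*}
Combining the lower and upper bounds on $(-\Delta)^s \tilde w(a)$ gives $c\tau \leq M\rho^{2s} + C\Anorm{\tilde u} + C\bigl(1 + \rho^{2s}\|c\|_{L^\infty}\bigr)\tau$; absorbing the $\tau$-linear error (using \thref{YLj1r} for the average of $\tilde u$ in $B_{1/2}$ if needed) yields $\tau \leq C_\rho(\Anorm{\tilde u} + M\rho^{2s})$ and hence the claim.

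\textbf{Main obstacle.} The delicate step is to convert the mere sign statement $\tilde w \leq 0$ on $B_{3/4}$ into a \emph{quantitative} lower bound on the $B_{3/4}$-piece of the kernel integral that is linear in~$\tau$ (and not just non-negative), so that the $\tau$-linear terms produced by Lemma~\ref{rvZJdN88} and the zeroth-order coefficient $\tilde c$ can be absorbed. This requires a short case analysis depending on whether $a \in B_{1/2}$ (where $\tilde u(a) = \tau$ and the positivity is immediate) or $a \in B_{3/4}\setminus B_{1/2}$ (where one exploits the uniform lower bound $K(a,y)\geq c_\rho > 0$ for $y \in B_{1/2}$ together with the defining inequality $\tilde u \leq \tau$ on $B_{1/2}$). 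A secondary, but more routine, difficulty is the dependence on $\|c\|_{L^\infty}$ in the final constant, which is handled by iterating the estimate on balls with $\rho^{2s}\|c\|_{L^\infty}$ small and then covering.
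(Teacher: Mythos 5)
Your approach does not work, and the paper's proof is structurally different for a reason. Three concrete failures. First, the touching constant need not exist: $\varphi^{(1)}$ vanishes on $\partial B_{3/4}$ (since $\zeta\equiv 0$ outside $B_{3/4}$), so the inequality $\tilde u\leqslant\tau\varphi^{(1)}$ on $\overline{B_{3/4}}$ forces $\tilde u\leqslant 0$ on $\partial B_{3/4}$, which is not assumed; no finite $\tau$ exists in general. This is why the paper touches $u$ from above with the \emph{blowing-up} barrier $\tau(\rho-\vert x-e_1\vert)^{-n-2}$ rather than a bounded bump. Second, your tail bound is false: at a contact point $a$ that may lie on or near $\partial B_{3/4}$, the kernel behaves like $\vert a-y\vert^{-n-2s}$ for $y$ just outside $B_{3/4}$, and $\Anorm{\tilde u}$ is only a weighted $L^1$ quantity, so $\int_{H_\rho\setminus B_{3/4}}K(a,y)\vert\tilde u(y)\vert\,dy$ cannot be bounded by $C\Anorm{\tilde u}$. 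Estimate~\eqref{LxZU6} combined with~\eqref{QhKBn} requires a definite gap between the evaluation point and the integration region, which is exactly what \thref{ltKO2} provides in the paper and what your decomposition lacks. Third, the step you flag as the ``main obstacle'' is not an obstacle but an impossibility: from $\tilde w\leqslant 0$ on $B_{3/4}$ you get only that the near-field contribution is non-negative. If $\tilde u\equiv\tau$ on $B_{1/2}$ (perfectly consistent with all your constraints), that contribution is exactly zero, so there is no lower bound of order $\tau$ with which to absorb the $O(\tau)$ error coming from $\Vert(-\Delta)^s\varphi^{(1)}\Vert_{L^\infty}$ — an order-one constant that cannot be made small by shrinking $\rho^{2s}\Vert c\Vert_{L^\infty}$.

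The underlying reason a pure barrier-touching argument cannot prove local boundedness of sub-solutions is that, unlike in the weak Harnack inequality, there is no global sign information: in \thref{YLj1r} the far field of $\tilde u-\tau\varphi^{(1)}$ is non-negative in all of $H_\rho$, whereas here $\tilde u-\tau\varphi^{(1)}\leqslant 0$ only where you imposed it. The paper instead runs a Caffarelli--Silvestre-type growth-lemma argument: from $\Anorm{u}\leqslant 1$ it deduces that the superlevel set $U=\{u>u(a)/2\}$ has measure at most $C d^{n+2}/\tau$ near the contact point, then applies the weak Harnack inequality (\thref{YLj1r}) to the antisymmetrised positive part of $(1-\theta/2)^{-n-2}u(a)\zeta-u$ to show that, on the contrary, most of a small ball around $a$ lies in $U$; comparing the two forces $\tau\leqslant C$. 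That measure-theoretic step is the essential missing idea in your proposal.
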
 

The proof of \thref{guDQ7} uses similar ideas to \cite[Theorem~11.1]{MR2494809} and \cite[Theorem~5.1]{MR2831115}. Before we prove \thref{guDQ7}, we need the following lemma. 

\begin{lem} \thlabel{ltKO2}
Let \(R\in(0,1)\) and \(a\in \overline{ B_2^+}\). Then there exists \(C>0\) depending only on \(n\) and \(s\) such that, for all \(x\in B_{R/2}^+(a)\) and \(y \in \R^n_+ \setminus B_R^+(a)\), \begin{align*}
\frac 1 {\vert x - y \vert^{n+2s}} - \frac 1 {\vert x_\ast - y \vert^{n+2s}} &\leqslant C R^{-n-2s-2} \frac{x_1y_1}{1+\vert y \vert^{n+2s+2}}.
\end{align*} 
\end{lem}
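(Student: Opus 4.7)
The plan is to start from the pointwise inequality~\eqref{LxZU6}, which holds whenever $x,y\in\R^n_+$ and already yields
\[
\frac{1}{|x-y|^{n+2s}} - \frac{1}{|x_\ast - y|^{n+2s}} \;\leqslant\; 2(n+2s)\,\frac{x_1 y_1}{|x-y|^{n+2s+2}}.
\]
With this in hand, proving the lemma reduces to establishing the purely geometric estimate
\[
|x-y|^{n+2s+2} \;\geqslant\; c\,R^{n+2s+2}\bigl(1+|y|^{n+2s+2}\bigr)
\]
for some $c>0$ depending only on $n$ and $s$, valid for every $x\in B_{R/2}^+(a)$ and $y\in\R^n_+\setminus B_R^+(a)$.

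To get this geometric bound I would combine two lower bounds on $|x-y|$. First, directly from the hypothesis $x\in B_{R/2}(a)$ and $y\notin B_R(a)$, the triangle inequality gives $|x-y|\geqslant R/2$. Second, because $a\in\overline{B_2^+}$ and $R<1$, we have $|x|\leqslant |a|+R/2\leqslant 5/2$, so whenever $|y|$ is large (say $|y|\geqslant 5$) we obtain $|x-y|\geqslant |y|-|x|\geqslant |y|/2$. I would then split into two cases: if $|y|\leqslant 5$ then $1+|y|^{n+2s+2}$ is bounded by a universal constant and the first inequality $|x-y|\geqslant R/2$ suffices; if $|y|>5$, then $1+|y|^{n+2s+2}\leqslant 2|y|^{n+2s+2}$, and the second inequality together with $R\leqslant 1$ closes the estimate.

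Chaining the two steps produces
\[
\frac{1}{|x-y|^{n+2s}} - \frac{1}{|x_\ast - y|^{n+2s}} \;\leqslant\; C\,\frac{x_1 y_1}{|x-y|^{n+2s+2}} \;\leqslant\; C\,R^{-(n+2s+2)}\,\frac{x_1 y_1}{1+|y|^{n+2s+2}},
\]
which is exactly the claimed inequality. There is no real obstacle here: the argument is essentially bookkeeping, and the only subtlety is to make sure the constants coming from the two regimes of $|y|$ depend only on $n$ and $s$ (which they do, since $a$ is confined to $\overline{B_2^+}$ and $R\in(0,1)$ provides uniform control on all geometric scales).
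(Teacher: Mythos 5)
Your proposal is correct and takes essentially the same route as the paper: both reduce via~\eqref{LxZU6} to the geometric estimate \(|x-y|^{n+2s+2}\geqslant c\,R^{n+2s+2}\bigl(1+|y|^{n+2s+2}\bigr)\), which is then obtained by triangle-inequality bounds (\(|x-y|\geqslant R/2\) near the ball, \(|x-y|\gtrsim |y|\) far away) with a case split on whether \(|y|\) is bounded; the paper merely packages this through the rescaling \(\tilde x=(x-a)/R\), \(\tilde y=(y-a)/R\) and the auxiliary bound \(|y-a|\geqslant CR|y|\). Your constants depend only on \(n\) and \(s\), as required.
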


\begin{proof} Let \(\tilde x := (x-a)/R\in B_{1/2}\) and \(\tilde y = (y-a)/ R\in \R^n\setminus B_1\). Clearly,
we have that~\(\vert \tilde y - \tilde x \vert \geqslant 1/2\). Moreover,  since \(\vert \tilde x \vert <1/2 < 1/(2 \vert \tilde y \vert)\), we have that \(\vert \tilde y - \tilde x \vert \geqslant \vert \tilde y \vert -\vert \tilde x \vert  \geqslant (1/2) \vert \tilde y \vert\). Hence, \begin{align*}
\vert \tilde y - \tilde x \vert^{n+2s+2} \geqslant \frac 1 {2^{n+2s+2}}  \max  \big \{ 1 , \vert \tilde y \vert^{n+2s+2} \big \} \geqslant C \big ( 1+  \vert \tilde y \vert^{n+2s+2} \big ) 
\end{align*} for some \(C\) depending only on \(n\) and \(s\). It follows that
\begin{align}
\vert x -y \vert^{n+2s+2} = R^{n+2s+2}  \vert \tilde x - \tilde y  \vert^{n+2s+2} 
\geqslant C R^{n+2s+2} \big ( 1 + R^{-n-2s-2} \vert y-a \vert^{n+2s+2} \big ). \label{TO9DRBuX}
\end{align}

Finally, we claim that there exists \(C\) independent of \(R\) such that \begin{align}
\vert y - a\vert \geqslant CR \vert y \vert \qquad \text{for all } y\in \R^n \setminus B_R(a) . \label{tdNyAxBN}
\end{align} Indeed, if \(y \in \R^n \setminus B_4\) then \begin{align*}
\vert y - a\vert \geqslant \vert y \vert -2 \geqslant \frac12 \vert y \vert>\frac R 2 \vert y \vert,
\end{align*} and if \(y \in (\R^n \setminus B_R(a) ) \cap B_4\) then \begin{align*}
\vert y -a\vert \geqslant R > \frac R 4 \vert y \vert
\end{align*} which proves \eqref{tdNyAxBN}.

Thus, \eqref{TO9DRBuX} and \eqref{tdNyAxBN} give that, for all \( x \in B_{R/2}(a)\) and \( y \in \R^n_+ \setminus B_R(a)\), \begin{align*}
\vert x -y \vert^{n+2s+2} &\geqslant C R^{n+2s+2} \big ( 1 +  \vert y\vert^{n+2s+2} \big ) .
\end{align*} Then the result follows directly from~\eqref{LxZU6}. 
\end{proof}

With this preliminary work, we now focus on the proof of \thref{guDQ7}.

\begin{proof}[Proof of \thref{guDQ7}]
We first observe that,
dividing through by~\( \Anorm{u} +M\), we may
assume that \((-\Delta)^su +cu\leqslant 1\) in \(B_\rho(e_1)\) and~\(\Anorm{u} \leqslant 1\). 

We also point out that if~$u\le0$ in~$B_\rho(e_1)$, then the claim in \thref{guDQ7}
is obviously true. Therefore, we can suppose that
\begin{equation}\label{upos44567890}
\{u>0\}\cap B_\rho(e_1)\ne \varnothing.\end{equation}
Thus, we let~\(\tau\geqslant 0 \) be the smallest possible value such that \begin{align*}
u(x) &\leqslant \tau (\rho-\vert x-e_1 \vert )^{-n-2} \qquad \text{for all } x\in B_\rho(e_1).
\end{align*} Such a \(\tau\) exists in light of~\eqref{upos44567890} by following a similar argument to the one in the footnote at the bottom of p. 11.

To complete the proof, we will show that \begin{align}
\tau &\leqslant C_\rho  \label{svLyO}
\end{align}with \(C_\rho\) depending only on \(n\), \(s\), \(\| c\|_{L^\infty(B_\rho(e_1))}\),
and \(\rho\) (but independent of \(u\)). Since \(u\) is uniformly
continuous in~$B_\rho(e_1)$, there exists \(a\in B_\rho(e_1)\) such that \(u(a) = \tau (\rho-\vert a-e_1\vert)^{-n-2}\).
Notice that~$u(a)>0$ and~\(\tau >0\). 

Let also~\(d=:\rho-\vert a-e_1\vert\) so that \begin{align}
u(a) = \tau d^{-n-2} ,\label{CEnkR}
\end{align}  and let  \begin{align*}
U:= \bigg \{ y \in B_\rho (e_1) \text{ s.t. } u(y) > \frac{u(a)} 2 \bigg \} .
\end{align*}  Since \(\Anorm{u}\leqslant 1\),  if \(r\in(0,d)\) then \begin{align*}
C_\rho &\geqslant  \int_{B_\rho (e_1)} |u(x)| \dd x
\ge \int_{ U\cap B_r (a)} u(x) \dd x \geqslant \frac{u(a)}2 \cdot \vert U \cap B_r (a) \vert  .
\end{align*} Thus, from~\eqref{CEnkR}, it follows that  \begin{align}
 \vert  U \cap B_r (a) \vert \leqslant \frac{C_\rho d^{n+2}}\tau \qquad \text{ for all } r\in(0,d). \label{W5Ar0}
\end{align}

Next, we make the following claim. 

\begin{claim} 
There exists~$\theta_0\in(0,1)$ depending only on \(n\), \(s\), \(\| c\|_{L^\infty(B_\rho(e_1))}\), and \(\rho\)
such that if~\(\theta\in(0,\theta_0]\) there exists~\(C>0\) depending only on \(n\), \(s\), \(\| c\|_{L^\infty(B_\rho(e_1))}\), \(\rho\), and~$\theta$ such that \begin{align*}
\big \vert B_{\theta d /8} (a)  \setminus U \big \vert \leqslant  \frac 1 4 \vert B_{\theta d /8} \vert +C \frac{d^n }{\tau} . 
\end{align*} In particular, neither \(\theta\) nor \(C\) depend on \(\tau\), \(u\), or \(a\).
\end{claim}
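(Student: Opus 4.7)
The plan is to establish the claim by adapting the barrier method of \cite{MR2494809,MR2831115} to the antisymmetric setting, exploiting \thref{mkG4iRYH} and \thref{ltKO2}. Note that, once the claim is proved, combining it with \eqref{W5Ar0} will immediately deliver \eqref{svLyO}: the two inequalities $|U\cap B_{\theta d/8}(a)| \geq \tfrac{3}{4}|B_{\theta d/8}| - Cd^n/\tau$ and $|U\cap B_{\theta d/8}(a)| \leq Cd^{n+2}/\tau$ together force $\theta^n \tau \leq C_\rho$, so that $\tau \leq C_\rho\theta_0^{-n}$ once $\theta_0$ has been fixed.

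I would begin with three preliminary observations. First, on $B_{\theta d/8}(a)\subset B_\rho(e_1)$ the bound $\rho-|x-e_1| \geq 7d/8$ holds, so the assumed weighted upper bound gives $u(x)\leq(8/7)^{n+2}u(a)$ there, where $u(a)=\tau d^{-n-2}$. Second, since $a_1\geq 1-\rho>\tfrac{1}{2}$, restricting $\theta_0$ small forces $|a_\ast-y| \geq a_1$ for all $y\in B_{\theta d/8}(a)$, so the antisymmetric kernel $K(a,y):= |a-y|^{-n-2s} - |a_\ast-y|^{-n-2s}$ satisfies $K(a,y) \geq \tfrac{1}{2}|a-y|^{-n-2s}$ on this ball. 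Third, the tail estimate in \thref{ltKO2} with $R=\theta d/8$ absorbs the far-field contributions into the normalised $\Anorm{u}\leq 1$.

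Next, I would construct an antisymmetric barrier of the form
$$\Gamma(x) := \lambda\big[\zeta((x-a)/r) - \zeta((x-a_\ast)/r)\big],$$
where $r=\theta d/8$ and $\zeta$ is a smooth radial cut-off as in \thref{rvZJdN88}; a direct computation patterned on \thref{rvZJdN88} would yield $\|(-\Delta)^s\Gamma\|_{L^\infty(B_{r/2}(a))}\leq C\lambda r^{-2s}$, with $\Gamma$ supported in $B_r(a)\cup B_r(a_\ast)$. Choosing $\lambda$ maximal so that $\Gamma\leq u$ in $\R^n_+$ would produce a contact point in $\overline{B_r(a)}$, at which I would evaluate the PDE via \thref{mkG4iRYH}, splitting the resulting integral into contributions from $B_r(a)\cap U^c$, $B_r(a)\cap U$, and $\R^n_+\setminus B_r(a)$. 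Using the sign $u(a)-u(y)\geq u(a)/2$ on $U^c$ together with the second observation, the $L^\infty$ comparability on $U$ from the first observation, and \thref{ltKO2} on the tail, I would then derive a measure bound of the shape $|B_r(a)\setminus U| \leq \alpha|B_r| + Cd^n/\tau$ with $\alpha$ depending only on the universal constants; picking $\theta_0$ small enough that $\alpha\leq\tfrac{1}{4}$ completes the argument.

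\emph{Main obstacle.} The hard part will be to carry out the quantitative balance at the scale $r=\theta d/8$: \thref{ltKO2} pays a factor $r^{-n-2s-2}$ for isolating the tail, which must be compensated by the size $u(a)=\tau d^{-n-2}$ and by the local contact-set contribution, all while tuning $\theta_0$ so that the resulting $\alpha$ falls below $\tfrac{1}{4}$. A secondary challenge will be to verify the sign estimates for $(-\Delta)^s\Gamma$ under the antisymmetric modification of \thref{rvZJdN88}: $\Gamma$ vanishes on $\{x_1=0\}$ and is negative on $B_r(a_\ast)$, but only the values of $(-\Delta)^s\Gamma$ on $B_{r/2}(a)$ enter the comparison, so it must be checked that the cancellation from antisymmetry does not destroy the sign needed for the barrier argument.
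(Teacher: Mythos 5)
Your strategy---sliding the antisymmetric bump $\Gamma(x)=\lambda\big[\zeta((x-a)/r)-\zeta((x-a_\ast)/r)\big]$ underneath $u$ and reading the equation at the contact point---has a genuine gap and cannot yield the claimed bound on $\vert B_{\theta d/8}(a)\setminus U\vert$. First, in \thref{guDQ7} the function $u$ is \emph{not} assumed non-negative in $\R^n_+$, so the maximal $\lambda\geqslant 0$ with $\Gamma\leqslant u$ in $\R^n_+$ may be $\lambda=0$, in which case there is no contact point and no information. Second, and more fundamentally, even when $u\geqslant0$ the contact-point inequality runs in the wrong direction: at the touching point $x_0\in\overline{B_{3r/4}(a)}$ one knows only $u(x_0)=\lambda\,\zeta((x_0-a)/r)\leqslant u(a)$, and combining the equation with $\|(-\Delta)^s\Gamma\|_{L^\infty}\leqslant C\lambda r^{-2s}$ and $u-\Gamma\geqslant0$ in $\R^n_+$ gives an estimate of the form $\int_{B_r(a)}(u-\Gamma)\,\dd y\leqslant C\big(r^{n+2s}+\lambda r^{n}+\dots\big)$, i.e.\ weak-Harnack-type control of $u$ by $\lambda\leqslant\min_{B_{3r/4}(a)}u/\zeta$. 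At best this bounds the measure of the set where $u$ is \emph{large} (an upper bound on $\vert U\cap B_r(a)\vert$), which is the opposite of the Claim. The sign you invoke, ``$u(a)-u(y)\geqslant u(a)/2$ on $U^c$'', lives at the point $a$, whereas your integrand at $x_0$ involves $(u-\Gamma)(y)$ with $u(x_0)$ possibly far below $u(a)$; and requiring $\lambda$ to be comparable to $u(a)$ from below is precisely the content of the Claim, so that route is circular. In short, your scheme never exploits the only available leverage, namely that $u$ attains the large value $u(a)=\tau d^{-n-2}$ at the specific point $a$.

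The way this information enters is by running the weak-Harnack mechanism on the auxiliary function $\big(1-\tfrac\theta2\big)^{-n-2}u(a)\,\zeta-u$ (take its positive part in $\R^n_+$ and antisymmetrise, so that it is a non-negative supersolution up to the tail error controlled by \thref{ltKO2}): its value at $a$ is the \emph{small} quantity $\big(\big(1-\tfrac\theta2\big)^{-n-2}-1\big)u(a)$, so the rescaled weak Harnack inequality \thref{YLj1r} bounds its average over $B_{\theta d/8}(a)$, while on $B_{\theta d/8}(a)\setminus U$ it is at least $\big(\big(1-\tfrac\theta2\big)^{-n-2}-\tfrac12\big)u(a)$; this is exactly how the paper proves the Claim. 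Note also that the $\theta$-smallness of the excess is essential to reach the factor $\tfrac14$: your first observation replaces it by the fixed constant $(8/7)^{n+2}$, which does not tend to $1$ as $\theta\to0$; on $B_{\theta d/8}(a)$ you should keep $u\leqslant(1-\theta/8)^{-n-2}u(a)$, so that the leading term of the final measure estimate carries a prefactor vanishing as $\theta\to0$ and can be made smaller than $\tfrac14\vert B_{\theta d/8}\vert$ by the choice of $\theta_0$.
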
 

We will withhold the proof of the claim until the end. Assuming the claim is true, we complete the proof of the \thref{guDQ7} as follows. By~\eqref{W5Ar0} (used here with~$r:={\theta_0 d}/8$)
and the claim, we have that \begin{align*}
\frac{C_\rho d^{n+2}}\tau  \geqslant \big \vert B_{\theta_0 d /8}  \big \vert -\big \vert B_{\theta_0 d /8} (a)  \setminus U \big \vert 
\geqslant \frac 3 4 \vert B_{\theta_0 d/8} \vert -C \frac{d^n }{\tau} .
\end{align*} Rearranging gives that \( \tau \leqslant C( d^2 +1 ) \leqslant C\), which proves \eqref{svLyO}. 

Accordingly, to complete the proof of \thref{guDQ7}, it remains to establish
 the Claim. For this, let \(\theta\in(0,1)\) be a small constant to be chosen later. We will prove the claim by applying \thref{YLj1r} to an appropriate auxiliary function. For \(x\in B_{\theta d/2} (a)\), we have that \(\vert x -e_1 \vert \leqslant \vert a -e_1 \vert + \theta d/2=\rho-(1-\theta/2)d \), so, using~\eqref{CEnkR}, \begin{align}
u(x) \leqslant \tau \bigg (1-\frac \theta2\bigg )^{-n-2} d^{-n-2} =  \bigg (1-\frac\theta 2\bigg )^{-n-2} u(a) \qquad \text{for all } x\in B_{\theta d/2}(a). \label{sS0kO}
\end{align}

Let \(\zeta \) be a smooth, antisymmetric function such that~$\zeta \equiv 1$ in~$\{x_1 > 1/2 \}$
and~$0\leqslant \zeta \leqslant 1$ in~$\R^n_+$, 
and consider the antisymmetric function \begin{align*}
v(x) &:=   \left(1-\frac\theta 2\right)^{-n-2} u(a) \zeta(x) -u (x) \qquad \text{for all } x\in \R^n. 
\end{align*} Since \(\zeta \equiv 1\) in \(\{x_1 > 1/2 \} \supset B_{\theta d/2}(a)\) and \(0\leqslant \zeta \leqslant 1\) in \(\R^n_+\), it follows easily from~\eqref{OopaQ0tu} that \((-\Delta)^s \zeta \geqslant 0\) in \(B_{\theta d/2}(a)\). Hence, in \(B_{\theta d/2}(a)\), \begin{align*}
(-\Delta)^s v +cv &\geqslant -(-\Delta)^s u-cu+  c\bigg (1-\frac\theta 2\bigg )^{-n-2} u(a)  \\
&\geqslant -1 -C\| c^-\|_{L^\infty(B_\rho(e_1))}\bigg (1-\frac\theta 2\bigg )^{-n-2}  u(a) . 
\end{align*} Taking \(\theta\) sufficiently small, we obtain \begin{align}
(-\Delta)^s v +cv &\geqslant -C (1 +  u(a) ) \qquad \text{in } B_{\theta d/2}(a). \label{7fUyX}
\end{align}

The function \(v\) is almost the auxiliary function to which we would like to apply \thref{YLj1r}; however, \thref{YLj1r} requires \(v \geqslant 0\) in \(\R^n_+\) but we only have \(v \geqslant 0\) in \(B_{\theta d/2}(a)\) due to \eqref{sS0kO}. To resolve this issue let us instead consider the function \(w\) such that \(w (x) = v^+(x)\) for all \(x\in \R^n_+\) and~\(w(x) = -w(x_\ast)\) for all \(x\in \overline{\R^n_-}\).
We point out that~$w$ coincides with~$v$ in~$B_{\theta d/2}(a)$, thanks to~\eqref{sS0kO},
and therefore it is as regular as~$v$ in~$B_{\theta d/2}(a)$, which allows us to write the fractional Laplacian of~$w$ in~$B_{\theta d/2}(a)$ in a pointwise sense.

Also we observe that we have~\(w\geqslant0\) in \(\R^n_+\) but we no longer have a lower bound for \((-\Delta)^sw+cw\). To obtain this, observe that for all \(x\in \R^n_+\), \begin{align*}
(w-v)(x) 
&= \begin{cases}
0, &\text{for all } x\in \{v>0\} \cap \R^n_+ \\
u(x)- (1-\theta /2 )^{-n-2} u(a) \zeta(x), &\text{for all } x\in \{v \leqslant 0\}\cap \R^n_+. 
\end{cases}
\end{align*} In particular, \(w-v\leqslant |u|\) in \(\R^n_+\). It follows that for all \(x\in B_{\theta d/2}(a)\),
\begin{align*}
(-\Delta)^s (w-v)(x) &\geqslant -C \int_{\R^n_+ \setminus B_{\theta d/2}(a) } \left( \frac 1 {\vert x-y\vert^{n+2s}} - \frac 1 {\vert x_\ast - y \vert^{n+2s}} \right)| u(y)|\dd y.
\end{align*} Moreover, by \thref{ltKO2}, for all~$x\in B_{\theta d/4}(a)$,\begin{align}
(-\Delta)^s (w-v)(x) &\geqslant -C (\theta d)^{-n-2s-2} \Anorm{u}
\geqslant -C (\theta d)^{-n-2s-2}  . \label{Fy8oddTC} 
\end{align}Thus, by~\eqref{7fUyX} and~\eqref{Fy8oddTC}, for all \(x\in  B_{\theta d/4}(a)\), we have that \begin{align}
(-\Delta)^sw(x)+c(x)w(x) &= (-\Delta )^s v (x)+c(x) v(x) + (-\Delta)^s (w-v)(x)
\nonumber \\ 
&\geqslant -C \big (1 +  u(a) +(\theta d)^{-n-2s-2} \big )  \nonumber \\
&\geqslant  -C \big ( (\theta d)^{-n-2s-2}+u(a)  \big ) \label{c1kcbe0C}
\end{align} using that \(\theta d<1\).

Next let us consider the rescaled and translated functions \(\tilde w(x) := w(a_1 x+(0,a'))\) and \(\tilde c (x) := a_1^{2s} c(a_1 x+(0,a'))\) (recall that \(a' = (a_2,\dots, a_n) \in \R^{n-1}\)). By~\eqref{c1kcbe0C} we have that
\begin{align*}
(-\Delta)^s \tilde w+\tilde c\,  \tilde w  \geqslant -Ca_1^{2s} \big ((\theta d)^{-n-2s-2} +  u(a) \big )  \qquad \text{in } B_{\theta d /(4a_1)}(e_1 ).
\end{align*}  On one hand, by \thref{YLj1r}, we obtain \begin{align*}
\left(\frac{\theta d } 8 \right)^{-n} \int_{B_{\theta d /8}(a )}  w( x) \dd x 
&= \left(\frac{\theta d }{8a_1} \right)^{-n} \int_{B_{\theta d/(8a_1)}(e_1 )} \tilde w ( x) \dd x \\
&\leqslant C \Big(  \tilde w(e_1) +  (\theta d)^{-n-2} +u(a) (\theta d )^{2s} \Big)\\
&= C  \left( \left(1-\frac\theta 2\right)^{-n-2} -1 \right)u(a)  +C (\theta d)^{-n-2} +u(a) (\theta d )^{2s} .
\end{align*}We note explicitly that, by~\eqref{CyJJQHrF}, the constant in the above line is given by \begin{align*}
C= C'\big ( 1 +(\theta d)^{2s} \| c\|_{L^\infty(B_{\rho}(e_1))} \big ),
\end{align*} so using that \(\theta d<1\) it may be chosen to depend only on \(n\), \(s\), and \( \| c\|_{L^\infty(B_{\rho}(e_1))}\). On the other hand, \begin{align*}
B_{\theta d /8} (a)  \setminus U  &\subseteq\left\{ w \geqslant \left( \left( 1- \frac \theta  2\right)^{-n-2} -\frac12 \right) u(a) \right\} \cap B_{\theta d /8} (a)  ,
\end{align*}so we have that \begin{align*}
(\theta d )^{-n} \int_{B_{\theta d/8}(a)} w(x) \dd x  &\geqslant (\theta d)^{-n} \bigg ( \bigg ( 1- \frac \theta  2\bigg )^{-n-2} -\frac12 \bigg ) u(a) \cdot  \big \vert B_{\theta d /8} (a)  \setminus U\big   \vert \\
&\geqslant  C (\theta d)^{-n} u(a) \cdot  \big \vert B_{\theta d /8} (a)  \setminus U\big   \vert 
\end{align*} for \(\theta\) sufficiently small. Thus, \begin{align*}
\big \vert B_{\theta d /8} (a)  \setminus U \big   \vert
&\leqslant C  (\theta d)^n \bigg ( \bigg (1-\frac\theta 2\bigg )^{-n-2} -1 \bigg )  +C(u(a))^{-1} (\theta d)^{-2} + (\theta d )^{n+2s} \\
&\leqslant C  (\theta d)^n \bigg ( \bigg (1-\frac\theta 2\bigg )^{-n-2} -1 +\theta^{2s}\bigg )  +C \frac{\theta^{-2} d^n }{\tau} 
\end{align*} using~\eqref{CEnkR} and that \(d^{n+2s}<d^n\) since \(d<1\). At this point we may choose \(\theta\) sufficiently small such that \begin{align*}
 (\theta d)^n \bigg ( \bigg (1-\frac\theta 2\bigg )^{-n-2} -1 +\theta^{2s}\bigg )  \leqslant \frac 1 4 \vert B_{\theta d/8} \vert .
\end{align*} This proves the claim, and thus completes the proof of
Proposition~\ref{guDQ7}.
\end{proof}

\section{Boundary Harnack inequality and proof of \thref{C35ZH}} \label{TZei6Wd4}

In this section, we give the proof of \thref{C35ZH}. Analogous to the proof of \thref{DYcYH}, the proof
of \thref{C35ZH} is divided into the boundary Harnack inequality for super-solutions (\thref{SwDzJu9i}) and 
the boundary local boundedness for sub-solutions (\thref{EP5Elxbz}). Together these two results immediately 
give \thref{C35ZH}. 

\subsection{The boundary weak Harnack inequality}

Our next result is the antisymmetric boundary weak Harnack inequality. 

\begin{prop} \thlabel{SwDzJu9i} 
Let \(M\in \R\), \(\rho>0\), and \(c\in L^\infty(B_\rho^+)\). Suppose that \(u\in C^{2s+\alpha}(B_\rho)\cap \mathscr{A}_s(\R^n)\) for some \(\alpha > 0\) with \(2s+\alpha\) not an integer, \(u\) is non-negative in \(\R^n_+\) and satisfies \begin{align*}
(-\Delta)^s u +cu \geqslant -Mx_1 \qquad \text{in } B_\rho^+.
\end{align*}

Then there exists \(C_\rho>0\) depending only on \(n\), \(s\), \(\| c \|_{L^\infty(B_\rho^+)}\), and \(\rho\) such that \begin{align*}
\Anorm{u}  &\leqslant  C_\rho \left(  \inf_{B_{\rho/2}^+} \frac{u(x)}{x_1} +M \right).
\end{align*}  
\end{prop}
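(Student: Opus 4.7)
I would follow the barrier strategy of \thref{MT9uf}, with an antisymmetric barrier adapted to the plane $\{x_1=0\}$. Let $\zeta\in C^\infty_c(\R^n)$ be a radial cut-off with $\zeta\equiv 1$ on $B_{\rho/2}$, $\zeta\equiv 0$ outside $B_{3\rho/4}$, and $0\le\zeta\le 1$, and define
\[
\psi(x):= x_1\,\zeta(x).
\]
Then $\psi$ is antisymmetric, non-negative in $\R^n_+$, smooth with compact support, and agrees with $x_1$ on $B_{\rho/2}^+$. Since $\psi\in C^\infty_c(\R^n)\subset\mathscr L_s(\R^n)$, \thref{eeBLRjcZ} tells us that $(-\Delta)^s\psi$ coincides with the classical fractional Laplacian, and hence is smooth. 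Because $\psi$ is antisymmetric so is $(-\Delta)^s\psi$, and therefore $(-\Delta)^s\psi(0,x')=0$ for every $x'$. By smoothness this gives the crucial barrier estimate
\[
|(-\Delta)^s\psi(x)|\le C_\rho\, x_1\qquad\text{for all }x\in B_\rho^+.
\]
The factor of $x_1$ is precisely what is needed to balance the right-hand side $-Mx_1$ in the hypothesis.

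\textbf{Touching and main comparison.} Let $\tau\ge 0$ be the largest constant such that $u\ge\tau\psi$ on $\R^n_+$; as in the footnote of \thref{YLj1r} this $\tau$ exists and is finite, and since $\psi=x_1$ on $B_{\rho/2}^+$ we have $\tau\le\inf_{B_{\rho/2}^+} u(x)/x_1$. By maximality, there is a touching point $a\in\overline{B_{3\rho/4}^+}$ at which $(u-\tau\psi)(a)=0$, which, modulo the boundary issue discussed below, can be chosen with $a_1>0$. On one hand, using the hypothesis on $u$ and the barrier bound,
\[
(-\Delta)^s(u-\tau\psi)(a)+c(a)(u-\tau\psi)(a)\ge -Ma_1-\tau|(-\Delta)^s\psi(a)|-\|c\|_{L^\infty(B_\rho^+)}\tau\psi(a)\ge -C_\rho(M+\tau)\,a_1.
\]
On the other hand, $w:=u-\tau\psi$ is antisymmetric, non-negative in $\R^n_+$, and $w(a)=0$, so in \thref{mkG4iRYH} the term $(c_{1,s}/s)w(a)a_1^{-2s}$ drops and
\[
(-\Delta)^s w(a)=-c_{n,s}\int_{\R^n_+}\!\Bigl(\frac{1}{|a-y|^{n+2s}}-\frac{1}{|a_*-y|^{n+2s}}\Bigr)w(y)\,dy.
\]
Combining~\eqref{buKHzlE6} with $|a_*-y|^{n+2s+2}\le C_\rho(1+|y|^{n+2s+2})$ (valid for $a\in\overline{B_{3\rho/4}^+}$) gives $(-\Delta)^s w(a)\le -C_\rho\, a_1\,\Anorm{w}$. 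Comparing the two inequalities (with $c(a)w(a)=0$) and dividing by $a_1>0$ yields $\Anorm{w}\le C_\rho(M+\tau)$. Since $\Anorm{\psi}\le C_\rho$, we conclude
\[
\Anorm{u}\le \Anorm{w}+\tau\Anorm{\psi}\le C_\rho(M+\tau)\le C_\rho\Bigl(M+\inf_{B_{\rho/2}^+}\frac{u(x)}{x_1}\Bigr),
\]
as desired.

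\textbf{Main obstacle.} The delicate point is to justify that the touching $a$ can be taken with $a_1>0$. A priori $\inf_{\{\psi>0\}}u/\psi$ could only be achieved in the limit as $x_1\to 0^+$, since both $u$ and $\psi$ vanish on $\{x_1=0\}$. I would handle this by taking a sequence $x_k\to(0,a')$ with $(x_k)_1>0$ and $u(x_k)/\psi(x_k)\to\tau$: the $C^{2s+\alpha}$ regularity of $u$ and $\psi$ (with $\alpha>0$) forces $w(x_k)=o((x_k)_1^{2s})$, making the $(c_{1,s}/s)w(x_k)(x_k)_1^{-2s}$ term in \thref{mkG4iRYH} vanish in the limit, so that the above comparison passes through. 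An alternative is to add to $\psi$ a small antisymmetric bump supported strictly inside $\R^n_+$, run the argument for the perturbed barrier, and let the perturbation parameter tend to $0$.
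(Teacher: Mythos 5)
Your interior-contact argument (barrier \(\psi(x)=x_1\zeta(x)\), maximal \(\tau\), comparison at a touching point \(a\) with \(a_1>0\) using \eqref{buKHzlE6} and \(|a_\ast-y|^{n+2s+2}\le C_\rho(1+|y|^{n+2s+2})\)) is essentially the paper's proof of this proposition. The genuine gap is exactly the case you flag as the "main obstacle", and your proposed fix does not close it. Since \(u-\tau\psi\) is antisymmetric, it vanishes identically on \(\{x_1=0\}\), so every point of the hyperplane is a trivial touching point; the real alternative to interior contact is that the infimum of \(u/\psi\) is only attained asymptotically at the plane, and there your order-counting is off. Along a sequence \(x_k\to(0,a')\) the comparison reads \(-C(M+\tau)(x_k)_1-\|c\|w(x_k)\le(-\Delta)^sw(x_k)\), and to conclude \(\Anorm{w}\le C_\rho(M+\tau)\) you must divide by \((x_k)_1\); hence the error terms in the upper bound for \((-\Delta)^sw(x_k)\) must be \(o((x_k)_1)\), not merely \(o(1)\). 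For the zero-th order term of \thref{mkG4iRYH} this requires \(w(x_k)=o\big((x_k)_1^{1+2s}\big)\), whereas \(C^{2s+\alpha}\) regularity gives at best \(w=O(x_1^{2s+\alpha})\) along the normal ray (and even less along an arbitrary minimising sequence): for a profile like \(u(x)\sim x_1|x_1|^{2s+\alpha-1}\) with \(\alpha<1\) one has \(w(x_k)(x_k)_1^{-2s}/(x_k)_1\sim(x_k)_1^{\alpha-1}\to\infty\), so the term you claim "vanishes in the limit" can in fact dominate. Moreover, when \(2s+\alpha\le1\) the function \(u\) need not be differentiable, and the near-diagonal principal-value contribution (whose kernel has an additional near-singularity at the reflected point \(x_\ast\), at distance \(2x_1\) from \(x\)) also has to be shown to be \(o(x_1)\), which is not addressed.

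This is precisely why the paper spends a large part of the argument on two auxiliary results that your proposal omits: \thref{tZVUcYJl}, which approximates \(u\) by antisymmetric functions in \(C^\infty_0(\R^n)\) while preserving non-negativity in \(\R^n_+\) and the differential inequality with right-hand side \(-(M+\varepsilon)x_1\) (this step is itself delicate, relying on \(\int y_1(\eta_\varepsilon(x-y)-\eta_\varepsilon(x_\ast-y))\,dy\le x_1\)); and \thref{6fD34E6w}, which for smooth compactly supported antisymmetric \(v\) with \(\partial_1v=0\) at the contact point identifies \(\lim_{h\to0}(-\Delta)^sv(a+he_1)/h\) with the desired negative integral of \(y_1v(y)\). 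For such smooth \(v\), oddness plus the vanishing normal derivative gives \(v(a+he_1)=O(h^3)\), which is what makes all error terms genuinely \(o(h)\) — a mechanism unavailable at the \(C^{2s+\alpha}\) level. Your alternative suggestion of perturbing \(\psi\) by a bump supported strictly inside \(\R^n_+\) does not help either: near \(\{x_1=0\}\) the barrier is unchanged, so the contact can still occur only in the limit at the plane. To repair the proof you would need either the paper's approximation step (or an equivalent regularisation) before running the contact argument, or a substitute proof of the derivative-limit formula valid for merely \(C^{2s+\alpha}\cap\mathscr A_s\) functions.
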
 

As with the interior counter-part of \thref{SwDzJu9i}, that is
\thref{MT9uf}, we will prove the following rescaled version of \thref{SwDzJu9i},
namely \thref{g9foAd2c}. This version is essential to the proof of the boundary local boundedness for sub-solutions. Once \thref{g9foAd2c} has been proven, \thref{SwDzJu9i} follows easily with some minor adjustments. 

\begin{prop} \thlabel{g9foAd2c} 
Let \(M\in \R\), \(\rho >0\), and \(c\in L^\infty(B_\rho^+)\). Suppose that \(u\in C^{2s+\alpha}(B_\rho)\cap \mathscr{A}_s(\R^n)\) for some \(\alpha >0\) with~\(2s+\alpha\) not an integer, \(u\) is non-negative in \(\R^n_+\) and satisfies \begin{align*}
(-\Delta)^s u +cu \geqslant -Mx_1\qquad \text{in } B_\rho^+.
\end{align*}

Then there exists \(C>0\) depending only on \(n\), \(s\), and \(\rho^{2s}\| c \|_{L^\infty(B_\rho^+)}\) such that \begin{align*}
\frac 1 {\rho^{n+2}} \int_{B_{\rho/2}^+} y_1 u(y) \dd y &\leqslant C \left(  \inf_{B_{\rho/2}^+} \frac{u(x)}{x_1} + M \rho^{2s}\right). 
\end{align*}  Moreover, the constant \(C\) is of the form \begin{align*}
C=C' \big(1+\rho^{2s} \| c \|_{L^\infty(B_\rho^+)} \big) 
\end{align*} with \(C'\) depending only on \(n\) and \(s\). 
\end{prop}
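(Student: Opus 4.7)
The plan is to mirror the strategy of the proof of Proposition~\ref{YLj1r}, replacing the barrier~$\varphi^{(1)}$ (which was antisymmetric with respect to the translated hyperplane~$T_\rho$) with one that is antisymmetric with respect to~$\{x_1=0\}$ itself and that behaves like~$x_1$ near the origin. First I would rescale by setting~$\tilde u(x) := u(\rho x)$ and~$\tilde c(x) := \rho^{2s} c(\rho x)$, so that~$\tilde u$ is antisymmetric, non-negative on~$\R^n_+$, belongs to~$\mathscr A_s(\R^n)\cap C^{2s+\alpha}(B_1)$, and satisfies~$(-\Delta)^s \tilde u + \tilde c \tilde u \geqslant -M\rho^{2s+1} x_1$ in~$B_1^+$. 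After rescaling, the conclusion becomes
\[
\int_{B_{1/2}^+} z_1 \tilde u(z)\,dz \leqslant C\left(\inf_{B_{1/2}^+} \frac{\tilde u(x)}{x_1} + M\rho^{2s+1}\right),
\]
with~$C = C'(1+\rho^{2s}\|c\|_{L^\infty(B_\rho^+)})$ for some~$C'$ depending only on~$n$ and~$s$.

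Next I would introduce the antisymmetric barrier~$\varphi(x) := x_1 \zeta(x)$, where~$\zeta \in C^\infty_c(\R^n)$ is symmetric with respect to~$\{x_1=0\}$, satisfies~$0\leqslant \zeta \leqslant 1$, equals~$1$ on~$B_{1/2}$, and is supported in~$B_{3/4}$. Since~$\varphi \in C^\infty_c(\R^n)\cap \mathscr A_s(\R^n)\cap \mathscr L_s(\R^n)$, Proposition~\ref{eeBLRjcZ} ensures that~$(-\Delta)^s\varphi$ interpreted via~\eqref{OopaQ0tu} coincides with the classical fractional Laplacian, which is smooth and antisymmetric on~$\R^n$. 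In particular~$(-\Delta)^s\varphi \equiv 0$ on~$\{x_1=0\}$, so a mean-value argument gives the refined estimate
\[
|(-\Delta)^s\varphi(a)| \leqslant C a_1 \qquad \text{for all }a \in \overline{B_{3/4}^+},
\]
with~$C$ depending only on~$n$ and~$s$. This extra factor of~$a_1$ is crucial: it will exactly balance the~$x_1$ on the right-hand side of the differential inequality.

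With~$\varphi$ in hand, I would run the touching argument. Let~$\tau \geqslant 0$ be the largest value for which~$\tilde u \geqslant \tau\varphi$ on~$\R^n_+$; since~$\varphi = x_1$ on~$B_{1/2}^+$, this forces~$\tau \leqslant \inf_{B_{1/2}^+} \tilde u(x)/x_1$. By continuity there exists a touching point~$a\in \overline{B_{3/4}^+}$ with~$\tilde u(a) = \tau \varphi(a)$; suppose for the moment that~$a_1>0$. Setting~$w := \tilde u - \tau\varphi$, which is antisymmetric, non-negative on~$\R^n_+$, lies in~$\mathscr A_s(\R^n)$, and vanishes at~$a$, the definition~\eqref{OopaQ0tu} simplifies to
\[
(-\Delta)^s w(a) = -c_{n,s} \int_{\R^n_+} \left(\frac{1}{|a-y|^{n+2s}} - \frac{1}{|a_\ast -y|^{n+2s}}\right) w(y) \,dy.
\]
Discarding the non-positive contribution from~$\R^n_+\setminus B_{1/2}^+$, and applying the kernel lower bound~\eqref{buKHzlE6} together with~$|a_\ast - y| \leqslant 5/4$ for~$(a,y)\in B_{3/4}^+\times B_{1/2}^+$, I would obtain
\[
(-\Delta)^s w(a) \leqslant -Ca_1 \left(\int_{B_{1/2}^+} y_1 \tilde u(y)\,dy - C'\tau\right).
\]
On the other hand, using the equation, the touching identity~$\tilde u(a)=\tau\varphi(a)\leqslant \tau a_1$, and the refined bound on~$(-\Delta)^s\varphi$,
\[
(-\Delta)^s w(a) \geqslant -a_1 \bigl[M\rho^{2s+1} + (C + \|\tilde c\|_{L^\infty(B_1^+)})\tau\bigr].
\]
Combining both estimates, dividing by~$a_1>0$, using~$\tau \leqslant \inf_{B_{1/2}^+} \tilde u/x_1$, and undoing the rescaling then yields the desired inequality with a constant of the prescribed form.

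The main obstacle is the possibility that~$a_1 = 0$, in which case the pointwise identity~\eqref{OopaQ0tu} is not available at~$a$. I would handle this by an approximation scheme: replace~$\varphi$ by~$\varphi + \epsilon\psi$ for a suitable antisymmetric perturbation~$\psi$ strictly positive on~$\overline{B_{1/2}^+}$ and supported where~$\zeta > 0$, which forces an interior touching point~$a_\epsilon$ with~$(a_\epsilon)_1 > 0$; the argument above applies uniformly in~$\epsilon$, and one passes to the limit~$\epsilon\to 0^+$. A secondary technical point is verifying that the bound~$|(-\Delta)^s\varphi(a)| \leqslant Ca_1$ and the kernel bound depend only on~$n$ and~$s$, which holds because~$\varphi$ is a fixed smooth compactly supported antisymmetric function independent of~$\rho$.
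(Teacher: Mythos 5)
Your interior-touching argument is essentially the paper's: the same barrier \(\varphi^{(2)}(x)=x_1\zeta(x)\), the same maximal \(\tau\), the bound \(\tau\leqslant\inf_{B_{1/2}^+}\tilde u/x_1\), the estimate \(|(-\Delta)^s\varphi^{(2)}(a)|\leqslant Ca_1\), and the kernel lower bound~\eqref{buKHzlE6} at a touching point with \(a_1>0\). The genuine gap is in the case the touching occurs on the symmetry hyperplane, and your proposed fix does not work. First, it is internally inconsistent: a continuous antisymmetric \(\psi\) vanishes on \(\{x_1=0\}\), so it cannot be strictly positive on \(\overline{B_{1/2}^+}\), which contains points of \(\{x_1=0\}\). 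Second, even granting a \(\psi\geqslant0\) in \(\R^n_+\) vanishing on the hyperplane, adding \(\epsilon\psi\) lowers the ratio \(\tilde u/(\varphi^{(2)}+\epsilon\psi)\) in the interior while the boundary limit of the ratio (which is \(\partial_1\tilde u/(\partial_1\varphi^{(2)}+\epsilon\partial_1\psi)\)) is unchanged or also lowered; so if \(\tilde u/x_1\) attains its infimum over the relevant region only at \(\{x_1=0\}\), with strictly larger values inside, the touching remains at the hyperplane for every small \(\epsilon\) and no interior touching point \(a_\epsilon\) with \((a_\epsilon)_1>0\) is produced. Taking instead a non-antisymmetric \(\psi>0\) on part of \(\{x_1=0\}\) forces \(\tau=0\) (since \(\tilde u=0\) there), trivialising the argument, and moreover destroys the antisymmetry of \(\tilde u-\tau(\varphi^{(2)}+\epsilon\psi)\) that the representation~\eqref{OopaQ0tu} requires. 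So the boundary case cannot be perturbed away; it must be handled.

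There is also a regularity point you pass over: with only \(u\in C^{2s+\alpha}(B_\rho)\) and \(2s+\alpha<1\), the function need not be \(C^1\), so one cannot even formulate the boundary touching condition \(\partial_1\tilde u(a)=\tau\partial_1\varphi^{(2)}(a)\) without a preliminary smoothing. The paper resolves both issues at once: it first reduces to \(\tilde u\in C^\infty_0(\R^n)\) by the antisymmetric mollification of \thref{tZVUcYJl} (replacing \(c\) by the constant \(\|c\|_{L^\infty(B_\rho^+)}\), admissible since \(u\geqslant0\) in \(\R^n_+\), and \(M\) by \(M+\varepsilon\)), and then, when the touching is at \(a\in B_{3/4}\cap\{x_1=0\}\) in the derivative sense, it evaluates the differential inequality at \(a+he_1\), divides by \(h\), and lets \(h\to0^+\), using the limit formula of \thref{6fD34E6w} for the antisymmetric function \(\tilde u-\tau\varphi^{(2)}\) (which has vanishing \(x_1\)-derivative at \(a\)) to recover exactly the same lower bound \(C\int_{B_{1/2}^+}y_1\tilde u(y)\,\dd y-C\tau\) as in the interior case. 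You should replace your \(\epsilon\)-perturbation step with this (or an equivalent) treatment of the boundary touching case; the rest of your outline then goes through as written.
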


Before we prove \thref{g9foAd2c}, we require some lemmata. 

\begin{lem} \thlabel{tZVUcYJl}
Let \(M\geqslant0\), \(k \geqslant 0\), and suppose that \(u\in C^{2s+\alpha}(B_1)\cap \mathscr{A}_s(\R^n)\) for some~\(\alpha>0\) with~\(2s+\alpha\) not an integer. \begin{enumerate}[(i)]
\item If \(u\) satisfies \begin{align*}
(-\Delta)^s u +k u \geqslant -Mx_1\qquad \text{in } B_1^+
\end{align*} then for all \(\varepsilon>0\) sufficiently small there exists \(u_\varepsilon \in C^\infty_0(\R^n)\)
antisymmetric and such that \begin{equation}
(-\Delta)^s u_\varepsilon +k u_\varepsilon \geqslant -(M+\varepsilon) x_1\qquad \text{in } B_{7/8}^+. \label{EuoJ3En6}
\end{equation}
\item If \(u\) satisfies \begin{align*}
(-\Delta)^s u +k u \leqslant Mx_1\qquad \text{in } B_1^+
\end{align*} then for all \(\varepsilon>0\) sufficiently small there exists \(u_\varepsilon \in C^\infty_0(\R^n)\)
antisymmetric and such that \begin{align*}
(-\Delta)^s u_\varepsilon +k u_\varepsilon \leqslant (M+\varepsilon) x_1\qquad \text{in } B_{7/8}^+. 
\end{align*}
\end{enumerate} In both cases the sequence \(\{ u_\varepsilon\} \) converges to \(u\) uniformly in \(B_{7/8}\).

Additionally, if \(u \) is non-negative in \(\R^n_+\) then \(u_\varepsilon\) is also non-negative in \(\R^n_+\).
\end{lem}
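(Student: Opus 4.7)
The plan is to define $u_\varepsilon := (u\chi_R)\ast\eta_\delta$, where $\chi_R$ is a smooth radial cutoff supported in $B_{2R}$ with $\chi_R\equiv 1$ on $B_R$, and $\eta_\delta$ is a standard radial mollifier of radius $\delta$ chosen to be non-increasing in $|z|$. Since $\chi_R$ and $\eta_\delta$ are even in $x_1$, the antisymmetry of $u$ transfers to $u_\varepsilon$, and $u_\varepsilon \in C^\infty_0(\R^n)$; moreover, as $u \in C^{2s+\alpha}(B_1)$, standard mollification theory gives $u_\varepsilon \to u$ uniformly on $B_{7/8}$ as $R \to \infty$ and $\delta \to 0^+$. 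Assuming $u \geqslant 0$ on $\R^n_+$, antisymmetry lets us rewrite
\[ u_\varepsilon(x) = \int_{\R^n_+} u(y)\chi_R(y)\bigl[\eta_\delta(x-y)-\eta_\delta(x-y_\ast)\bigr]\dd y \]
for $x \in \R^n_+$; since $|x-y|\leqslant|x-y_\ast|$ whenever $x,y \in \R^n_+$, the monotonicity of $\eta_\delta$ gives $u_\varepsilon \geqslant 0$ in $\R^n_+$.

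\textbf{Truncation estimate and reduction.} Next I would estimate the truncation error on $B_{15/16}^+$. Subtracting formula \eqref{OopaQ0tu} applied to $u$ and to $u\chi_R$, the interior portion cancels and only a tail integral over $\R^n_+ \setminus B_R$ remains; the kernel bound \eqref{LxZU6} together with $u\in\mathscr{A}_s(\R^n)$ then yields
\[ \bigl|(-\Delta)^s u(x) - (-\Delta)^s(u\chi_R)(x)\bigr| \leqslant \varepsilon_1(R)\,x_1 \qquad\text{for all } x \in B_{15/16}^+, \]
with $\varepsilon_1(R)\to 0$ as $R\to\infty$ by dominated convergence. Combining this with the super-solution hypothesis, the antisymmetric auxiliary function
\[ F(y) := (-\Delta)^s(u\chi_R)(y) + k\,u(y) + (M+\varepsilon_1(R))\,y_1 \]
satisfies $F \geqslant 0$ on $B_{15/16}^+$. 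Since $u\chi_R \in L^1(\R^n)$ and is $C^{2s+\alpha}$ on $B_1$, the convolution identity $(-\Delta)^s u_\varepsilon = \bigl((-\Delta)^s(u\chi_R)\bigr)\ast\eta_\delta$ holds pointwise on $B_{7/8}$. Using that $u_\varepsilon = u\ast\eta_\delta$ on $B_{7/8}$ (for $R>1$ and $\delta<1/8$) together with $\int z_1\,\eta_\delta(z)\dd z=0$, a direct calculation gives
\[ (-\Delta)^s u_\varepsilon(x) + k\,u_\varepsilon(x) + (M+\varepsilon_1(R))\,x_1 = \int_{\R^n} F(x-z)\,\eta_\delta(z)\dd z \qquad\text{for all } x \in B_{7/8}^+. \]

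\textbf{Antisymmetry trick and conclusion.} It remains to show this integral is non-negative. Changing variables to $v = x-z$ and splitting by the sign of $v_1$, the antisymmetry $F(v_\ast) = -F(v)$ and radial evenness of $\eta_\delta$ recast the integral as
\[ \int_{\R^n_+} F(v)\,\bigl[\eta_\delta(x-v) - \eta_\delta(x-v_\ast)\bigr]\dd v. \]
For $x,v \in \R^n_+$ one has $|x-v| \leqslant |x-v_\ast|$, so the non-increasing choice of $\eta_\delta$ makes the bracket non-negative; for $\delta$ small and $x \in B_{7/8}^+$ the integrand is supported in $B_{15/16}^+$, where $F \geqslant 0$. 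Hence the integral is non-negative, and (i) follows by first taking $R$ so large that $\varepsilon_1(R) \leqslant \varepsilon$, then $\delta$ small enough. Part (ii) is symmetric, with the reversed inequality on $F$ and the same rearrangement. The main technical obstacle is justifying the convolution identity for $(-\Delta)^s(u\chi_R)$ pointwise in $B_{7/8}$: via \thref{zNNKjlJJ} and \thref{eeBLRjcZ} one needs that $(-\Delta)^s(u\chi_R)$ is a continuous function on a neighbourhood of $\overline{B_{7/8}}$, and to check that the Fourier multiplier $|\xi|^{2s}$ commutes with convolution against $\eta_\delta$ on the tempered distribution $u\chi_R \in L^1(\R^n)$, so that both sides of the identity coincide as smooth functions on $B_{7/8}$.
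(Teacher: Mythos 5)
Your proposal is correct and follows the paper's proof essentially step for step: cut off with a smooth radial $\chi_R$, bound the truncation error via~\eqref{LxZU6} and $u\in\mathscr A_s(\R^n)$ to be $O(x_1)$ uniformly as $R\to\infty$, mollify by a radially non-increasing $\eta_\delta$, and propagate the one-sided inequality through the antisymmetric rearrangement $\int_{\R^n_+}F(v)\bigl[\eta_\delta(x-v)-\eta_\delta(x-v_\ast)\bigr]\dd v\geqslant 0$ using $|x-v|\leqslant|x-v_\ast|$ for $x,v\in\R^n_+$; the ``main technical obstacle'' you flag is resolved in the paper by invoking Silvestre's interior regularity result to get $(-\Delta)^s(u\chi_R)\in C^\alpha(B_1)$ and then appealing to standard mollifier properties. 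Your one cosmetic departure --- absorbing $(M+\varepsilon_1(R))y_1$ into the auxiliary antisymmetric function $F$ so that the vanishing first moment of $\eta_\delta$ reproduces $x_1$ exactly --- neatly replaces the paper's separate estimate $\int_{B_\varepsilon^+(x)}y_1\bigl(\eta_\varepsilon(x-y)-\eta_\varepsilon(x_\ast-y)\bigr)\dd y\leqslant x_1$, but the decomposition, key lemmata, and logic are the same.
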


For the usual fractional Laplacian, \thref{tZVUcYJl} follows immediately by taking a mollification of \(u\) and in principle, this is also the idea here. However, there are a couple of technicalities that need to be addressed. The first is that here the fractional Laplacian is defined according to \thref{mkG4iRYH} and it remains to be verified that this fractional Laplacian commutes with the convolution operation as the usual one does. As a matter of fact, \thref{mkG4iRYH} does not lend itself well to the Fourier transform which makes it difficult to prove such a property. We overcome this issue by first multiplying \(u\) by an appropriate cut-off function which allows us to reduce to the case \((-\Delta)^s\) as given by the usual definition.  

The second issue is that directly using the properties of mollifiers, we can only expect to
control~\(u_\varepsilon\) in some~\(U \Subset B_1^+\) and not up to~\(\{x_1=0\}\). We can relax this thanks to the antisymmetry of \(u\). 

\begin{proof}[Proof of Lemma~\ref{tZVUcYJl}]
Fix \(\varepsilon>0\). Let \(R>1\) and let~\(\zeta\) be a smooth radial cut-off function such that $$
\zeta \equiv 1 \text{ in } B_R, \quad \zeta \equiv 0 \text{ in } \R^n \setminus B_{2R},
\quad{\mbox{and}}\quad 0\leqslant \zeta \leqslant 1 .
$$
Let also \(\bar  u := u \zeta\).

Now let us define a function \(f:B_1\to\R\) as follows: let \(f(x)=(-\Delta)^s u(x) + k u(x)\) for all \(x\in B_1^+\), \(f(x) = 0\) for all \(x\in B_1\cap \{x_1=0\}\), and \(f(x)=-f(x_\ast)\). We also define \(\bar f:B_1\to \R\) analogously with \(u\) replaced with \(\bar u\). By definition, both \(f\) and \(\bar f\) are antisymmetric\footnote{Note that the definition of antisymmetric requires the domain of \(f\) and \(\bar f\) to be \(\R^n\). For simplicity, we will still refer to \(f\) and \(\bar f\) as antisymmetric in this context since this technicality does not affect the proof.}, but note carefully that, \emph{a priori}, there is no reason to expect any regularity of \(f\) and \(\bar f\) across \(\{x_1=0\}\) (we will in fact find that \(\bar f \in C^\alpha(B_1)\)).

We claim that for \(R\) large enough (depending on \(\varepsilon\)), \begin{align}
\vert \bar f(x) - f(x) \vert \leqslant \varepsilon x_1\qquad \text{for all } x\in B_1^+. \label{8LncZIti}
\end{align} Indeed, if \(x\in B_1^+\) then \begin{align*}
\bar f(x) - f(x) =(-\Delta)^s(\bar u - u ) (x) &= C \int_{\R^n_+\setminus B_R} \bigg ( \frac 1 {\vert x - y \vert^{n+2s}} - \frac 1 {\vert x_\ast- y \vert^{n+2s}} \bigg ) ( u -\bar u ) (y) \dd y .
\end{align*} {F}rom~\eqref{LxZU6}, it follows that \begin{align*}
\vert \bar f(x) - f(x) \vert  \leqslant  Cx_1 \int_{\R^n_+\setminus B_R}  \frac{y_1\vert u(y) -\bar  u(y)\vert}{\vert x - y \vert^{n+2s+2}} \dd y \leqslant Cx_1 \int_{\R^n_+\setminus B_R}  \frac{y_1\vert u(y) \vert }{1+ \vert y \vert^{n+2s+2}} \dd y .
\end{align*} Since \(u\in \mathscr A_s(\R^n)\), taking \(R\) large we obtain~\eqref{8LncZIti}.

Next, consider the standard mollifier \(\eta(x) := C_0 \chi_{B_1}(x) e^{-\frac 1 {1-\vert x\vert ^2}}\) with \(C_0>0\) such that \(\int_{\R^n} \eta(x) \dd x =1\) and let~\(\eta_\varepsilon(x) := \varepsilon^{-n} \eta (x/\varepsilon)\). Also, let \(u_\varepsilon := \bar u \ast \eta_\varepsilon\) and \(f_\varepsilon := \bar f \ast \eta_\varepsilon\).

Notice that~$u_\epsilon\in C^\infty_0(\R^n)$ and it is antisymmetric. Additionally, we show that~\eqref{EuoJ3En6}
holds true in case~$(i)$ (case~$(ii)$ being analogous).

To this end, we observe that, since \(\bar u\) has compact support, we have that~\(\bar u \in \mathscr L_s(\R^n)\), so by \thref{eeBLRjcZ}, \((-\Delta)^s \bar u\) can be understood in the usual sense in \(B_1\), that is, by~\eqref{ZdAlT}. Moreover,
by~\cite[Propositions~2.4-2.6]{MR2270163}, we have that~\((-\Delta)^s\bar u\in C^\alpha(B_1)\) which gives that \(\bar f \in C^\alpha(B_1)\) and \begin{align*}
(-\Delta)^s\bar u +k \bar  u &= \bar f \qquad \text{in }B_1.
\end{align*} In particular, we may use standard properties of mollifiers to immediately obtain \begin{align*}
(-\Delta)^s u_\varepsilon +k u_\varepsilon &= f_\varepsilon \qquad \text{in }B_{7/8}.
\end{align*} 

Also, since \(\bar f\) is antisymmetric, it follows that \begin{align*}
f_\varepsilon(x) &= \int_{\R^n }  \bar f (y) \eta_\varepsilon (x-y) \dd y = \int_{\R^n_+ } \bar f (y)\big (  \eta_\varepsilon (x-y) - \eta_\varepsilon (x_\ast-y) \big )\dd y.
\end{align*} Observe that, since \(\eta \) is monotone decreasing in the radial direction and \(\vert x- y \vert \leqslant \vert x_\ast - y \vert\) for all~\(x,y\in \R^n_+\),  \begin{align}
\eta_\varepsilon (x-y) - \eta_\varepsilon (x_\ast-y) \geqslant 0 \qquad \text{for all } x,y\in \R^n_+. \label{DPmhap5t}
\end{align} Moreover, by~\eqref{8LncZIti}, we see that~\(\bar f (x) \geqslant -(M+\varepsilon)x_1\) for all \(x\in B_1^+\), so if \(x\in B_{7/8}^+\) and \(\varepsilon>0\) is sufficiently small (independent of \(x\)) then it follows that \begin{equation}\label{fjrehgeruig009887}
f_\varepsilon(x) =\int_{B_\varepsilon^+(x)} \bar f (y)\big (  \eta_\varepsilon (x-y) - \eta_\varepsilon (x_\ast-y) \big )\dd y \geqslant -(M+\varepsilon)\int_{B_\varepsilon^+(x)}  y_1 \big (  \eta_\varepsilon (x-y) - \eta_\varepsilon (x_\ast-y) \big )\dd y.
\end{equation}

Next, we claim that \begin{align}
\int_{B_\varepsilon^+(x)}  y_1 \big (  \eta_\varepsilon (x-y) - \eta_\varepsilon (x_\ast-y) \big )\dd y &\leqslant  x_1. \label{Hp0lBCzB}
\end{align} Indeed, \begin{align*}
\int_{B_\varepsilon^+(x)}  y_1 \big (  \eta_\varepsilon (x-y) - \eta_\varepsilon (x_\ast-y) \big )\dd y &= \int_{B_\varepsilon(x)}  y_1   \eta_\varepsilon (x-y) \dd y \\&-\int_{B_\varepsilon^-(x)}  y_1    \eta_\varepsilon (x-y) \dd y - \int_{B_\varepsilon^+(x)}  y_1 \eta_\varepsilon (x_\ast-y) \dd y \\
&= \int_{B_\varepsilon(x)}  y_1   \eta_\varepsilon (x-y) \dd y  - \int_{B_\varepsilon^+(x)\setminus B_\varepsilon^+(x_\ast)}  y_1   \eta_\varepsilon (x_\ast-y) \dd y \\
&\leqslant \int_{B_\varepsilon(x)}  y_1   \eta_\varepsilon (x-y) \dd y .
\end{align*} Moreover, using that \(z \mapsto z_1 \eta(z)\) is antisymmetric and \(\int_{B_\varepsilon} \eta(z)\dd z =1\), we obtain that \begin{align*}
 \int_{B_\varepsilon(x)}  y_1   \eta_\varepsilon (x-y) \dd y &=  \int_{B_\varepsilon(x)}  (y_1-x_1)   \eta_\varepsilon (x-y) \dd y +x_1  \int_{B_\varepsilon(x)}    \eta_\varepsilon (x-y) \dd y =x_1
\end{align*}  which gives \eqref{Hp0lBCzB}. 

{F}rom~\eqref{fjrehgeruig009887} and~\eqref{Hp0lBCzB}, we obtain that
$$ f_\epsilon(x) \ge -(M+\epsilon)x_1$$
for all \(x\in B_{7/8}^+\), as soon as~$\epsilon$ is taken sufficiently small.
This is the desired
result in~\eqref{EuoJ3En6}.

Finally, it follows immediately from the properties of mollifiers that \( u_\varepsilon \to u \) uniformly in \(B_{7/8}\) as~\(\varepsilon\to 0^+\). Moreover, if \(u \geqslant 0\) in \(\R^n_+\) then from antisymmetry, \begin{align*}
u_\varepsilon (x) = \int_{\R^n_+ } \bar u(y)\big (  \eta_\varepsilon (x-y) - \eta_\varepsilon (x_\ast-y) \big )\dd y \geqslant 0 \qquad \text{for all } x\in \R^n_+
\end{align*} using~\eqref{DPmhap5t}. 
\end{proof}

Our second lemma is as follows. 

\begin{lem} \thlabel{6fD34E6w}
Suppose that \( v\in C^\infty_0(\R^n)\) is an antisymmetric function satifying \(\partial_1 v (0)=0\). Then \begin{align*}
\lim_{h\to 0 }\frac{(-\Delta)^sv(he_1)} h  = -2c_{n,s}(n+2s) \int_{\R^n_+} \frac{y_1 v(y)}{\vert y \vert^{n+2s+2}} \dd y .
\end{align*}
\end{lem}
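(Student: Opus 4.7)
The plan is to exploit the smoothness of $(-\Delta)^s v$. Since $v\in C^\infty_0(\R^n)\subset \mathscr{L}_s(\R^n)$, Proposition~\ref{eeBLRjcZ} ensures that Definition~\ref{mkG4iRYH} coincides with the classical principal value formula~\eqref{ZdAlT}, and so $(-\Delta)^s v$ is smooth on $\R^n$. The antisymmetry of $v$ transfers to $(-\Delta)^s v$ via a change of variables that uses the reflection being an isometry, so $(-\Delta)^s v(0)=0$. Therefore
\[
\lim_{h\to 0}\frac{(-\Delta)^s v(he_1)}{h}=\partial_1(-\Delta)^s v(0)=(-\Delta)^s(\partial_1 v)(0),
\]
where the second equality is the usual commutation of $\partial_1$ with $(-\Delta)^s$ on Schwartz functions.

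Now set $w:=\partial_1 v$. Differentiating $v(x_*)=-v(x)$ in $x_1$ yields $w(y_*)=w(y)$, so $w$ is \emph{symmetric} across $\{x_1=0\}$, and by hypothesis $w(0)=0$. Note that $\partial_1 v(0)=0$ combined with the antisymmetry of $v$ in fact forces $\nabla v(0)=0$; together with the (anti)symmetry of the second-order partials of $v$, this produces the improved Taylor expansion
\[
v(y)=y_1\sum_{i\geq 2}y_i\,\partial_{1i}v(0)+O(|y|^3)\qquad \text{as }y\to 0,
\]
which, crucially, is $O(|y_1|\,|y|)$ near the origin. Using $w(0)=0$ and the symmetry of $w$ in $y_1$, one then obtains
\[
(-\Delta)^s w(0)=-c_{n,s}\,\PV\int_{\R^n}\frac{w(y)}{|y|^{n+2s}}\,\dd y=-2c_{n,s}\,\PV\int_{\R^n_+}\frac{\partial_1 v(y)}{|y|^{n+2s}}\,\dd y.
\]

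To conclude, I would integrate by parts in $y_1$ on $\R^n_+\setminus B_\varepsilon^+$ and send $\varepsilon\to 0^+$. The boundary contribution on $\{y_1=0\}$ vanishes because $v(0,y')=0$ by antisymmetry, while the interior derivative $\partial_1|y|^{-n-2s}=-(n+2s)y_1|y|^{-n-2s-2}$ produces precisely the kernel in the statement, giving
\[
(-\Delta)^s w(0)=-2c_{n,s}(n+2s)\int_{\R^n_+}\frac{y_1\,v(y)}{|y|^{n+2s+2}}\,\dd y,
\]
which is the desired identity. The main obstacle is that for $s\geq 1/2$ neither principal value is absolutely convergent near the origin, so each must be interpreted as the $\varepsilon\to 0^+$ limit of its restriction to $\R^n_+\setminus B_\varepsilon^+$. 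The improved Taylor expansion above is the key ingredient that makes this rigorous: its leading piece is odd in each $y_i$ for $i\geq 2$, so the contribution of the shell $\partial B_\varepsilon\cap \R^n_+$ to both the integration-by-parts boundary term and to the truncation error of the integrals vanishes by symmetry, while the remainder is of order $\varepsilon^{2-2s}\to 0$.
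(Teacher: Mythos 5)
Your argument is correct, and its skeleton is the same as the paper's: both reduce the limit to \((-\Delta)^s\partial_1 v(0)\) and then integrate by parts, turning \(\partial_1 v\) against \(|y|^{-n-2s}\) into \(v\) against \((n+2s)\,y_1|y|^{-n-2s-2}\), with the factor \(2\) coming from the reflection symmetry. The two points where you genuinely deviate are worth recording. First, for the identity \(\lim_{h\to0}h^{-1}(-\Delta)^sv(he_1)=(-\Delta)^s\partial_1 v(0)\), the paper argues at the level of the singular integral: it shows \(h^{-1}(-\Delta)^s v(he_1)=(-\Delta)^s\partial_1^h v(0)\) for the difference quotient \(\partial_1^h v\), using \(v(0)=0\) and the oddness of \(y\mapsto v(y)+v(-y)\), and then passes to the limit via the estimate \(|(-\Delta)^s g(0)|\le C(\|g\|_{L^\infty}+\|D^2 g\|_{L^\infty})\) applied to \(g=\partial_1^h v-\partial_1 v\). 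You instead use that \((-\Delta)^s v\) is smooth and antisymmetric (hence vanishes at the origin), so the limit is \(\partial_1\big[(-\Delta)^s v\big](0)\), and then commute \(\partial_1\) with \((-\Delta)^s\) on \(C^\infty_0(\R^n)\); this is a clean, standard-facts route (Fourier side), whereas the paper's is self-contained. Second, you are more careful than the paper about convergence: for \(s\ge\frac12\) the integrals \(\int\partial_1v(y)\,|y|^{-n-2s}\,\dd y\) and \(\int_{\R^n_+}y_1v(y)\,|y|^{-n-2s-2}\,\dd y\) are in general only conditionally convergent near the origin (the paper writes them without comment, implicitly in the truncated-exhaustion sense), and your refined expansion \(v(y)=y_1\sum_{i\ge2}y_i\,\partial_{1i}v(0)+O(|y|^3)\), whose leading part is odd in each \(y_i\) with \(i\ge2\), is exactly what makes the truncation errors and the spherical boundary term in the integration by parts of order \(\varepsilon^{2-2s}\to0\). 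This extra precision is an improvement in rigor rather than a divergence in substance; note that in the paper's applications of the lemma the relevant integrand is nonnegative, so the interpretation issue is harmless there.
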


Note that since \(v\in C^\infty_0(\R^n)\), the fractional Laplacian is given by the usual definition as per \thref{eeBLRjcZ}. 

\begin{proof} [Proof of Lemma~\ref{6fD34E6w}]
We will begin by proving that \begin{align}
\lim_{h\to 0 }\frac{(-\Delta)^sv(he_1)} h = (-\Delta)^s \partial_1 v(0). \label{JMT019eU}
\end{align} For this, consider the difference quotient \begin{align*}
\partial^h_1v(x) := \frac{v(x+he_1)-v(x)}h 
\end{align*} for all \(x\in \R^n\) and~\(\vert h \vert>0\) small. Since \(v\) is antisymmetric, \(v(0)=0\), so \begin{align*}
\frac{2v(h e_1)-v(he_1+y)-v(he_1-y)}h&= 2\partial^h_1v(0)-\partial^h_1v(y)-\partial^h_1v(-y) - \frac{v(y)+v(-y)}h
\end{align*} for all \(y\in \R^n\). Moreover, the function~\(y \mapsto v(y)+v(-y)\) is odd with
respect to~$y'$, and so \begin{align*}
\int_{\R^n} \frac{v(y)+v(-y)}{\vert y \vert^{n+2s}}\dd y&=0.
\end{align*} It follows that \begin{align*}
\frac{(-\Delta)^sv(he_1)} h &= \frac{c_{n,s}} 2 \int_{\R^n } \bigg ( 2\partial^h_1v(0)-\partial^h_1v(y)-\partial^h_1v(-y) - \frac{v(y)+v(-y)}h \bigg ) \frac{\dd y }{\vert y \vert^{n+2s}}\\
&= (-\Delta)^s \partial^h_1v(0) .
\end{align*} {F}rom these considerations and the computation at the top of p. 9 in \cite{MR3469920}, we have that \begin{align*}
\bigg \vert \frac{(-\Delta)^sv(he_1)} h  -(-\Delta)^s \partial_1 v(0) \bigg \vert &= \big \vert (-\Delta)^s ( \partial^h_1v-\partial_1v)(0) \big \vert \\
&\leqslant C \Big( \|\partial^h_1v-\partial_1v\|_{L^\infty(\R^n)} + \|D^2\partial^h_1v-D^2\partial_1v\|_{L^\infty(\R^n)} \Big) .
\end{align*} Then we obtain~\eqref{JMT019eU} by sending \(h\to 0\), using that \(\partial^h_1v \to \partial_1v\) in \(C^\infty_{\mathrm{loc}}(\R^n)\) as \(h\to 0\).

To complete the proof, we use that \(\partial_1v(0)=0\) and integration by parts to obtain \begin{align*}
(-\Delta)^s \partial_1 v(0) &= -c_{n,s} \int_{\R^n} \frac{\partial_1v(y)}{\vert y \vert^{n+2s}} \dd y \\
&= c_{n,s} \int_{\R^n} v(y) \partial_1\vert y \vert^{-n-2s} \dd y \\
&= -c_{n,s} (n+2s) \int_{\R^n} \frac{y_1v(y)}{\vert y \vert^{n+2s+2}} \dd y \\
&= -2c_{n,s} (n+2s) \int_{\R^n_+} \frac{y_1v(y)}{\vert y \vert^{n+2s+2}} \dd y 
\end{align*} where the last equality follows from antisymmetry of \(v\). 
\end{proof}

We are now able to give the proof of \thref{g9foAd2c}.

\begin{proof}[Proof of \thref{g9foAd2c}]  Since \(u\) is non-negative in \(\R^n_+\),
we have that~\( (-\Delta)^s u + \| c\|_{L^\infty(B_\rho^+)} u \geqslant  -Mx_1\) in \(B_\rho^+\).
Define~\(\tilde u(x):= u(\rho x)\) and note that \begin{align*}
(-\Delta)^s \tilde u +\rho^{2s} \| c\|_{L^\infty(B_\rho^+)} \tilde  u \geqslant -M\rho^{2s+1} x_1 \qquad  \text{in } B_1^+.
\end{align*} By way of \thref{tZVUcYJl} (i), we may take a \(C^\infty_0(\R^n)\) sequence of functions approximating \(\tilde u\) which satisfy the assumptions of \thref{g9foAd2c} with \(M\) replaced with \(M+\varepsilon\), obtain the estimate, then pass to the limit. In this way we may assume \(\tilde u \in C^\infty_0(\R^n)\). 

Let \(\zeta\) be a smooth radially symmetric cut-off function such that\begin{align*}
\zeta \equiv 1 \text{ in } B_{1/2}, \quad \zeta \equiv 0 \text{ in } \R^n \setminus B_{3/4},\quad {\mbox{and}}
\quad 0\leqslant \zeta \leqslant 1,
\end{align*} and define \(\varphi^{(2)} \in C^\infty_0(\R^n)\) by \(\varphi^{(2)}(x):= x_1 \zeta (x)\) for all \(x\in \R^n\). Suppose that \(\tau \geqslant 0\) is the largest possible value such that \(\tilde u  \geqslant \tau \varphi^{(2)}\) in \(\R^n_+\). For more detail on the existance of such a \(\tau\), see the footnote at the bottom of p. 11. Since \(\varphi^{(2)}(x)=x_1\) in \(B_{1/2}\), we have that \(x_1 \tau \leqslant \tilde u(x)\) for all~\(x\in B_{1/2}\), so \begin{align}
\tau \leqslant \inf_{B_{1/2}^+} \frac{\tilde u(x)}{x_1} =  \rho \inf_{B_{\rho/2}^+} \frac{u(x)}{x_1}. \label{pUJA2JZI}
\end{align} Since \(\tilde u\) is \(C^1\) in \(B_1\), there are two possibilities that can occur: either
there exists \(a\in B_{3/4}^+\) such that~\( \tilde u(a) = \tau \varphi^{(2)} (a)\); or there exists \(a \in B_{3/4} \cap \{ x_1=0\}\) such that \(\partial_1 \tilde u(a) = \tau \partial_1 \varphi^{(2)} (a)\). 

First suppose that there exists \(a\in B_{3/4}^+\) such that \( \tilde u(a) = \tau \varphi^{(2)} (a)\). Since \(\varphi^{(2)}\in C^\infty_0(\R^n)\) and is antisymmetric, \((-\Delta)^s\varphi^{(2)}\) is antisymmetric and~$\partial_1 \varphi^{(2)}=0$ in~$\{x_1=0\}$, we can
exploit Lemma~\ref{6fD34E6w} to say that~\((-\Delta)^s\varphi^{(2)}(x)/x_1\) is bounded in \(\R^n\).

On one hand, using that \((\tilde u-\tau \varphi^{(2)})(a)=0\), we have that \begin{align}
(-\Delta)^s(\tilde u-\tau \varphi^{(2)})(a)  &= (-\Delta)^s(\tilde u-\tau \varphi^{(2)})(a) +\rho^{2s} \| c\|_{L^\infty(B_\rho^+)}
(\tilde u-\tau \varphi^{(2)})(a) \nonumber \\
&\geqslant -M\rho^{2s+1}a_1 -\tau \big  (C + \rho^{2s}\| c \|_{L^\infty(B_\rho^+)} \big  ) a_1 \nonumber \\
&\geqslant  -M\rho^{2s+1}a_1 -C \tau  \big (1 + \rho^{2s} \| c \|_{L^\infty(B_\rho^+)} \big ) a_1 . \label{doW9AF3Y}
\end{align} On the other hand, since \(\tilde u-\tau \varphi^{(2)}\) is antisymmetric, non-negative in \(\R^n_+\), and \((\tilde u-\tau \varphi^{(2)})(a) = 0\), we have by \thref{mkG4iRYH} that  \begin{align*}
(-\Delta)^s(\tilde u-\tau \varphi^{(2)})(a) = -C  \int_{\R^n_+} \bigg (\frac 1 {\vert a - y \vert^{n+2s}} - \frac 1 {\vert a_\ast- y \vert^{n+2s}}\bigg )(\tilde u-\tau \varphi^{(2)})(y) \dd y .
\end{align*} It follows from~\eqref{buKHzlE6} that \begin{align}
(-\Delta)^s(\tilde u-\tau \varphi^{(2)})(a) &\leqslant -C a_1 \int_{B_{1/2}^+}  \frac{y_1 (\tilde u-\tau \varphi^{(2)})(y)} {\vert a_\ast- y \vert^{n+2s+2}}\dd y \nonumber \\
&\leqslant -C a_1 \bigg ( \int_{B_{1/2}^+} y_1 \tilde u(y) \dd y -\tau \bigg ) \nonumber  \\
&=-C a_1 \bigg ( \frac 1 {\rho^{n+1}} \int_{B_{\rho/2}^+} y_1 u(y) \dd y -\tau \bigg ). \label{retlxLlR}
\end{align} Rearranging~\eqref{doW9AF3Y} and~\eqref{retlxLlR}, and recalling~\eqref{pUJA2JZI} gives \begin{align}
\frac 1 {\rho^{n+1}} \int_{B_{\rho/2}^+} y_1 u(y) \dd y &\leqslant C \Big(
\tau (1 + \rho^{2s} \| c \|_{L^\infty(B_\rho^+)} )  + \rho^{2s+1}M \Big) \nonumber  \\
&\leqslant C \rho (1 + \rho^{2s} \| c \|_{L^\infty(B_\rho^+)} )  \left(  \inf_{B_{\rho/2}^+} \frac{u(x)}{x_1} + M \rho^{2s}\right), \label{7O4TL1vF}
\end{align}
which gives the desired result in this case.

Now suppose that there exists \(a \in B_{3/4} \cap \{ x_1=0\}\) such that \(\partial_1 \tilde u(a) =\tau \partial_1 \varphi^{(2)} (a)\). Let \(h>0\) be small and set \(a^{(h)}:= a+ he_1\). On one hand, as in~\eqref{doW9AF3Y},  \begin{align*}
(-\Delta)^s(\tilde u-\tau \varphi^{(2)})(a^{(h)}) &\geqslant -M\rho^{2s+1}h -C \tau \big ( 1+ \rho^{2s}\|c\|_{L^\infty(B_\rho^+)}  \big )h .
\end{align*} Dividing both sides by \(h\) and sending \(h\to 0^+\), it follows from \thref{6fD34E6w} (after a translation) that \begin{align*}
-M\rho^{2s+1} -C \tau \big ( 1+ \rho^{2s}\|c\|_{L^\infty(B_\rho^+)}  \big ) &\leqslant  -C \int_{\R^n_+} \frac{y_1 (\tilde u-\tau \varphi^{(2)})(y)}{\vert y-a\vert^{n+2s+2}} \dd y \\
&\leqslant-C \bigg (  \frac 1{\rho^{n+1}} \int_{B_{\rho/2}^+} y_1 u(y) \dd y - \tau \bigg ). 
\end{align*} Rearranging as before gives the desired result. 
\end{proof}

Next, we give the proof of~\thref{SwDzJu9i}. 

\begin{proof}[Proof of~\thref{SwDzJu9i}]
Let \(\varphi^{(2)}\), \(\tau\), and \(a\) be the same as in the proof of \thref{g9foAd2c}. The proof of \thref{SwDzJu9i} is identical to the proof of \thref{g9foAd2c} except for the following changes.

In the case \(a\in B_{3/4}^+\) and \(\tilde u(a) = \tau \varphi^{(2)}(a)\), we use~\eqref{buKHzlE6} to obtain \begin{align*}
(-\Delta)^s (\tilde u - \tau \varphi^{(2)})(a) \leqslant-C a_1  \int_{\R^n_+} \frac{y_1(\tilde u-\tau \varphi^{(2)})(y)}{\vert a_\ast - y \vert^{n+2s+2}} \dd y \leqslant - C_\rho a_1 \big ( \Anorm{u} -\tau \big )
\end{align*} where we have also used that \begin{align}
\vert a_\ast - y \vert^{n+2s+2} \leqslant C (1+\vert y \vert^{n+2s+2} \big ) \qquad \text{for all } y\in \R^n_+. \label{BWqeWX33}
\end{align} Moreover, in the case \(a\in B_{3/4}\cap \{x_1=0\}\) and \(\partial_1\tilde u(a) = \tau \partial_1\varphi^{(2)}(a)\), 
we have that~\(a=a_\ast\) so~\eqref{BWqeWX33} also gives \begin{align*}
\int_{\R^n_+} \frac{y_1 (\tilde u-\tau \varphi^{(2)})(y)}{\vert y-a\vert^{n+2s+2}} \dd y &\geqslant   C_\rho \big (\Anorm{u} -\tau \big ) .
\qedhere
\end{align*}
\end{proof}

\subsection{Boundary local boundedness}

We now prove the boundary local boundedness for sub-solutions. 

\begin{prop} \thlabel{EP5Elxbz}
Let \(M \geqslant 0\), \(\rho\in(0,1)\), and \(c\in L^\infty(B_\rho^+)\). Suppose that \(u \in C^{2s+\alpha}(B_\rho)\cap \mathscr A_s (\R^n)\) for some \(\alpha>0\) with \(2s+\alpha\) not an integer, and \(u\) satisfies  \begin{align*}
(-\Delta)^su +cu &\leqslant M x_1 \qquad \text{in } B_\rho^+.
\end{align*}

Then there exists \(C_\rho>0\) depending only on \(n\), \(s\), \(\| c \|_{L^\infty(B_\rho^+)}\), and \(\rho\) such that  \begin{align*}
\sup_{x\in B_{\rho/2}^+} \frac{ u(x)}{x_1} &\leqslant C_\rho ( \Anorm{u} +M  ) .
\end{align*} 
\end{prop}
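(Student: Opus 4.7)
The proof mirrors that of \thref{guDQ7}, with the interior weak Harnack inequality \thref{YLj1r} replaced by its boundary counterpart \thref{g9foAd2c}, and with the pointwise quantity $u$ replaced throughout by $u(x)/x_1$; this is the quantity we need to control, and by antisymmetry together with the $C^{2s+\alpha}$ regularity of $u$, it extends continuously across $\{x_1=0\}$.

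First, dividing through by $\Anorm{u}+M$, I may assume $\Anorm{u}\leqslant 1$ and $(-\Delta)^s u + c u \leqslant x_1$ in $B_\rho^+$, and (the nontrivial case) that $u>0$ somewhere in $B_\rho^+$. I then let $\tau\geqslant 0$ be the smallest value such that $u(x)\leqslant \tau\, x_1(\rho-|x|)^{-\beta}$ in $B_\rho^+$, for a suitable exponent $\beta>0$ chosen so that the weight encodes both the boundary vanishing demanded by antisymmetry and the scaling needed to close the argument. It suffices to show $\tau\leqslant C_\rho$. By continuity there is a contact point $a\in\overline{B_\rho^+}$ with $d:=\rho-|a|$ satisfying $u(a)/a_1=\tau d^{-\beta}$ if $a_1>0$, and with the analogous derivative condition $\partial_1 u(a)=\tau\,\partial_1(\text{barrier})(a)$ if $a_1=0$ (just as in the two cases in the proof of \thref{g9foAd2c}). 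Defining
\[
U:=\bigg\{y\in B_\rho^+:\;\frac{u(y)}{y_1}>\frac{u(a)}{2a_1}\bigg\},
\]
I use $\Anorm{u}\leqslant 1$ together with the fact that the weight $y_1/(1+|y|^{n+2s+2})$ is bounded below on compact subsets of $B_\rho^+$ to deduce $|U\cap B_r^+(a)|\leqslant C_\rho d^{\beta+2}/\tau$ for all $r\in(0,d)$.

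The crux is then a covering claim analogous to the one in \thref{guDQ7}: there exist $\theta_0\in(0,1)$ and $C>0$, depending only on $n$, $s$, $\|c\|_{L^\infty(B_\rho^+)}$ and $\rho$, such that for every $\theta\in(0,\theta_0]$,
\[
\big|B_{\theta d/8}^+(a)\setminus U\big|\leqslant \tfrac14\,|B_{\theta d/8}|+\frac{C\,d^{\beta+2-2s}}{\tau}.
\]
This is proved by introducing the antisymmetric auxiliary function
\[
v(x):=\bigg(1-\frac\theta 2\bigg)^{-\beta}\frac{u(a)}{a_1}\,x_1\,\zeta(x)-u(x),
\]
with $\zeta$ an even smooth cut-off adapted to the geometry near $a$, truncating to obtain an antisymmetric $w\geqslant0$ in $\R^n_+$ (as in the passage from $v$ to $w$ in \thref{guDQ7}), controlling the error $(-\Delta)^s(w-v)$ via a boundary analogue of \thref{ltKO2} and the tail bound $\Anorm{u}\leqslant 1$, and finally applying the boundary weak Harnack inequality \thref{g9foAd2c} to $w$ on an appropriate rescaled ball centred at $a$ (or at a small translate thereof, when $a_1$ is close to $0$). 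Combining this claim with the measure estimate from $\Anorm{u}\leqslant 1$, taking $r=\theta_0 d/8$, and rearranging then produces the desired bound $\tau\leqslant C_\rho$.

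\textbf{Main obstacle.} The hardest part will be the covering claim when $a$ is close to, or on, $\{x_1=0\}$. Antisymmetry forces the barrier to carry an $x_1$ factor, which complicates both the verification of the differential inequality for $v$ (since one must check that $(-\Delta)^s(x_1\zeta)$ has the right sign near $a$ after dividing by $x_1$, using \thref{6fD34E6w}) and the comparison $v\geqslant 0$ on the region where $\zeta$ transitions. Moreover, in the regime $a_1\ll d$ the ball $B_{\theta d/8}(a)$ crosses the plane of symmetry, so the tail estimate for $(-\Delta)^s(w-v)$ must track the correct powers of $a_1$ versus $d$, and the rescaled ball used in \thref{g9foAd2c} must be centred so that it lies in $\R^n_+$ after translation; when $a_1=0$ one must pass to a derivative version of the contact condition and invoke \thref{6fD34E6w} exactly as in the second case of the proof of \thref{g9foAd2c}. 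Once these geometric bookkeeping issues are handled uniformly in $a_1$, the iteration closes in the same way as for the interior estimate.
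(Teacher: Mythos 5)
Your overall architecture matches the paper's: normalise so that \(\Anorm{u}\leqslant 1\), introduce a barrier of the form \(\tau\cdot(\text{profile comparable to }x_1)\cdot(\rho-|x|)^{-\beta}\) (the paper takes \(\beta=n+2\) and uses the \(s\)-harmonic profile \(\varphi^{(3)}\) of \thref{KaMmndyO} in place of \(x_1\zeta\), which spares the verification that \((-\Delta)^s(x_1\zeta)\geqslant -Cx_1\)), locate a contact point \(a\) with a derivative contact condition when \(a\in\{x_1=0\}\), and close via a covering claim proved by truncation plus a weak Harnack inequality. The genuine gap is in the measure estimates. {F}rom \(\Anorm{u}\leqslant1\) and \(u>\frac12\tau d^{-\beta}y_1\) on \(U\) you can only deduce a bound on the \emph{weighted} measure \(\int_{U\cap B_r(a)}y_1^2\,dy\leqslant C_\rho d^{\beta}/\tau\); your asserted Lebesgue bound \(|U\cap B_r^+(a)|\leqslant C_\rho d^{\beta+2}/\tau\) does not follow when \(a_1\lesssim r\), because the weight \(y_1\) in \(\Anorm{u}\) degenerates at \(\{x_1=0\}\) and \(U\) could a priori concentrate there with large Lebesgue measure but negligible \(y_1^2\)-mass. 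For the same reason your covering claim, stated in Lebesgue measure uniformly in \(a\), cannot be closed. The paper's resolution---the essential new ingredient of the boundary case---is to run the entire argument in the measure \(x_1^2\,dx\) whenever \(a_1<\theta d/16\) or \(a_1=0\) (both the analogue of \eqref{TTWpDkie} and the claim are phrased as \(\int_{\cdot}x_1^2\,dx\)), and to use the elementary lower bound \(\int_{B_r^+(a)}x_1^2\,dx\geqslant Cr^{n+2}\) of \eqref{sdwet68980poiuytrkjhgfdmnbvcx23456789} in the final rearrangement in place of \(|B_{\theta d/8}|\); only when \(a_1\geqslant\theta d/16\) does one revert to Lebesgue measure.

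A second, related point: you announce that \thref{YLj1r} is ``replaced'' by \thref{g9foAd2c} and propose applying the latter ``on a rescaled ball centred at \(a\)''. But \thref{g9foAd2c} is a statement about half-balls centred on \(\{x_1=0\}\); when \(a_1\geqslant\theta d/16\) no such half-ball on which the equation for \(w\) holds contains \(B_{\theta d/8}(a)\), so the boundary weak Harnack is unusable there and the interior version \thref{YLj1r} is still needed (the paper's Case 1(i)); conversely, when \(0<a_1<\theta d/16\) one must choose \(R\sim\theta d/a_1\) so that \(B_R^+\subset B^+_{\theta d/(4a_1)}(e_1)\) after rescaling by \(a_1\). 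Your ``main obstacle'' paragraph correctly identifies where the difficulty lives, but the two devices that resolve it---the \(x_1^2\)-weighted measure with its matching lower bound, and the three-way case split dictating which weak Harnack inequality applies---are absent, so as written the argument does not close. (Minor additional omission: the reduction to smooth \(u\) via \thref{tZVUcYJl}(ii), without which the existence of the minimal \(\tau\) and of the contact point, and the continuity of \(u/x_1\) up to \(\{x_1=0\}\) when \(2s+\alpha<1\), are not justified.)
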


Before we prove \thref{EP5Elxbz}, we prove the following lemma. 

\begin{lem} \thlabel{KaMmndyO}
Let \(\varphi \in C^\infty(\R)\) be an odd function such that \(\varphi(t)=1\) if \(t>2\) and \(0\leqslant \varphi(t) \leqslant 1\) for all \(t\geqslant0\). Suppose that \(\varphi^{(3)} \in C^s(\R^n) \cap L^\infty(\R^n)\) is the solution to \begin{align}
\begin{PDE}
(-\Delta)^s \varphi^{(3)} &= 0 &\text{in } B_1, \\
\varphi^{(3)}(x) &= \varphi(x_1)  &\text{in } \R^n \setminus B_1.
\end{PDE} \label{pa7rDaw7}
\end{align}

Then \(\varphi^{(3)}\) is antisymmetric and there exists \(C>1\) depending only on \(n\) and \(s\) such that \begin{align*}
C^{-1}x_1  \leqslant \varphi^{(3)}(x) \leqslant Cx_1
\end{align*} for all \(x\in B_{1/2}^+\).
\end{lem}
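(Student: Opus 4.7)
The plan is to proceed in three steps: antisymmetry by uniqueness, the upper bound by interior regularity, and the lower bound via the Poisson representation formula.

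First, for antisymmetry, I would observe that since $\varphi$ is odd, the boundary datum $x \mapsto \varphi(x_1)$ is itself antisymmetric. Consider the function $\tilde{\varphi}(x) := -\varphi^{(3)}(x_\ast)$. A direct change of variables shows $(-\Delta)^s \tilde{\varphi} = 0$ in $B_1$ and $\tilde{\varphi}(x) = -\varphi(-x_1) = \varphi(x_1)$ in $\R^n \setminus B_1$. Hence $\tilde{\varphi}$ and $\varphi^{(3)}$ solve the same Dirichlet problem in the class $C^s(\R^n) \cap L^\infty(\R^n)$, and by uniqueness for this problem we conclude $\varphi^{(3)} \equiv \tilde{\varphi}$, i.e., $\varphi^{(3)}$ is antisymmetric.

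Second, for the upper bound, since $\varphi^{(3)}$ is bounded and $s$-harmonic in $B_1$, standard interior regularity (e.g., \cite[Proposition~2.5]{MR2270163}) gives $\varphi^{(3)} \in C^\infty(B_{3/4})$ with
\begin{align*}
\| \varphi^{(3)} \|_{C^1(\overline{B_{3/4}})} \leqslant C \| \varphi^{(3)} \|_{L^\infty(\R^n)} \leqslant C
\end{align*}
for some $C$ depending only on $n$ and $s$. By antisymmetry, $\varphi^{(3)}(0,x') = 0$ whenever $(0,x') \in B_1$, so for $x \in B_{1/2}^+$ the fundamental theorem of calculus yields
\begin{align*}
\varphi^{(3)}(x) = \int_0^{x_1} \partial_1 \varphi^{(3)}(t,x') \, dt \leqslant \| \partial_1 \varphi^{(3)} \|_{L^\infty(B_{3/4})} \, x_1 \leqslant C x_1.
\end{align*}

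Third, for the lower bound, I would use the Poisson representation formula for $(-\Delta)^s$ in $B_1$, which asserts
\begin{align*}
\varphi^{(3)}(x) = \int_{\R^n \setminus B_1} P(x,y) \varphi(y_1) \, dy, \qquad P(x,y) = c_{n,s}' \left( \frac{1 - |x|^2}{|y|^2 - 1} \right)^s \frac{1}{|x-y|^n}.
\end{align*}
Splitting into $\R^n_+$ and $\R^n_-$, changing variables $y \mapsto y_\ast$ in the second piece, and using $\varphi(-t) = -\varphi(t)$, I obtain
\begin{align*}
\varphi^{(3)}(x) = \int_{\R^n_+ \setminus B_1^+} \big( P(x,y) - P(x,y_\ast) \big) \varphi(y_1) \, dy.
\end{align*}
Since $|y| = |y_\ast|$, the kernel difference simplifies to
\begin{align*}
P(x,y) - P(x,y_\ast) = c_{n,s}' \bigg( \frac{1-|x|^2}{|y|^2-1} \bigg)^s \bigg( \frac{1}{|x-y|^n} - \frac{1}{|x-y_\ast|^n} \bigg),
\end{align*}
and the last factor can be bounded below by $C \, x_1 y_1 / |x-y_\ast|^{n+2}$ by exactly the same Taylor-type argument used to derive \eqref{buKHzlE6}. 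Restricting the integration to a bounded region such as $A := \{ y \in \R^n_+ \colon 3 < y_1 < 4,\ |y'| < 1 \}$, on which $\varphi(y_1) \equiv 1$ and on which $1-|x|^2$, $|y|^2 - 1$, $y_1$, $|x - y_\ast|$ are all bounded above and below by positive constants uniformly in $x \in B_{1/2}^+$, the integrand is bounded below by $C\, x_1$, and integrating over $A$ yields $\varphi^{(3)}(x) \geqslant C^{-1} x_1$ on $B_{1/2}^+$.

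The main obstacle is the lower bound: verifying that the Poisson representation is applicable to a merely bounded boundary datum and then extracting the $x_1$-linear decay cleanly. However, the Poisson formula for the fractional Laplacian in a ball is classical (and is invoked again in Appendix~\ref{4CEly}), and the kernel difference estimate is essentially a reprise of \eqref{LxZU6}--\eqref{buKHzlE6}; thus once the representation is in hand, the estimate is a direct computation on a fixed bounded region away from the singularity.
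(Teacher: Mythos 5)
Your proposal is correct. The antisymmetry argument and the lower bound follow essentially the paper's route: the paper also derives antisymmetry from the antisymmetrised Poisson representation (and explicitly notes uniqueness as an alternative), and its lower bound is exactly your computation --- the kernel difference is bounded below via the analogue of~\eqref{buKHzlE6} with $n+2s$ replaced by $n$, and the integration is restricted to a region where $\varphi\equiv1$ (the paper uses the unbounded set $\{y_1>2\}$, you use a bounded box; both work since the integrand is nonnegative on all of $\R^n_+\setminus B_1^+$). Where you genuinely diverge is the upper bound. The paper stays inside the Poisson representation: it bounds the kernel difference above by $Cx_1y_1(\vert y\vert^2-1)^{-s}\vert x-y\vert^{-n-2}$ via the analogue of~\eqref{LxZU6} and then splits the exterior integral into $B_2^+\setminus B_1^+$ and $\R^n_+\setminus B_2^+$, checking convergence of each piece. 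You instead invoke interior regularity for bounded $s$-harmonic functions to get $\|\varphi^{(3)}\|_{C^1(\overline{B_{3/4}})}\leqslant C\|\varphi^{(3)}\|_{L^\infty(\R^n)}\leqslant C$ (the last bound by the maximum principle), and then integrate $\partial_1\varphi^{(3)}$ from the symmetry plane, where $\varphi^{(3)}$ vanishes. Both are sound; your version is shorter, avoids the near/far splitting of the exterior integral, and would survive replacing the explicit Poisson kernel by any operator with comparable interior estimates, while the paper's version keeps the whole lemma as one self-contained kernel computation and never needs the $C^1$ interior theory. The only point to make explicit in a final write-up is that the segment from $(0,x')$ to $x$ lies in $B_{1/2}\subset B_{3/4}$ for $x\in B_{1/2}^+$, so the fundamental theorem of calculus step is legitimate.
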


\begin{proof}
Via the Poisson kernel representation, see \cite[Section 15]{MR3916700}, and using that \(\varphi\) is an odd function, we may write \begin{align*}
\varphi^{(3)}(x)  &= C \int_{\R^n \setminus B_1} \bigg (  \frac{1-\vert x \vert^2}{\vert y \vert^2-1}\bigg )^s \frac{\varphi(y_1)}{\vert x -y \vert^n} \dd y  \\
&= C \int_{\R^n_+ \setminus B_1^+} \bigg (  \frac{1-\vert x \vert^2}{\vert y \vert^2-1}\bigg )^s \bigg ( \frac 1 {\vert x -y \vert^n} - \frac 1 {\vert x_\ast -y \vert^n}\bigg )  \varphi(y_1)\dd y.
\end{align*} {F}rom this formula, we immediately obtain that \(\varphi^{(3)}\) is antisymmetric (this can also be argued by the uniqueness of solutions to~\eqref{pa7rDaw7}). Then, by an analogous computation to~\eqref{LxZU6} (just replacing~$n+2s$
with~$n$), \begin{align*}
\varphi^{(3)}(x) &\leqslant C  x_1 \int_{\R^n_+ \setminus B_1^+}  \frac{ y_1\varphi(y_1)}{(\vert y \vert^2-1)^s\vert x-y \vert^{n+2}}  \dd y \\
&\leqslant  C  x_1 \left[\int_{B_2^+ \setminus B_1^+}  \frac{ y_1}{(\vert y \vert^2-1)^s }  \dd y
+\int_{\R^n_+ \setminus B_2^+}  \frac{ y_1}{(\vert y \vert^2-1)^s(\vert y \vert - 1)^{n+2}}  \dd y\right] \\
&\leqslant Cx_1
\end{align*} for all \(x\in B_{1/2}^+\). Similarly, using now~\eqref{buKHzlE6} (replacing~$n+2s$
with~$n$), we have that
\begin{align*}
\varphi^{(3)}(x) &\geqslant  C x_1 \int_{\R^n_+ \setminus B_1^+}   \frac{ y_1\varphi(y_1)}{(\vert y \vert^2-1)^s\vert x_\ast -y \vert^{n+2}} \dd y \\
&\geqslant C x_1 \int_{\{y_1>2\}}   \frac{ y_1}{(\vert y \vert^2-1)^s(\vert y \vert +1)^{n+2}} \dd y \\
&\geqslant C x_1
\end{align*} for all \(x\in B_{1/2}^+\). 
\end{proof}

Now we can give the proof of \thref{EP5Elxbz}. 

\begin{proof}[Proof of \thref{EP5Elxbz}]
Dividing through by \(\Anorm{u}+M\), we can also assume that \((-\Delta)^s u + cu \leqslant x_1\) in \(B_\rho^+\) and \(\Anorm{u}\leqslant 1\).  Moreover, as explained at the start of the proof of \thref{g9foAd2c}, via \thref{tZVUcYJl} (ii) (after rescaling), it is not restrictive to assume \(u \in C^\infty_0(\R^n)\) and it is antisymmetric. 

Furthermore, we point out that the claim in \thref{EP5Elxbz} is obviously true if~$u\le0$
in~$B^+_\rho$, hence we suppose that~$\{u>0\}\cap B^+_\rho\ne\varnothing$.

Let \(\varphi^{(3)}\) be as in \thref{KaMmndyO} and let \(\zeta(x) := \varphi^{(3)}(x/(2\rho)) \). Suppose that \(\tau \geqslant 0\) is the smallest value such that \begin{align*}
u(x) &\leqslant \tau \zeta (x) (\rho - \vert x \vert )^{-n-2} \qquad \text{in } B_\rho^+. 
\end{align*}
The existence of such a \(\tau\) follows from a similar argument to the one presented in the footnote at the bottom of p. 11. Notice that~$\tau>0$.
To complete the proof we will show that \(\tau \leqslant C_\rho\) with \(C_\rho\) independent of \(u\).  Since \(u\) is continuously differentiable, two possibilities can occur: \begin{itemize}
\item[Case 1:] There exists \(a \in B_\rho^+\) such that \begin{align*}
u(a) &= \tau \zeta (a) (\rho - \vert a \vert )^{-n-2}.
\end{align*} 
\item[Case 2:] There exists \(a\in B_\rho \cap \{x_1=0\}\) such that \begin{align*}
\partial_1 u(a) = \tau \partial_1 \big \vert_{x=a} \big ( \zeta (x) (\rho - \vert x \vert )^{-n-2} \big )= \tau (\partial_1 \zeta (a)) (\rho - \vert a \vert )^{-n-2}  .
\end{align*} 
\end{itemize}
Let \(d:= \rho - \vert a\vert \) and define \(U\subset B_\rho^+\) as follows: if Case 1 occurs let \begin{align*}
U := \bigg \{ x \in B_\rho^+ \text{ s.t. } \frac{u(x)}{\zeta(x)} > \frac{u(a)}{2\zeta (a)} \bigg \};
\end{align*} otherwise if Case 2 occurs then let \begin{align*}
U:=\bigg \{ x \in B_\rho^+ \text{ s.t. }  \frac{u(x)}{\zeta(x)} > \frac{\partial_1u(a)}{2 \partial_1\zeta (a)} \bigg \}. 
\end{align*} Since  \(u(a) = \tau \zeta(a) d^{-n-2}\) in Case 1 and \(\partial_1u(a) = \tau \partial_1\zeta(a) d^{-n-2}\) in Case 2, we may write \begin{equation}\label{ei395v7b865998cn754mx984zUUU}
U =  \bigg \{ x \in B_\rho^+ \text{ s.t. } u(x)> \frac 1  2 \tau d^{-n-2} \zeta(x) \bigg \}
\end{equation}which is valid in both cases.

Then, we have that, for all \(r\in(0,d)\),
\begin{align*}
C_\rho &\geqslant \int_{B_\rho^+} y_1| u(y)| \dd y \geqslant \frac 1  2 \tau d^{-n-2} \int_{U \cap B_r(a)} y_1 \zeta(y) \dd y .
\end{align*} 
As a consequence, by \thref{KaMmndyO}, we have that \begin{align}
\int_{U \cap B_r(a)} y_1^2 \dd y 
\le C_\rho\int_{U \cap B_r(a)} y_1 \zeta(y) \dd y 
\leqslant \frac{ C_\rho d^{n+2} } \tau  \qquad \text{for all } r\in(0,d). \label{TTWpDkie}
\end{align} Next, we make the following claim. 

\begin{claim}
There exists~$\theta_0\in(0,1)$ depending only on \(n\), \(s\), \(\| c\|_{L^\infty(B_\rho(e_1))}\), and \(\rho\) such that
if~$\theta\in(0,\theta_0]$ there exists~\(C>0\)
depending only on \(n\), \(s\), \(\| c\|_{L^\infty(B_\rho(e_1))}\), \(\rho\), and~$\theta$ such that
\begin{itemize}
\item In Case 1: 
\begin{enumerate}[(i)]
\item If \(a_1 \geqslant   \theta d/16 \) then \begin{align*}
\big  \vert B_{(\theta d)/64}(a) \setminus U  \big \vert   &\leqslant \frac 1 4 \big  \vert B_{(\theta d)/64}  \big \vert  +\frac{C d^n} \tau .
\end{align*}
\item If \(a_1 <   \theta d/16 \) then \begin{align*}
\int_{B_{(\theta d)/64}^+(a) \setminus U } x_1^2 \dd x &\leqslant  \frac 1 4 \int_{B_{(\theta d)/64}^+(a) } x_1^2 \dd x + \frac{C d^{n+2}}\tau .
\end{align*}
\end{enumerate} 
\item In Case 2:  \begin{align*}
\int_{B_{(\theta d)/64}^+(a)\setminus U } x_1^2\dd x &\leqslant\frac 1 4 \int_{B_{(\theta d)/64}^+ (a)} x_1^2 \dd x + \frac{ C d^{n+2}}\tau.
\end{align*}
\end{itemize} In particular, neither~\(\theta\) nor~\(C\) depend on \(\tau\), \(u\), or \(a\).
\end{claim}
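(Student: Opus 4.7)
The proof of the Claim adapts the scheme used in the Claim inside \thref{guDQ7}: I would construct an antisymmetric barrier $v$ that sits above $u$ on $B_{\theta d/2}^+(a)$, reflect it to a non-negative antisymmetric function $w$ on $\R^n_+$, and feed $w$ into the appropriate weak Harnack inequality. The role of the case split is that the boundary weak Harnack \thref{g9foAd2c} naturally controls the weighted integral $\int y_1 w\,dy$, hence the weighted measure $x_1^2\,dx$, whereas the interior rescaled weak Harnack \thref{YLj1r} controls standard Lebesgue measure. I would apply the latter in Case 1(i), where $a_1\ge \theta d/16$ keeps $B_{\theta d/64}(a)$ a definite distance from $\{x_1=0\}$ and $\zeta \asymp a_1$ throughout the ball, and the former in Case 1(ii) and Case 2, where the ball touches or nearly touches the symmetry plane.

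For the construction, set $K:=(1-\theta/2)^{-n-2}\tau d^{-n-2}$ and $v:=K\zeta - u$. The minimality of $\tau$ combined with $|x|\le \rho-(1-\theta/2)d$ on $B_{\theta d/2}(a)$ yields $u(x)\le K\zeta(x)$ there, so $v\ge 0$ in $B_{\theta d/2}^+(a)$. Rescaling \thref{KaMmndyO} gives $(-\Delta)^s\zeta \equiv 0$ in $B_{2\rho}$ and $\zeta(x)\le C_\rho x_1$ in $B_{1/2}$, which together with the hypothesis $(-\Delta)^s u + cu \le x_1$ yields
\[
(-\Delta)^s v + cv \ge -x_1 - \|c\|_{L^\infty(B_\rho^+)}K\zeta \ge -C_\rho(1+K)x_1 \quad \text{in } B_{\theta d/2}^+(a).
\]
Now extend $v$ by setting $w:=v^+$ on $\R^n_+$ and reflecting antisymmetrically: then $w\equiv v$ on $B_{\theta d/2}^+(a)$, and $0\le w-v\le |u|$ throughout, with $w-v\equiv 0$ on $B_{\theta d/2}^+(a)$. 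Using $\Anorm{u}\le 1$ together with \thref{ltKO2}, exactly as in the proof of \thref{guDQ7}, gives $(-\Delta)^s(w-v)(x)\ge -C(\theta d)^{-n-2s-2}x_1$ in $B_{\theta d/4}^+(a)$, and hence
\[
(-\Delta)^s w + cw \ge -C_\rho\bigl(1+K+(\theta d)^{-n-2s-2}\bigr)x_1 \quad \text{in } B_{\theta d/4}^+(a).
\]

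To conclude, note that on $B_{\theta d/64}^+(a)\setminus U$ the definition \eqref{ei395v7b865998cn754mx984zUUU} forces $v\ge \bigl((1-\theta/2)^{-n-2}-\tfrac12\bigr)\tau d^{-n-2}\zeta$, which for $\theta$ small is at least $c\tau d^{-n-2}\zeta$, and \thref{KaMmndyO} turns this into $v \ge c\tau d^{-n-2} x_1$ (and in Case 1(i) into $v\ge c\tau d^{-n-2}a_1$). In Case 1(i), translate and rescale so that $B_{\theta d/8}(a)$ is mapped to $B_1(e_1)$ and apply \thref{YLj1r} to $w$: the resulting $L^1$-mean of $w$ is bounded above by a constant times $w(a)+\theta^{2s}u(a)+\tau\cdot(\text{small source term})$, while the integral lower bound over $B_{\theta d/64}^+(a)\setminus U$ reads $c\tau d^{-n-2}a_1\,|B_{\theta d/64}^+(a)\setminus U|$. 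Using $w(a)=((1-\theta/2)^{-n-2}-1)u(a)$, $u(a)\asymp \tau a_1 d^{-n-2}$, and choosing $\theta$ small enough that $((1-\theta/2)^{-n-2}-1)+C\theta^{2s}\le 1/4$ produces the Case 1(i) estimate. In Case 1(ii) and Case 2, translate and rescale so that $B_{\theta d/8}^+(a)$ is sent to $B_1^+$ and apply the boundary weak Harnack \thref{g9foAd2c} to $w$; in Case 2, the infimum $\inf w/y_1$ is realised (after translation to the origin) via the one-sided derivative along $e_1$, which in turn is controlled through $\partial_1\zeta(a)$ and \thref{6fD34E6w}. The lower bound $v\ge c\tau d^{-n-2}y_1$ yields $\int_{B_{\theta d/64}^+(a)\setminus U} y_1 w \ge c\tau d^{-n-2}\int_{B_{\theta d/64}^+(a)\setminus U} y_1^2$, and the same absorption of the pre-factor gives the weighted estimate.

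The main obstacle is balancing the three error terms in the lower bound for $(-\Delta)^s w + cw$: the base source $-x_1$, the zeroth-order perturbation $-\|c\|_\infty K\zeta$, and the tail error $-C(\theta d)^{-n-2s-2}x_1$ coming from the reflection. After the weak Harnack is applied, all three must be absorbed into $Cd^n/\tau$ (or $Cd^{n+2}/\tau$ in the weighted case); this forces a careful choice of $\theta$ so that $((1-\theta/2)^{-n-2}-1)+C\theta^{2s}\le 1/4$, while the $(\theta d)^{-n-2s-2}$ tail is tamed by the $(\theta d)^{2s}$ scaling factor produced by \thref{YLj1r} or \thref{g9foAd2c}.
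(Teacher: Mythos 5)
Your overall strategy is the same as the paper's: construct the barrier $v=K\zeta-u$ with $K=(1-\theta/2)^{-n-2}\tau d^{-n-2}$, reflect its positive part to an antisymmetric non-negative $w$, control the tail correction $(-\Delta)^s(w-v)$ via \thref{ltKO2}, and then plug $w$ into \thref{YLj1r} in the ``interior-like'' regime and \thref{g9foAd2c} when the ball meets the plane. Case 1(i), Case 2, and the absorption of the error terms via the smallness of $\theta$ all match the paper's argument in substance.

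The gap is in Case 1(ii). You propose to ``translate and rescale so that $B_{\theta d/8}^+(a)$ is sent to $B_1^+$ and apply \thref{g9foAd2c}.'' This does not work: in Case 1(ii) one has $0<a_1<\theta d/16$, so $a\notin\{x_1=0\}$ and $B_{\theta d/8}^+(a)$ is not a half-ball centred on the symmetry plane. Any rescaling that maps it to $B_1^+$ necessarily shifts or distorts the plane $\{x_1=0\}$, and then $w$ is no longer antisymmetric with respect to the relevant plane, so \thref{g9foAd2c} is not applicable. The paper instead keeps the natural antisymmetry-preserving rescaling $\tilde w(x)=w(a_1x)$ (the same one used in Case 1(i), sending $a\mapsto e_1$), and then chooses a carefully tuned radius $R=\tfrac12\big(\sqrt{(\theta d/(4a_1))^2-1}+2\big)$ so that both $e_1\in B_{R/2}^+$ (to control $\inf_{B_{R/2}^+}\tilde w/x_1$ by $\tilde w(e_1)$) and $B_R^+\subset B_{\theta d/(4a_1)}^+(e_1)$ (so the equation for $\tilde w$ holds there); then it establishes the inclusion $B_{(\theta d)/64}^+(a)\setminus U\subset B_{a_1R/2}^+$ to relate the half-ball where \thref{g9foAd2c} gives an integral bound to the set you actually need to estimate. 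None of these steps is addressed in your sketch, and without them the conclusion in Case 1(ii) does not follow. Separately, in Case 2 the bound $\inf_{B^+}w/x_1\le\partial_1 w(0)$ is a direct consequence of differentiability and $w(0)=0$; \thref{6fD34E6w} concerns $\lim_{h\to0}(-\Delta)^sv(he_1)/h$ and is not what is needed at this point.
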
 

We withhold the proof of the claim until the end. Assuming that
the claim is true, we complete the proof of \thref{EP5Elxbz} as follows.
 
 If Case 1(i) occurs then for all \(y\in B_{(\theta_0 d)/64} (a)\)
 we have that~\(y_1>a_1-( \theta_0 d)/64 \geqslant C d\), and so \begin{align*}
\int_{U \cap B_{(\theta_0 d)/64}(a)} y_1^2 \dd y \geqslant C d^2 \cdot \big \vert U \cap B_{(\theta_0 d)/64}(a) \big \vert .
\end{align*} Hence, from~\eqref{TTWpDkie} (used here with~$r:=(\theta_0d)/64$), we have that \begin{align*}
\vert U \cap B_{( \theta_0 d)/64}(a) \big \vert \leqslant \frac{C_\rho d^n}{\tau}. 
\end{align*}
Then using the claim, we find that \begin{align*}
\frac{C_\rho d^n}{\tau} \geqslant \big  \vert B_{(\theta_0 d)/64} \big \vert  - \big  \vert B_{(\theta_0 d)/64}(a) \setminus U  \big \vert 
\geqslant  \frac 3 4 \big  \vert B_{(\theta_0 d)/64}  \big \vert  -\frac{C d^n} \tau 
\end{align*}which gives that \(\tau \leqslant C_\rho\) in this case.

If Case 1(ii) or Case 2 occurs then from~\eqref{TTWpDkie} (used here with~$r:=(\theta_0d)/64$)
and the claim, we have that \begin{equation}\label{dewiotbv5748976w4598ty}
\frac{C_\rho d^{n+2}}{\tau} \geqslant \int_{B_{(\theta_0 d)/64}^+(a) } x_1^2 \dd x - \int_{B_{( \theta_0 d)/64}^+(a) \setminus U } x_1^2 \dd x 
\geqslant \frac 3 4 \int_{B_{( \theta_0 d)/64}^+(a) } x_1^2 \dd x - \frac{C d^{n+2}}\tau .
\end{equation}
We now observe that, given~$r\in(0,d)$, if~$x\in B_{r/4}\left(a+\frac34 re_1\right)\subset B^+_{r}(a)$
then~$x_1\ge a_1+\frac34 r-\frac{r}4\ge\frac{r}2$,
and thus
\begin{equation}\label{sdwet68980poiuytrkjhgfdmnbvcx23456789}
\int_{B_{r}^+(a) } x_1^2 \dd x\ge
\int_{B_{r/4}(a+(3r)/4 e_1) } x_1^2 \dd x\ge \frac{r^2}4\,|B_{r/4}(a+(3r)/4 e_1) |=C r^{n+2},\end{equation}
for some~$C>0$ depending on~$n$. Exploiting this formula with~$r:=(\theta_0 d)/64$ into~\eqref{dewiotbv5748976w4598ty},
we obtain that
$$ \frac{C_\rho d^{n+2}}{\tau} \geqslant C \left(\frac{ \theta_0 d}{64}\right)^{n+2 }- \frac{C d^{n+2}}\tau,$$
which gives that \(\tau \leqslant C_\rho\) as required.

Hence, we now focus on the proof of the claim. 
Let \(\theta\in(0,1)\) be a small constant to be chosen later. By translating, we may also assume without loss of generality that \(a'=0\) (recall that~\(a'=(a_2,\dots,a_n)\in \R^{n-1}\)).

For each \(x\in B_{\theta d/2}^+(a)\), we have that~\(\vert x \vert \leqslant \vert a \vert +\theta d/2=\rho- (1-\theta/2)d\). Hence, in both Case 1 and Case 2, 
\begin{align}
u(x) \leqslant \tau d^{-n-2}   \bigg (1- \frac \theta 2 \bigg )^{-n-2}\zeta (x) \qquad \text{in }B_{\theta d/2}^+(a). \label{ln9LcJTh}
\end{align} Let \begin{align*}
v(x):=  \tau d^{-n-2}   \bigg (1- \frac \theta 2 \bigg )^{-n-2}\zeta (x)-u(x)  \qquad \text{for all } x\in \R^n. 
\end{align*} We have that \(v\) is antisymmetric and \(v\geqslant 0\) in \(B_{\theta d/2}^+(a) \) due to~\eqref{ln9LcJTh}. Moreover, since \(\zeta\) is \(s\)-harmonic in \(B_\rho^+ \supset B_{\theta d/2}^+(a)\), for all \(x\in B_{\theta d/2}^+(a)\), \begin{align*}
(-\Delta)^s v(x) +c(x)v(x) &= -(-\Delta)^s u(x) - c(x) u(x) +c(x) \tau d^{-n-2} \bigg (1- \frac \theta 2 \bigg )^{-n-2}\zeta (x)\\
&\geqslant -x_1 -C  \tau d^{-n-2} \|c^-\|_{L^\infty(B_\rho^+)} \bigg (1- \frac \theta 2 \bigg )^{-n-2}\zeta (x).
\end{align*} Taking \(\theta\) sufficiently small and using that \(\zeta (x)\leqslant C x_1\) (in light of~\thref{KaMmndyO}),
we obtain \begin{align}
(-\Delta)^s v(x) +c(x)v(x)&\geqslant -C \big ( 1 + \tau d^{-n-2}  \big )x_1  \qquad \text{in } B_{\theta d/2}^+(a). \label{ulzzcUwf}
\end{align}

Next, we define \(w(x):= v^+(x)\) for all \(x\in \R^n_+\) and \(w(x):= -w(x_\ast)\) for all \(x\in \overline{\R^n_-}\). 
We point out that, in light of~\eqref{ln9LcJTh}, $w$ is as regular as~$v$ in~$B_{\theta d/2}^+(a)$, and thus
we can compute the fractional Laplacian of~$w$ in~$B_{\theta d/2}^+(a)$ in a pointwise sense.

We also observe that \begin{align*}
(w-v)(x) (x) &=  \begin{cases}
0 &\text{if } x\in \R^n_+ \cap \{ v\geqslant 0\} ,\\
u(x)-\tau d^{-n-2} \big (1- \frac \theta 2 \big )^{-n-2}\zeta (x),&\text{if } x\in \R^n_+ \cap \{ v< 0\}.
\end{cases}
\end{align*} In particular, \(w-v\leqslant |u|\) in \(\R^n_+\).
Thus, for all \(x\in B_{\theta d/2}^+(a)\), \begin{align*}
(-\Delta)^s (w-v)(x)&\geqslant -C \int_{\R^n_+\setminus B_{\theta d/2}^+(a)} \left(\frac 1 {\vert x -y \vert^{n+2s}} - \frac 1 {\vert x_\ast -y \vert^{n+2s}} \right) |u(y)|\dd y .
\end{align*}  Moreover, by \thref{ltKO2}, for all~$x \in B_{\theta d/4}^+(a)$,
\begin{equation}
(-\Delta)^s (w-v)(x) \geqslant -C(\theta d)^{-n-2s-2} \Anorm{u}
x_1 \geqslant -C(\theta d)^{-n-2s-2}x_1 . \label{y8tE2pf9}
\end{equation} Hence, by~\eqref{ulzzcUwf} and~\eqref{y8tE2pf9}, we obtain  \begin{align}
(-\Delta)^s w +cw &=(-\Delta)^sv +cv +(-\Delta)^s(w-v) \nonumber \\
&\geqslant -C \bigg ( 1 + \tau d^{-n-2}  +(\theta d)^{-n-2s-2}  \bigg )x_1 \nonumber \\
&\geqslant -C \bigg (  (\theta d)^{-n-2s-2} + \tau d^{-n-2}  \bigg )x_1\label{ayFE7nQK}
\end{align} in \( B_{\theta d/4}^+(a)\).

Next, let us consider Case 1 and Case 2 separately.  

\emph{Case 1:} Suppose that \(a\in B_\rho^+\) and let \(\tilde w (x) = w (a_1 x)\) and \(\tilde c (x) = a_1^{2s} c (a_1 x)\). Then from~\eqref{ayFE7nQK}, we have that \begin{align}
(-\Delta)^s \tilde w(x) +\tilde c(x) \tilde w(x) &\geqslant - Ca_1^{2s+1} \bigg (  (\theta d)^{-n-2s-2} + \tau d^{-n-2}  \bigg )x_1 \label{zrghHZhX}
\end{align} for all \( x\in B_{\theta d/(4a_1)}^+(e_1)\). 

As in the proof of \thref{guDQ7}, we wish to apply the rescaled version of the weak Harnack inequality to \(\tilde w\); however, we cannot immediately apply either \thref{YLj1r} or \thref{g9foAd2c} to~\eqref{zrghHZhX}. To resolve this, let us split into a further two cases: (i) \(a_1 \geqslant   \theta d/16 \) and (ii) \(a_1<  \theta d/16\).

\emph{Case 1(i):} If \(a_1 \geqslant   \theta d/16 \) then \( B_{\theta d/(32a_1)}(e_1) \subset B_{\theta d/(4a_1)}^+(e_1)\) and for each \(x\in B_{\theta d/(32a_1)}(e_1)\) we have that~\(x_1<1+\theta d/(32a_1)\le1+1/4=5/4\). Therefore, from~\eqref{zrghHZhX}, we have that \begin{align*}
(-\Delta)^s \tilde w(x) +\tilde c(x) \tilde w(x) &\geqslant - Ca_1^{2s+1}  \big (  (\theta d)^{-n-2s-2} + \tau d^{-n-2}  \big ) 
\end{align*} for all \( x\in B_{\theta d/(32a_1)}(e_1)\).

On one hand, by~\thref{YLj1r} (used here with~$\rho:=\theta d/(32a_1)$), \begin{align*}&
\left( \frac{\theta d}{64} \right)^{-n} \int_{B_{\theta d/64}(a)}  w (x) \dd x \\= &\; \left( \frac{\theta d}{32a_1} \right)^{-n}  \int_{B_{\theta d/(64a_1)}(e_1)} \tilde w (x) \dd x \\
\leqslant&\; C \left(  \tilde w(e_1) +  a_1 (\theta d)^{-n-2} + \tau a_1 \theta^{2s} d^{-n+2s-2} \right) \\
\leqslant& \;C  \tau d^{-n-2} \bigg (  \bigg (1- \frac \theta 2 \bigg )^{-n-2}-1 \bigg ) a_1 +Ca_1(\theta d)^{-n-2} + C\tau a_1 \theta^{2s} d^{-n+2s-2}
\end{align*} using also \thref{KaMmndyO} and that \(u(a) = \tau d^{-n-2} \zeta(a)\). 

On the other hand, by the definition of~$U$ in~\eqref{ei395v7b865998cn754mx984zUUU},
\begin{align}
B_r(a) \setminus U  &\subset \left\{ \frac{w}{\zeta} > \tau d^{-n-2} \left(  \left(1 - \frac \theta 2 \right)^{-n-2} -\frac 1 2 \right)  \right\} \cap B_r(a), \qquad \text{ for all } r\in\left(0,\frac12\theta d\right),\label{I9FvOxCz}
\end{align} and so \begin{align*}
(\theta d)^{-n} \int_{B_{\theta d/64}(a)}  w(x) \dd x &\geqslant \tau \theta^{-n} d^{-2n-2} \bigg ( \bigg (1 - \frac \theta 2 \bigg )^{-n-2} -\frac 1 2 \bigg ) \int_{B_{\theta d/64}(a) \setminus U }  \zeta (x) \dd x \\
&\geqslant  C \tau \theta^{-n} d^{-2n-2} \int_{B_{\theta d/64}(a) \setminus U } \zeta (x) \dd x
\end{align*} for \(\theta\) sufficiently small. Moreover, using that \(x_1>a_1-\theta d/64>Ca_1\) and \thref{KaMmndyO}, we have that\begin{align*}
\int_{B_{\theta d/64}(a) \setminus U } \zeta (x) \dd x \geqslant C\int_{B_{\theta d/64}(a) \setminus U } x_1 \dd x \geqslant C
a_1\cdot \big  \vert B_{\theta d/64}(a) \setminus U  \big \vert . 
\end{align*}
Thus,\begin{align*}
\big  \vert B_{\theta d/64}(a) \setminus U  \big \vert   &\leqslant C  (\theta d)^n \bigg (  \bigg (1- \frac \theta 2 \bigg )^{-n-2}-1 \bigg )  +\frac{C\theta^{-2} d^n} \tau + C (\theta d)^{n+2s} \\
&\leqslant  C  (\theta d)^n \bigg (  \bigg (1- \frac \theta 2 \bigg )^{-n-2}-1 +\theta^{2s} \bigg )  +\frac{C\theta^{-2} d^n} \tau ,
\end{align*} using also that \(d^{n+2s}<d^n\). 

Finally, we can take \(\theta\) sufficiently small so that \begin{align*}
C (\theta d)^n \bigg (  \bigg (1- \frac \theta 2 \bigg )^{-n-2}-1 +\theta^{2s} \bigg ) \leqslant \frac 1 4 \big \vert B_{\theta d/64} \big \vert 
\end{align*}
which gives
$$ \big  \vert B_{\theta d/64}(a) \setminus U  \big \vert\le \frac14 \big \vert B_{\theta d/64} \big \vert
 +\frac{Cd^n} \tau.
$$
This concludes the proof of the claim in Case 1(i). 

\emph{Case 1(ii):} Let \(a_1<  \theta d/16\) and fix \(R:= \frac 1 {2} \big ( \sqrt{ ( \theta d/(4a_1))^2-1} +2 \big )\). Observe that \begin{align*}
2 < R< \sqrt{ \left(\frac{ \theta d}{4a_1}\right)^2-1}.
\end{align*} Hence, \(e_1 \in B_{R/2}^+\). Moreover, if \(x\in B_R^+\) then \begin{align*}
\vert x -e_1\vert^2 <1+R^2< ( \theta d/(4a_1))^2,
\end{align*} so \(B_R^+ \subset B_{\theta d/(4a_1)}^+(e_1)\). 

Thus, applying \thref{g9foAd2c} to the equation in~\eqref{zrghHZhX} in \(B_R^+\), we obtain \begin{align*}&
a_1^{-n-1}\bigg ( \frac R 2\bigg )^{-n-2} \int_{B_{a_1R/2}^+} x_1  w(x) \dd x \\
=&\;  \bigg ( \frac R 2\bigg )^{-n-2} \int_{B_{R/2}^+} x_1 \tilde w(x) \dd x \\
\leqslant&\; C \left( \inf_{B_{R/2}^+} \frac{\tilde w(x)}{x_1} +  a_1^{2s+1}R^{2s}  (\theta d)^{-n-2s-2} + \tau d^{-n-2}a_1^{2s+1}R^{2s}   \right) \\
\leqslant\;& C \tau d^{-n-2}   \left( \left(1- \frac \theta 2 \right)^{-n-2}-1 \right)\zeta(a)
+ C  a_1^{2s+1}R^{2s}  (\theta d)^{-n-2s-2} +C  \tau d^{-n-2}a_1^{2s+1}R^{2s}  
\end{align*} using that \(e_1 \in B_{R/2}^+\).

Since \(R \leqslant C \theta d/a_1\) and \(\zeta(a)\leqslant Ca_1\) by \thref{KaMmndyO}, it follows that
\begin{eqnarray*}&&
a_1^{-1} \bigg ( \frac R {2a_1} \bigg )^{-n} \int_{B_{a_1R/2}^+} x_1  w(x) \dd x \\
&&\qquad\leqslant C \tau d^{-n-2}   \bigg ( \bigg (1- \frac \theta 2 \bigg )^{-n-2}-1 \bigg )a_1  + C  a_1  (\theta d)^{-n-2} +C  \tau a_1 \theta^{2s} d^{-n+2s-2} .
\end{eqnarray*}

On the other hand, we claim that
\begin{equation}\label{skweogtry76t678tutoy4554yb76i78io896}
B_{(\theta d)/64}^+(a) \setminus U \subset B_{a_1R/2}^+ .\end{equation} Indeed,
if~$x\in B_{(\theta d)/64}^+ \setminus U $ then
$$ |x|\le |x-a|+|a|\le \frac{\theta d}{64} +a_1\le a_1\left(\frac{\theta d}{64a_1}+1\right).
$$
Furthermore,
\begin{eqnarray*}
&&R\ge   \frac 1 {2} \sqrt{ \left(\frac{\theta d}{4a_1}\right)^2-\left(\frac{\theta d}{16a_1}\right)^2} +1\ge
\frac{\sqrt{15}\theta d}{32a_1}+1\ge\frac{3\theta d}{32a_1}+1\\&&\qquad
=2\left(\frac{3\theta d}{64a_1}+\frac12\right)=2\left(\frac{\theta d}{64a_1}+\frac{\theta d}{32a_1}+\frac12\right)\ge
2\left(\frac{\theta d}{64a_1}+1\right).
\end{eqnarray*}
{F}rom these observations we obtain that if~$x\in B_{(\theta d)/64}^+ \setminus U $ then~$|x|\le a_1R/2$,
which proves~\eqref{skweogtry76t678tutoy4554yb76i78io896}.

Hence, by~\eqref{I9FvOxCz} (used with~$r:=\theta d/64$), \eqref{skweogtry76t678tutoy4554yb76i78io896}, and \thref{KaMmndyO}, we have that \begin{align*}
a_1^{-1} \bigg ( \frac R {2a_1} \bigg )^{-n} \int_{B_{a_1R/2}^+} x_1  w(x) \dd x  &\geqslant a_1^{-n-1} R^{-n-2} \tau d^{-n-2} \bigg (  \bigg (1 - \frac \theta 2 \bigg )^{-n-2} -\frac 1 2 \bigg ) \int_{B_{ (\theta d)/64}^+(a) \setminus U} x_1  \zeta(x) \dd x \\
&\geqslant C \tau a_1 \theta^{-n-2}  d^{-2n-4} \int_{B_{ (\theta d)/64}^+(a) \setminus U } x_1^2 \dd x
\end{align*} for \(\theta\) sufficiently small. Thus, \begin{align*}
\int_{B_{( \theta d)/64}^+(a) \setminus U } x_1^2 \dd x &\leqslant C   (\theta d)^{n+2}   \bigg ( \bigg (1- \frac \theta 2 \bigg )^{-n-2}-1 \bigg ) + \frac{C d^{n+2}}\tau  +C( \theta  d)^{n+2s+2}\\
&\leqslant  C   (\theta d)^{n+2}   \bigg ( \bigg (1- \frac \theta 2 \bigg )^{-n-2}-1 +\theta^{2s} \bigg ) + \frac{C d^{n+2}}\tau  
\end{align*} using that \(d^{n+2s+2}<d^{n+2}\). 
Recalling formula~\eqref{sdwet68980poiuytrkjhgfdmnbvcx23456789}
and taking~\(\theta\) sufficiently small, we obtain that \begin{align*}
C(\theta d)^{n+2}   \bigg ( \bigg (1- \frac \theta 2 \bigg )^{-n-2}-1 +\theta^{2s} \bigg ) \leqslant \frac 1 4 \int_{B_{( \theta d)/64}^+ (a)} x_1^2 \dd x .
\end{align*} which concludes the proof in Case 1(b). 

\emph{Case 2:} In this case, we can directly apply \thref{g9foAd2c} to~\eqref{ayFE7nQK}. When we do this we find that \begin{align*}&
\bigg ( \frac{\theta d }{64} \bigg )^{-n-2} \int_{B_{\theta d/64}^+(a)} x_1  w (x)\dd x \\
\leqslant&\, C \bigg ( \partial_1  w (0)  +  (r\theta)^{-n-2} + \tau \theta^{2s} d^{-n+2s-2} \bigg )\\
=&\, C\tau d^{-n-2}  \bigg (   \bigg (1- \frac \theta 2 \bigg )^{-n-2}-1 \bigg )\partial_1\zeta(0) + C   (r\theta)^{-n-2} + C\tau \theta^{2s} d^{-n+2s-2}.
\end{align*} On the other hand,~\eqref{I9FvOxCz} is still valid in Case 2 so \begin{align*}
(\theta d )^{-n-2} \int_{B_{\theta d/64}^+(a)} x_1  w (x)\dd x &\geqslant  \tau  \theta^{-n-2} d^{-2n-4} \bigg (  \bigg (1 - \frac \theta 2 \bigg )^{-n-2} -\frac 1 2 \bigg ) \int_{B_{\theta d/64}^+(a)\setminus U } x_1 \zeta (x) \dd y \\
&\geqslant C \tau  \theta^{-n-2} d^{-2n-4} \int_{B_{\theta d/64}^+(a)\setminus U } x_1^2\dd x
\end{align*} using \thref{KaMmndyO} and taking \(\theta\) sufficiently small. Thus,  \begin{align*}
\int_{B_{\theta d/64}^+(a)\setminus U } x_1^2\dd x &\leqslant C (\theta d)^{n+2}  \bigg (   \bigg (1- \frac \theta 2 \bigg )^{-n-2}-1 \bigg ) + \frac{ C d^{n+2}}\tau + C (\theta d)^{n+2s+2}\\
&\leqslant  C (\theta d)^{n+2}  \bigg (   \bigg (1- \frac \theta 2 \bigg )^{-n-2}-1 +\theta^{2s} \bigg ) + \frac{ C d^{n+2}}\tau 
\end{align*} using that \(d^{n+2s+2}<d^{n+2}\). Then, by~\eqref{sdwet68980poiuytrkjhgfdmnbvcx23456789}, we may choose \(\theta\) sufficiently small so that \begin{align*}
 C (\theta d)^{n+2}  \bigg (   \bigg (1- \frac \theta 2 \bigg )^{-n-2}-1 +\theta^{2s} \bigg ) \leqslant \frac 1 4 \int_{B_{\theta d/64}^+(a)} x_1^2\dd x
\end{align*} which concludes the proof in Case 2. 

The proof of \thref{EP5Elxbz}
is thereby complete.
\end{proof}

\section{Proof of \protect\thref{Hvmln}} \label{lXIUl}

In this short section, we give the proof of \thref{Hvmln}. This follows from Theorems~\ref{DYcYH} and~\ref{C35ZH} along with a standard covering argument which we include here for completeness. 

\begin{proof}[Proof of \thref{Hvmln}] 
Recall that~\(\Omega^+=\Omega \cap \R^n_+\) and let \(B_{2R}(y)\) be a ball such that \(B_{2R}(y)\Subset\Omega^+\). We will first prove that there exists a constant \(C=C(n,s,R,y)\) such that \begin{align}
\sup_{B_R(y)} \frac{u(x)}{x_1} &\leqslant C \inf_{B_R(y)} \frac{u(x)}{x_1} .  \label{c4oOr}
\end{align}Indeed, if \(\tilde u (x) := u (y_1x+(0,y'))\) and~\(\tilde c (x):= y_1^{2s} c(y_1x+(0,y'))\) then  \begin{align*}
(-\Delta)^s \tilde u (x)+\tilde c \tilde u = 0, \qquad \text{in } B_{2R/y_1} (e_1). 
\end{align*} By \thref{DYcYH}, \begin{align*}
\sup_{B_R(y)} \frac{u(x)}{x_1}  \leqslant C \sup_{B_R(y)} u = C\sup_{B_{R/y_1}(e_1)} \tilde u  & \leqslant C \inf_{B_{R/y_1}(e_1)} \tilde u =C\inf_{B_R(y)} u \leqslant C\inf_{B_R(y)} \frac{u(x)}{x_1}
\end{align*} using that \(B_R(y) \Subset\R^n_+\).

Next, let \(\{a^{(k)}\}_{k=1}^\infty,\{b^{(k)}\}_{k=1}^\infty \subset \tilde \Omega^+\) be such that \begin{align*}
\frac{u(a^{(k)})}{a^{(k)}_1} \to \sup_{\Omega'}\frac{u(x)}{x_1} \quad \text{and} \quad \frac{u(b^{(k)})}{b^{(k)}_1} \to \inf_{\Omega'}\frac{u(x)}{x_1}
\end{align*} as \(k\to \infty\). After possibly passing to a subsequence, there exist~\(a\), \(b\in \overline{\tilde \Omega^+}\) such that~\(a^{(k)}\to a\) and~\(b^{(k)}\to b\). Let \(\gamma \subset \tilde \Omega^+\) be a curve connecting \(a\) and \(b\). By the Heine-Borel Theorem, there exists a finite collection of balls \(\{B^{(k)}\}_{k=1}^M\) with centres in \(\tilde \Omega \cap \{x_1=0\}\) such that \begin{align*}
\tilde \Omega \cap \{x_1=0\} \subset \bigcup_{k=1}^M B^{(k)}  \Subset\Omega . 
\end{align*} Moreover, if \( \tilde \gamma := \gamma \setminus \bigcup_{k=1}^M B^{(k)}\) then there exists a further collection of balls \(\{B^{(k)}\}_{k=M+1}^N\) with centres in \(\tilde \gamma\) and radii equal to \( \frac 12 \dist(\tilde \gamma, \partial( \Omega^+))\) such that \begin{align*}
\tilde \gamma \subset \bigcup_{k=M+1}^N B^{(k)}  \Subset \Omega^+.
\end{align*} By construction, \(\gamma\) is covered by \(\{B^{(k)}\}_{k=1}^n\). Thus, iteratively applying \thref{C35ZH} (after translating and rescaling) to each \(B^{(k)}\), \(k=1,\dots, M\), and~\eqref{c4oOr} to each \(B^{(k)}\), \(k=M+1,\dots, N\), we obtain the result. 
\end{proof} 

\appendix 
 
\section{A counterexample} \label{j4njb}
In this appendix, we demonstrate that \thref{Hvmln} is in general false if we do not assume antisymmetry. We will do this by constructing a sequence of functions \(\{ u_k\}_{k=1}^\infty \subset C^\infty(B_1(2e_1)) \cap L^\infty(\R^n)\) such that \((-\Delta)^su_k=0\) in \(B_1(2e_1)\), \(u_k \geqslant 0\) in \(\R^n_+\), and \begin{align*}
\frac{\sup_{B_{1/2}(2e_1)}u_k}{\inf_{B_{1/2}(2e_1)}u_k} \to + \infty  \qquad \text{as } k\to \infty.
\end{align*} The proof will rely on the mean value property of \(s\)-harmonic functions. 

Suppose that \(M\geqslant 0\) and \(\zeta_1\), \(\zeta_2\) are smooth functions such that \(0\leqslant \zeta_1,\zeta_2\leqslant 1\) in \(\R^n \setminus B_1(2e_1)\), and \begin{align*}
\zeta_1(x) = 
\begin{cases}
0 &\text{in } \R^n_+ \setminus B_1(2e_1),\\
1 &\text{in } \R^n_- \setminus \{x_1>-1\}
\end{cases} \qquad{\mbox{and}}\qquad \zeta_2(x) =\begin{cases}
0 &\text{in } \R^n_+ \setminus B_1(2e_1) ,\\
1 &\text{in } B_{1/2}(-2e_1).
\end{cases}
\end{align*}Then let \(v\) and \(w_M\) be the solutions to \begin{align*}
\begin{PDE}
(-\Delta)^s v &= 0 &\text{in }B_1(2e_1), \\
v&=\zeta_1 &\text{in } \R^n\setminus  B_1(2e_1)
\end{PDE}\qquad {\mbox{and}}\qquad \begin{PDE}
(-\Delta)^s w_M &= 0 &\text{in }B_1(2e_1), \\
w_M&=-M\zeta_2 &\text{in } \R^n\setminus  B_1(2e_1),
\end{PDE}
\end{align*} respectively. Both \(v\) and \(w_M\) are in \( C^\infty(B_1(2e_1)) \cap L^\infty(\R^n)\) owing to standard regularity theory. We want to emphasise that \(w_M\) depends on the parameter \(M\) (as indicated by the subscript) but \(v\) does not. Define \(\tilde u_M:= v+w_M\). Since \(w_M \equiv 0 \) when \(M=0\) and, by the strong maximum principle, \(v>0\) in \(B_1(2e_1)\), we have that \begin{align*}
\tilde u_0 >0 \qquad \text{in } B_1 (2e_1).
\end{align*} Hence, we can define \begin{align*}
\bar M := \sup \{ M \geqslant 0 \text{ s.t. } \tilde u_M >0 \text{ in } B_1 (2e_1)\}
\end{align*} though \(\bar M\) may possibly be infinity. We will show that \(\bar M\) is in fact finite and that \begin{align}
\inf_{B_1(2e_1)} \tilde u_{\bar M} = 0. \label{fDMti}
\end{align} Once these two facts have been established, we complete the proof as follows: set \(u_k := \tilde u_{\bar M - 1/k}\). By construction, \(u_k \geqslant 0\) in \(\R^n_+\) and \((-\Delta)^su_k=0\) in \(B_1(2e_1)\). Moreover, by the maximum principle, \(v\leqslant 1\) and \(w_M \leqslant 0\) in \(\R^n\), so \( u_k \leqslant 1\) in \(\R^n\). Hence, \begin{align*}
\frac{\sup_{B_{1/2}(2e_1)}u_k}{\inf_{B_{1/2}(2e_1)}u_k} &\leqslant \frac 1 {\inf_{B_1(2e_1)}u_k} \to + \infty
\end{align*} as \(k \to \infty\). 

Let us show that \(\bar M\) is finite. Since \(w_M\) is harmonic in \(B_1(2e_1)\), the mean value property for \(s\) harmonic functions, for example see \cite[Section 15]{MR3916700}, tells us that\begin{align*}
w_M(2e_1) &= -CM \int_{\R^n \setminus B_1(2e_1)} \frac{\zeta_2(y)}{(\vert y \vert^2 -1 )^s \vert y \vert^n} \dd y .
\end{align*} Then, since \(\zeta_2\equiv 1\) in \(B_{1/2}(-2e_1)\) and \(\zeta_2\geqslant0\), \begin{align*}
\int_{\R^n \setminus B_1(2e_1)} \frac{\zeta_2(y)}{(\vert y \vert^2 -1 )^s \vert y \vert^n} \dd y &\geqslant \int_{B_{1/2}(-2e_1)} \frac{\dd y}{(\vert y \vert^2 -1 )^s \vert y \vert^n}\geqslant C.  
\end{align*} It follows that \(w_M(2e_1) \leqslant -CM\) whence \begin{align*}
\tilde u_M (2e_1) \leqslant 1-CM \leqslant 0
\end{align*} for \(M\) sufficiently large. This gives that \(\bar M\) is finite.

Now we will show \eqref{fDMti}. For the sake of contradiction, suppose that there exists \(a>0\) such that \(\tilde u_{\bar M } \geqslant a \) in \(B_1(2e_1)\). Suppose that \(\varepsilon>0\) is small and let \(h^{(\varepsilon)} := \tilde u_{\bar M+\varepsilon}-\tilde u_{\bar M} = w_{\bar M+\varepsilon}-w_{\bar M}\). Then \(h^{(\varepsilon)}\) is \(s\)-harmonic in \(B_1(2e_1)\) and \begin{align*}
h^{(\varepsilon)}(x) = - \varepsilon \zeta_2(x) \geqslant -\varepsilon \qquad \text{in } \R^n \setminus B_1(2e_1).
\end{align*}Thus, using the maximum principle we conclude that \(h^{(\varepsilon)} \geqslant -\varepsilon\) in \(B_1(2e_1)\),
which in turn gives that \begin{align*}
\tilde u_{\bar M+\varepsilon} =\tilde u +h^{(\varepsilon)}  \geqslant a-\varepsilon >0 \qquad \text{in } B_1(2e_1)
\end{align*} for \(\varepsilon\) sufficiently small. This contradicts the definition of \(\bar M\). 

\section{Alternate proof of \protect\thref{C35ZH} when \(c\equiv0\)} \label{4CEly}

In this appendix, we will provide an alternate elementary proof of \thref{C35ZH} in the particular case \(c\equiv0\) and \(u\in \mathscr L_s(\R^n)\). More precisely, we prove the following.

\begin{thm} \thlabel{oCuv7Zs2}
Let \(u\in C^{2s+\alpha}(B_1) \cap \mathscr L_s(\R^n) \) for some \(\alpha>0\) with \(2s+\alpha\) not an integer. Suppose that \(u\) is antisymmetric, non-negative in \(\R^n_+\), and \(s\)-harmonic in \(B_1^+\).

Then there exists \(C>0\) depending only on \(n\) and \(s\) such that \begin{align*}
\sup_{B_{1/2}^+} \frac{u(x)}{x_1 } &\leqslant C \inf_{B_{1/2}^+} \frac{u(x)}{x_1 }. 
\end{align*} Moreover, \(\inf_{B_{1/2}^+} \frac{u(x)}{x_1 }\) and \(\sup_{B_{1/2}^+} \frac{u(x)}{x_1 }\) are comparable to \(\Anorm{u}\). 
\end{thm}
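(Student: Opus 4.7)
The plan is to reduce everything to a single quantity, namely $\partial_1 u(0)$, by combining the Poisson representation formula in $B_1$ with the mean value formula in Proposition~\ref{cqGgE}.

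First I would show that the hypotheses force $u$ to be $s$-harmonic in all of $B_1$, not merely in $B_1^+$. Antisymmetry of $(-\Delta)^s u$ forces vanishing in $B_1^-$ as soon as it vanishes in $B_1^+$, while on the equator $B_1\cap\{x_1=0\}$ one has both $u(x)=0$ (by antisymmetry) and, after the change of variables $y\mapsto y_\ast$ in the portion of the principal-value integral over $\R^n_-$, $(-\Delta)^s u(x)=0$ as well. Since $u\in\mathscr L_s(\R^n)$, the Poisson representation formula (see~\cite[Section~15]{MR3916700}) then applies and, after folding the integral onto $\R^n_+\setminus B_1^+$ via antisymmetry, gives
\begin{align*}
u(x) = \gamma_{n,s}\int_{\R^n_+ \setminus B_1^+} \left(\frac{1-|x|^2}{|y|^2-1}\right)^s \left(\frac{1}{|x-y|^n} - \frac{1}{|x_\ast-y|^n}\right) u(y)\dd y
\end{align*}
for every $x\in B_1$. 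For $x\in B_{1/2}^+$ and $y\in\R^n_+\setminus B_1^+$, using $|x-y|\ge |y|/2$, $|x_\ast-y|\le 3|y|/2$, $(1-|x|^2)^s\in[(3/4)^s,1]$, together with the differentiation trick of \eqref{LxZU6}--\eqref{buKHzlE6} (see also the proof of \thref{KaMmndyO}), I would show the integrand is pinched, up to multiplicative constants depending only on $n$ and $s$, between constant multiples of $x_1y_1/\big((|y|^2-1)^s|y|^{n+2}\big)$. Since $u\ge 0$ in $\R^n_+$, integrating against $u$ then yields
\begin{align*}
C_1 \, I \;\leq\; \frac{u(x)}{x_1} \;\leq\; C_2 \, I, \qquad I := \int_{\R^n_+ \setminus B_1^+} \frac{y_1 u(y)}{(|y|^2-1)^s \, |y|^{n+2}} \dd y,
\end{align*}
uniformly in $x\in B_{1/2}^+$; this sandwich immediately gives the Harnack inequality on $B_{1/2}^+$, and Proposition~\ref{cqGgE} applied with $r=1$ identifies $I=\partial_1 u(0)/(2n\gamma_{n,s})$, so $u(x)/x_1$ is comparable to $\partial_1 u(0)$ on $B_{1/2}^+$.

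The final step is to compare $\partial_1 u(0)$ to $\Anorm{u}$, and here I would exploit that Proposition~\ref{cqGgE} is valid for every $r\in(0,1]$: rearranged, it reads
\begin{align*}
\partial_1 u(0)\, r^{-2s} = 2n\gamma_{n,s} \int_{\R^n_+ \setminus B_r^+} \frac{y_1 u(y)}{(|y|^2-r^2)^s\,|y|^{n+2}}\dd y.
\end{align*}
Multiplying by $r^{n+2s+1}$, integrating over $r\in(0,1)$, and swapping the order of integration by Tonelli (all integrands being non-negative) yields
\begin{align*}
\frac{\partial_1 u(0)}{n+2} = 2n\gamma_{n,s}\int_{\R^n_+} \frac{y_1 u(y)}{|y|^{n+2}}\,\Phi(|y|)\dd y,\qquad \Phi(\rho):=\int_0^{\min(1,\rho)}\frac{r^{n+2s+1}}{(\rho^2-r^2)^s}\dd r.
\end{align*}
The substitution $r=\rho t$ and elementary estimates (using $s<1$ for integrability of $(1-t^2)^{-s}$ on $[0,1]$) give $\Phi(\rho)=C_0\,\rho^{n+2}$ for $\rho\le 1$ and $\Phi(\rho)\sim \rho^{-2s}/(n+2s+2)$ as $\rho\to\infty$. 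Hence $\Phi(\rho)/\rho^{n+2}$ is comparable, uniformly in $\rho>0$, to the weight $1/(1+\rho^{n+2s+2})$ featuring in $\Anorm{\cdot}$, yielding $\partial_1 u(0)\approx \Anorm{u}$ and concluding the proof.

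The main obstacle is recognizing that the power $r^{n+2s+1}$ is the correct one to integrate the mean value formula against so that the resulting quantity reproduces, up to constants, the full $\mathscr A_s$-weight $y_1/(1+|y|^{n+2s+2})$; once this choice is identified, both the Harnack inequality and the comparability with $\Anorm{u}$ follow from routine kernel estimates together with the non-negativity of $u$ on $\R^n_+$.
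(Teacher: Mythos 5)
Your argument is correct and follows essentially the same route as the paper: the antisymmetric Poisson representation (Lemma~\ref{Gku6y}), the kernel pinching via~\eqref{LxZU6}--\eqref{buKHzlE6} to sandwich $u(x)/x_1$ between multiples of $\partial_1 u(0)$, and then integrating the mean value formula of Proposition~\ref{cqGgE} against $r^{n+1}\,dr$ to produce the $\mathscr A_s$-weight, which is exactly the content of Corollary~\ref{OUqJk}. The only substantive addition is that you explicitly justify extending $s$-harmonicity from $B_1^+$ to all of $B_1$ before invoking the Poisson formula, a step the paper leaves implicit.
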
 

Except for the statement  \(\inf_{B_{1/2}^+} \frac{u(x)}{x_1 }\) and \(\sup_{B_{1/2}^+} \frac{u(x)}{x_1 }\) are comparable to \(\Anorm{u}\), Theorem~\ref{oCuv7Zs2} was proven in \cite{ciraolo2021symmetry}. Both the proof presented here and the proof in \cite{ciraolo2021symmetry} rely on the Poisson kernel representation for \(s\)-harmonic functions in a ball. Despite this, our proof is entirely different to the proof in \cite{ciraolo2021symmetry}. This was necessary to prove that~\(\inf_{B_{1/2}^+} \frac{u(x)}{x_1 }\) and \(\sup_{B_{1/2}^+} \frac{u(x)}{x_1 }\) are comparable to \(\Anorm{u}\) which does not readily follow from the proof in \cite{ciraolo2021symmetry}. Our proof of Theorem~\ref{oCuv7Zs2} is a consequence of a new mean-value formula for antisymmetric \(s\)-harmonic functions (Proposition~\ref{cqGgE}) which we believe to be interesting in and of itself.

We first prove an alternate expression of the Poisson kernel representation formula for antisymmetric functions.

\begin{lem} \thlabel{Gku6y}
Let \(u\in C^{2s+\alpha}(B_1) \cap \mathscr L_s (\R^n)\) and \(r\in(0, 1] \).  If \(u\) is antisymmetric and \((-\Delta)^s u = 0 \) in~\(B_1\).

Then \begin{align}
u(x) & =  \gamma_{n,s} \int_{\R^n_+ \setminus B_r^+} \bigg ( \frac{r^2- \vert x \vert^2 }{\vert  y \vert^2 -r^2 } \bigg )^s \bigg ( \frac 1 {\vert x - y \vert^n }-\frac 1 {\vert x_\ast - y \vert^n }  \bigg ) u(y) \dd y \label{BPyAO}
\end{align} for all \(x\in B_1^+\).  Here~$\gamma_{n,s}$ is given in~\eqref{sry6yagamma098765}.
\end{lem}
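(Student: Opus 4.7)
\medskip

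\noindent\textbf{Proof plan for Lemma~\ref{Gku6y}.} The strategy is to start from the classical Poisson kernel representation for $s$-harmonic functions on a ball and then fold the integral from $\R^n$ down to $\R^n_+$ using antisymmetry. Since~$u$ is $s$-harmonic in~$B_1$, it is in particular $s$-harmonic in~$B_r$ for every~$r\in(0,1]$, and since~$u\in\mathscr{L}_s(\R^n)$ the Poisson representation applies to give, for every~$x\in B_r$,
\begin{equation*}
u(x) \;=\; \gamma_{n,s}\int_{\R^n\setminus B_r} \bigg(\frac{r^2-|x|^2}{|y|^2-r^2}\bigg)^{\!s} \frac{u(y)}{|x-y|^n}\,\dd y,
\end{equation*}
see~\cite[Section 15]{MR3916700}. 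This is the only input I would borrow; everything else is bookkeeping driven by symmetry.

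Next, I would split the domain as $\R^n\setminus B_r=(\R^n_+\setminus B_r^+)\cup(\R^n_-\setminus B_r^-)$ (the hyperplane~$\{y_1=0\}$ has zero measure and may be discarded) and perform the reflection change of variables~$y\mapsto y_\ast$ on the piece over~$\R^n_-\setminus B_r^-$. This reflection is an isometry, so the Jacobian is one and~$|y_\ast|=|y|$. Two identities then do all the work: antisymmetry gives $u(y_\ast)=-u(y)$, and a direct computation shows
\begin{equation*}
|x-y_\ast|^2 \;=\; (x_1+y_1)^2+|x'-y'|^2 \;=\; |x_\ast-y|^2,
\end{equation*}
so $|x-y_\ast|=|x_\ast-y|$ for every~$x,y\in\R^n_+$. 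Substituting these into the integral over~$\R^n_-\setminus B_r^-$ converts it to the negative of the integral over~$\R^n_+\setminus B_r^+$ with kernel~$|x_\ast-y|^{-n}$.

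Combining the two pieces, the factor $\big((r^2-|x|^2)/(|y|^2-r^2)\big)^s$ is unchanged (it depends only on~$|y|$) and the sum of the two $1/|x-y|^n$ contributions produces exactly $|x-y|^{-n}-|x_\ast-y|^{-n}$, which is formula~\eqref{BPyAO}. I would additionally remark that the representation makes sense pointwise because the~$\mathscr L_s$ decay of~$u$ together with the fact that the difference kernel~$|x-y|^{-n}-|x_\ast-y|^{-n}$ decays like~$|y|^{-n-2}$ at infinity (as in~\eqref{LxZU6}) guarantees absolute integrability; the assumption~$|x|<r$ (implicit in~$x\in B_r^+$) is what keeps~$(r^2-|x|^2)^s$ real and positive. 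There is no real obstacle here: the only substantive step is recognising the identity~$|x-y_\ast|=|x_\ast-y|$, after which the lemma reduces to a one-line symmetrisation of the classical Poisson formula.
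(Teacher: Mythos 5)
Your proposal is correct and follows essentially the same route as the paper: cite the classical Poisson representation formula, split the integral over $\R^n_+\setminus B_r^+$ and $\R^n_-\setminus B_r^-$, reflect $y\mapsto y_\ast$ in the second piece, and use antisymmetry together with the identity $|x-y_\ast|=|x_\ast-y|$ to fold the two contributions into the difference kernel. Your added remarks on integrability and on the implicit restriction $|x|<r$ are sensible but not needed beyond what the paper already does.
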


We remark that
since \(y \mapsto  \frac 1 {\vert x - y \vert^n }-\frac 1 {\vert x_\ast - y \vert^n }  \) is antisymmetric for each \(x\in \R^n_+\), we can rewrite~\eqref{BPyAO} as \begin{align} \label{ZrGnZFNg}
u(x) & = \frac 1 2  \gamma_{n,s} \int_{\R^n \setminus B_r} \bigg ( \frac{r^2- \vert x \vert^2 }{\vert  y \vert^2 -r^2 } \bigg )^s \bigg ( \frac 1 {\vert x - y \vert^n }-\frac 1 {\vert x_\ast - y \vert^n }  \bigg ) u(y) \dd y.
\end{align}

\begin{proof}[Proof of Lemma~\ref{Gku6y}]
The Poisson representation formula \cite{aintegrales} gives\begin{align}
u(x) & =\gamma_{n,s} \int_{\R^n \setminus B_r} \bigg ( \frac{r^2- \vert x \vert^2 }{\vert  y \vert^2 -r^2 } \bigg )^s \frac {u(y)} {\vert x - y \vert^n }\dd y \label{vDlwi}
\end{align} where \(\gamma_{n,s}= \frac{\sin(\pi s)\Gamma (n/2)}{\pi^{\frac n 2 +1 }}\); see also \cite[p.112,p.122]{MR0350027} and \cite[Section 15]{MR3916700}. Splitting the integral in \eqref{vDlwi} into two separate integrals over \(\R^n_+ \setminus B_r^+\) and \(\R^n_- \setminus B_r^-\) respectively, then making the change of variables \(y \to y_\ast\) in the integral over \(\R^n_- \setminus B_r^-\) and using that \(u\) is antisymmetric, we obtain \begin{align*}
u(x) & =\gamma_{n,s} \int_{\R^n_+ \setminus B_r^+} \bigg ( \frac{r^2- \vert x \vert^2 }{\vert  y \vert^2 -r^2 } \bigg )^s \bigg ( \frac 1 {\vert x - y \vert^n }-\frac 1 {\vert x_\ast - y \vert^n }  \bigg ) u(y) \dd y .  \qedhere
\end{align*} 
\end{proof}

Now that we have proven the Poisson kernel formula for antisymmetric functions in Lemma~\ref{Gku6y},
we now establish Proposition~\ref{cqGgE}.

\begin{proof}[Proof of Proposition~\ref{cqGgE}]
Let \(h\in(0,1)\). It follows from Lemma~\ref{Gku6y} that\begin{align}
\frac{u(he_1)} h  & = \frac 1 {h} \gamma_{n,s} \int_{\R^n_+ \setminus B_r^+} \bigg ( \frac{r^2- h^2 }{\vert  y \vert^2 -r^2 } \bigg )^s \bigg ( \frac 1 {\vert he_1 - y \vert^n }-\frac 1 {\vert he_1+ y \vert^n }  \bigg ) u(y) \dd y  \label{CNZLU}
\end{align} for all \(r>h\). Taking a Taylor series in \(h\) about \(0\), we have the pointwise limit \begin{align*}
\lim_{h \to 0 } \frac 1 h \bigg ( \frac 1 {\vert he_1 - y \vert^n }-\frac 1 {\vert he_1+ y \vert^n }  \bigg ) &= \frac{2ny_1 }{\vert y \vert^{n+2}}.
\end{align*} Moreover, by a similar argument to~\eqref{LxZU6},\begin{align*}
\bigg \vert  \frac 1 {(\vert  y \vert^2 -r^2)^s }  \bigg ( \frac 1 {\vert x - y \vert^n }-\frac 1 {\vert x_\ast - y \vert^n }  \bigg )\bigg \vert  &\leqslant   \frac{2n  h \vert  y_1 \vert }{(\vert  y \vert^2 -r^2)^s\vert he_1 - y \vert^{n+2}} \in L^1(\R^n \setminus B_r).
\end{align*} Thus, we obtain the result by taking the limit \(h\to 0\) in \eqref{CNZLU} and applying the Dominated Convergence Theorem to justify swapping the limit and the integral on the right-hand side. 
\end{proof}

{F}rom Proposition~\ref{cqGgE}, we obtain the following corollary. 

\begin{cor} \thlabel{OUqJk}
Let \(u\in C^{2s+\alpha}(B_1) \cap \mathscr L_s(\R^n)\) with \(\alpha>0\) and \(2s+\alpha\) not an integer. Suppose that \(u\) is antisymmetric and \((-\Delta)^s u = 0 \) in \(B_1\).

Then there exists a radially symmetric function \(\psi_s \in C(\R^n) \) satisfying \begin{align}
\frac{C^{-1}}{1+\vert y \vert^{n+2s+2}} \leqslant \psi_s (y) \leqslant \frac{C}{1+\vert y \vert^{n+2s+2}} \qquad \text{for all } y \in \R^n, \label{nhvr2}
\end{align}
for some~$C>1$, such that \begin{align*}
\frac{\partial u}{\partial x_1} (0) &=  \int_{\R^n} y_1\psi_s (y) u(y) \dd y.
\end{align*} In particular, if \(u\) is non-negative in \(\R^n_+\) then \[C^{-1} \Anorm{u} \leqslant \frac{\partial u}{\partial x_1} (0)  \leqslant C \Anorm{u}.\] 
\end{cor}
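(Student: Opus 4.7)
The strategy is to apply Proposition~\ref{cqGgE} for a range of values of $r \in (0,1]$ and take an appropriately weighted average. Specifically, I would multiply both sides of the identity from Proposition~\ref{cqGgE} by $r^{n+1}$ and integrate $r$ over $(0,1)$. The left-hand side becomes $\tfrac{1}{n+2}\partial_1 u(0)$, while Fubini--Tonelli applied to the right-hand side yields
\[
\frac{\partial u}{\partial x_1}(0) = \int_{\R^n_+} y_1 u(y)\,\tilde\psi_s(y)\,dy, \qquad \tilde\psi_s(y):=\frac{2n(n+2)\gamma_{n,s}}{|y|^{n+2}}\int_0^{\min\{1,|y|\}} \frac{r^{n+1+2s}}{(|y|^2-r^2)^s}\,dr.
\]
Since $u$ is antisymmetric and $\tilde\psi_s$ is radial, the integrand $y_1 u(y)\tilde\psi_s(y)$ is invariant under $y\mapsto y_\ast$, so $\int_{\R^n_+}=\tfrac12\int_{\R^n}$; setting $\psi_s := \tilde\psi_s/2$ then gives the desired integral representation.

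The crucial step is verifying the two-sided bound~\eqref{nhvr2} together with the continuity of $\psi_s$. For $|y| \leq 1$, the substitution $r = |y|t$ converts the defining integral into
\[
\psi_s(y) = n(n+2)\gamma_{n,s}\int_0^1 \frac{t^{n+1+2s}}{(1-t^2)^s}\,dt =: A,
\]
a finite positive constant (finite because $s<1$), so $\psi_s \equiv A$ on $B_1$ and is in particular comparable there to $1/(1+|y|^{n+2s+2})$. For $|y|>1$, dominated convergence yields continuity of $\psi_s$ in $|y|$, and a direct check shows the limit as $|y|\to 1^+$ recovers $A$, so $\psi_s \in C(\R^n)$. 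For $|y|\geq \sqrt{2}$ the estimate $(|y|^2-r^2)^s \asymp |y|^{2s}$, uniformly in $r\in[0,1]$, gives $\psi_s(y) \asymp |y|^{-n-2s-2}$; the intermediate annulus $1\leq|y|\leq\sqrt{2}$ is handled by the continuity and strict positivity of $\psi_s$ on this compact set. These bounds retroactively justify the Fubini swap: combined with $u\in\mathscr L_s(\R^n)$, which yields $\Anorm{u}\leq C\|u\|_{\mathscr L_s(\R^n)}<\infty$, they ensure absolute integrability of the iterated integral. Finally, when $u\geq 0$ in $\R^n_+$, antisymmetry forces $y_1 u(y) \geq 0$ on \emph{all} of $\R^n$, so the two-sided bound on $\psi_s$ translates directly into the asserted comparison between $\partial_1 u(0)$ and $\Anorm{u}$.

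The main obstacle is identifying the correct weight $r^{n+1}$ for the averaging: picking a Dirac mass at a single $r$ leaves the kernel undefined on a ball, while a uniform weight produces a kernel that blows up at the origin like $|y|^{-n-1}$. The exponent $n+1$ is the unique value for which the scaling $r=|y|t$ exactly cancels the $|y|^{-(n+2)}$ prefactor, producing a \emph{bounded} (in fact constant) radial kernel on $B_1$ while preserving the correct decay rate $|y|^{-n-2s-2}$ at infinity.
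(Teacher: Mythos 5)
Your proposal is correct and follows essentially the same route as the paper: you average the mean-value identity of Proposition~\ref{cqGgE} against the weight $r^{n+1}$ over $r\in(0,1)$, swap the order of integration via Fubini (justified by the comparison of the resulting kernel with $(1+|y|^{n+2s+2})^{-1}$ and the finiteness of $\Anorm{u}$), and your kernel $\tilde\psi_s/2$ coincides with the paper's $\psi_s$ after the substitution $r=|y|\tilde r$. The verification of the two-sided bound, the continuity across $|y|=1$, and the final sign argument using $y_1u(y)\geqslant 0$ on all of $\R^n$ all match the paper's proof.
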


\begin{proof}
Let \begin{align*}
\psi_s (y) &:=n (n+2)\gamma_{n,s} \int_0^{\min \{ 1/\vert y \vert,1\}} \frac{r^{2s+n+1}}{(1 - r^2)^s}\dd  r. 
\end{align*} It is clear that \(\psi_s\in C(\R^n)\) and that there exists \(C>1\) such that~\eqref{nhvr2} holds. Since \(u\in \mathscr L_s(\R^n)\), we have that~\( \Anorm{u} <+\infty\), so it follows that \begin{align}
\bigg \vert \int_{\R^n}y_1 \psi_s (y) u(y) \dd y \bigg \vert \leqslant C \Anorm{u} < +\infty. \label{9B6Vm}
\end{align} If we multiply \(\partial_1 u (0)\) by \(r^{n+1}\) then integrate from 0 to 1, Proposition~\ref{cqGgE} and~\eqref{ZrGnZFNg} give that
\begin{align}
\frac 1 {n+2} \frac{\partial u}{\partial x_1} (0) = \int_0^1 r^{n+1} \frac{\partial u}{\partial x_1} (0) \dd r = n \gamma_{n,s}\int_0^1 \int_{\R^n \setminus B_r} \frac{r^{2s+n+1}y_1 u(y)}{(\vert y \vert^2 - r^2)^s\vert y \vert^{n+2}} \dd y \dd r .\label{YYJ5o}
\end{align} At this point, let us observe that if we formally swap the integrals in~\eqref{YYJ5o} and then make the change of variables \(r=\vert y \vert \tilde r\), we obtain \begin{align}
 &n \gamma_{n,s}  \int_{\R^n \setminus B_1} \int_0^1 \frac{r^{2s+n+1}y_1 u(y)}{(\vert y \vert^2 - r^2)^s\vert y \vert^{n+2}}\dd r \dd y +  n \gamma_{n,s}\int_{B_1} \int_0^{\vert  y \vert}  \frac{r^{2s+n+1}y_1 u(y)}{(\vert y \vert^2 - r^2)^s\vert y \vert^{n+2}}\dd r \dd y \nonumber\\
&= n \gamma_{n,s}  \int_{\R^n \setminus B_1} \bigg ( \int_0^{1/\vert y \vert} \frac{\tilde r^{2s+n+1}}{(1 -\tilde r^2)^s}\dd \tilde r \bigg ) y_1 u(y)\dd y +  n \gamma_{n,s}\int_{B_1}\bigg ( \int_0^1   \frac{\tilde r^{2s+n+1}}{(1 - \tilde r^2)^s}\dd \tilde r\bigg )y_1 u(y) \dd y \nonumber \\
&= \frac 1 {n+2} \int_{\R^n}y_1 \psi_s(y) u(y) \dd y .  \label{LeVoi}
\end{align} By~\eqref{9B6Vm}, equation~\eqref{LeVoi} is finite. Hence, Fubini's theorem justifies changing the order of integration in \eqref{YYJ5o} and that the right-hand side of~\eqref{YYJ5o} is equal to~\eqref{LeVoi} which proves the result. 
\end{proof}

At this point, we can give the proof of Theorem~\ref{oCuv7Zs2}. 

\begin{proof}[Proof of Theorem~\ref{oCuv7Zs2}]
We will begin by proving that \begin{align}
u(x)  \geqslant C x_1 \Anorm{u}  \qquad \text{for all } x\in B_{1/2}^+. \label{p6oBI}
\end{align}
To this end, we observe that since \(\vert x_\ast - y \vert^{n+2} \leqslant C \vert y \vert^{n+2} \) for all \(x \in B_1\) and~\(y\in \R^n\setminus B_1\),~\eqref{buKHzlE6} gives that \begin{align*}
\frac 1 {\vert x - y \vert^n }-\frac 1 {\vert x_\ast - y \vert^n } \geqslant C \frac{  x_1 y_1}{\vert y \vert^{n+2}} \qquad \text{for all } x\in B_1^+ {\mbox{ and }} y\in \R^n_+ \setminus B_1^+.
\end{align*}Hence, by Lemma~\ref{Gku6y}, for all~$x\in B_{1/2}^+$,  \begin{align*}
u(x) &= C\int_{\R^n \setminus B_1} \bigg ( \frac{1- \vert x \vert^2 }{\vert  y \vert^2 -1 } \bigg )^s \bigg ( \frac 1 {\vert x - y \vert^n }-\frac 1 {\vert x_\ast - y \vert^n }  \bigg ) u(y) \dd y  \\
&\geqslant Cx_1 \int_{\R^n \setminus B_1} \frac{ y_1 u(y)}{\big ( \vert  y \vert^2 -1 \big )^s\vert y \vert^{n+2}}    \dd y 
\end{align*} where we used that \((-y_1)u(y_\ast)=y_1u(y)\) and that \(u\geqslant 0\) in \(\R^n_+\). Then
Proposition~\ref{cqGgE} with \(r=1\) gives that \begin{align*}
u(x) &\geqslant Cx_1 \frac{\partial u}{\partial x_1}(0)  \qquad \text{for all } x\in B_{1/2}^+.
\end{align*} Finally, Corollary~\ref{OUqJk} gives~\eqref{p6oBI}. 

Next, we will prove that  \begin{align}
u(x)  \leqslant C x_1 \Anorm{u}  \qquad \text{for all } x\in B_{1/2}^+. \label{3abqf}
\end{align} Similar to above, for all \(x\in B_{1/2}^+\) and~\(y\in  \R^n_+ \setminus B_1^+\), we have that~\(\vert x- y \vert \geqslant \frac 1 2 \vert y \vert\), so~\eqref{LxZU6} gives that\begin{align*}
 \frac 1 {\vert x - y \vert^n }-\frac 1 {\vert x_\ast - y \vert^n }  \leqslant \frac{C  x_1 y_1}{\vert y \vert^{n+2}} \qquad \text{for all } x\in B_{1/2}^+ {\mbox{ and }} y\in  \R^n_+ \setminus B_1^+.
\end{align*} As before, using Lemma~\ref{Gku6y}, we have that, for all~$x\in B_{1/2}^+$,\begin{align*}
u(x) &= C\int_{\R^n \setminus B_1} \bigg ( \frac{1- \vert x \vert^2 }{\vert  y \vert^2 -1 } \bigg )^s \bigg ( \frac 1 {\vert x - y \vert^n }-\frac 1 {\vert x_\ast - y \vert^n }  \bigg ) u(y) \dd y  \\
&\leqslant C x_1 \int_{\R^n \setminus B_1} \frac{ y_1 u(y)}{\big ( \vert  y \vert^2 -1 \big )^s\vert y \vert^{n+2}}    \dd y.
\end{align*}Then Proposition~\ref{cqGgE} and Corollary~\ref{OUqJk} give that \begin{align*}
u(x) &\leqslant Cx_1 \frac{\partial u}{\partial x_1}(0) \leqslant C x_1 \Anorm{u}  \qquad \text{for all } x\in B_{1/2}^+,
\end{align*} which is~\eqref{3abqf}.

{F}rom~\eqref{p6oBI} and~\eqref{3abqf} the result follows easily. 
\end{proof}

\section{A proof of~\eqref{LA:PAKSM} when~$c:=0$
that relies on extension methods}\label{APPEEXT:1}

We consider the extended variables~$X:=(x,y)\in\R^n\times\R$.
Then, a solution~$u$ of~$(-\Delta)^su=0$ in~$\Omega^+$ can be seen as the trace along~$\Omega^+\times\{0\}$ of its $a$-harmonic extension~$U=U(x,y)$ satisfying
$$ {\rm div}_{\!X}\big(|y|^a\nabla_{\!X} U\big)=0\quad{\mbox{ in }}\,\R^{n+1},$$
where~$a:=1-2s$, see Lemma~4.1 in~\cite{MR2354493}.

We observe that the function~$V(x,y):=x_1$ is also a solution of the above equation.
Also, if~$u$ is antisymmetric, then so is~$U$, and consequently~$U=V=0$ on~$\{x_1=0\}$.

As a result, by the boundary Harnack inequality (see~\cite{MR730093}),
\begin{equation}\label{LA:PAKSM:2}
\sup_{\tilde \Omega^+\times(0,1)} \frac{U}{V}  \leqslant C \inf_{ \tilde \Omega^+\times(0,1)} \frac{U}{V} .
\end{equation}

In addition,
$$ \sup_{\tilde \Omega^+\times(0,1)} \frac{U}{V}\ge
\sup_{\tilde \Omega^+\times\{0\}} \frac{U}{V}
=\sup_{\tilde \Omega^+} \frac{u(x)}{x_1}$$
and similarly
$$ \inf_{\tilde \Omega^+\times(0,1)} \frac{U}{V}\le\inf_{\tilde \Omega^+} \frac{u(x)}{x_1}.$$
{F}rom these observations and~\eqref{LA:PAKSM:2} we obtain~\eqref{LA:PAKSM}
in this case.

\section*{Acknowledgements}

All the authors are members of AustMS. 
SD is supported by
the Australian Research Council DECRA DE180100957
``PDEs, free boundaries and applications''.
JT is supported by an Australian Government Research Training Program Scholarship. 
EV is supported by the Australian Laureate Fellowship
FL190100081
``Minimal surfaces, free boundaries and partial differential equations''.

JT would also like to thank David Perrella for his interesting and fruitful conversations. 

\printbibliography

 \vfill
\end{document}